\newtheorem{theorem}{Theorem}[section]
\newtheorem{corollary}[theorem]{Corollary}
\newtheorem{lemma}[theorem]{Lemma}
\newtheorem{proposition}[theorem]{Proposition}
\newtheorem{remark}[theorem]{Remark}
\numberwithin{equation}{section}
\DeclareMathOperator*{\id}{id}
\DeclareMathOperator*{\End}{End}
\DeclareMathOperator*{\Hom}{{Hom}}
\DeclareMathOperator*{\spn}{{Span}}
\def\sl{\mathfrak{sl}}
\def\ra{\rightarrow}
\def\a{\alpha}
\def\k{\kappa}
\def\e{\epsilon}
\def\hatW{{\widehat W}}
\def\tildeW{{\widetilde W}}
\def\C{\mathbb{C}}
\def\Z{\mathbb{Z}}
\def\N{\mathbb{N}}
\def\qchoose#1#2{\left[\begin{array}{c} #1\\ #2\end{array}\right]}
\def\ideal{\unlhd}
\begin{document}
%CCM
\title{Whittaker modules for $U_q(\mathfrak{sl}_3)$}
\author{Xiangqian Guo, Xuewen Liu and Limeng Xia}
\address{Xiangqian Guo, School of Mathematics and Statistics, Zhengzhou University, Zhengzhou 450001, Henan, P. R. China}
\email{guoxq@zzu.edu.cn}
\address{Xuewen Liu, School of Mathematics and Statistics, Zhengzhou University, Zhengzhou 450001, Henan, P. R. China}
\email{liuxw@zzu.edu.cn}
\address{Limeng Xia, Institute of Applied System Analysis, Jiangsu University, Zhenjiang  212013, Jiangsu, P. R. China}
\email{xialimeng@ujs.edu.cn}
%\address{Jian-Rong Li, School of Mathematics and Statistics, Lanzhou University, Lanzhou 730000, P. R. China}
%\email{lijr07@gmail.com, lijr@lzu.edu.cn}
\date{}

\maketitle

\begin{abstract}
In this paper, we study the Whittaker modules for the quantum enveloping algebra $U_q(\sl_3)$ with respect to a fixed Whittaker function. 
We construct the universal Whittaker module, find all its Whittaker vectors and investigate the submodules generated by subsets of Whittaker vectors and corresponding quotient modules. We also find Whittaker vectors and determine the irreducibility of these quotient modules and show that they exhaust all irreducible Whittaker modules. Finally, we can determine all maximal submodules of the universal Whittaker module.
%Our results are quite different from the results for Whittaker modules 
The Whittaker model of $U_q(\sl_3)$ are quite different from that of $U_q(\sl_2)$ and finite-dimensional simple Lie algebras, since the center of our algebra is not a polynomial algebra. 
\end{abstract}

{\small Keywords: Whittaker module, Whittaker function, quantum group.}

{\small MSC 2010: 17B37, 20G42.}

\section{Introduction}

Whittaker modules were first introduced and studied by B. Kostant [Ko] for finite-dimensional
complex semisimple Lie algebras. These modules are related to the Whittaker equations 
(see \cite{WW}, P.337) that arise in the study of the corresponding representations of the associated Lie groups. 
Whittaker modules for finite-dimensional simple complex Lie algebras are also related
to parabolic induction and BGG category $\mathcal{O}$ (see for example, \cite{Ba, KM, MD1, MD2, MS1, MS2}).
%
%At the same time, modules
%which were constructed earlier in [AP] were examples of these
%Whittaker modules. 
In Block's classification of all simple $\sl_2$-modules in [Bl],
all simple modules fall into two classes: weight modules (including highest and lowest weight modules) and modules on which the action of the Cartan subalgebra is torsion-free (including Whittaker modules). 
%Simple Whittaker modules from [AP] were first known
%examples of modules from the latter class. Full details on the
%classification can be found in the recent book [Ma3].
%In [Bl], Block showed that simple modules for
%$sl_2$ consist of highest (lowest) weight modules, Whittaker
%modules, and another family of non-weight modules.
%
%theory has been well developed for the finite-dimensional complex
%semisimple Lie algebras [AP, Ba, Bo, Ko, MS][AP, Ba, Ko, KM, MD1,
%MD2, MS1, MS2].

Recently, many authors studied the Whittaker modules for other algebras, for example, 
quantum groups (\cite{On1, On2, Se}), generalized Weyl algebras (\cite{BO}), affine Lie algebras (\cite{Ch, ALZ}), Virasoro algebra (\cite{OW, LGZ}), and some other algebras (see for example, \cite{OW2, BM, GL} and references therein.)
%[LW, Ta, TZ, Wa, WZ, GL1, GL2, GL3, CJ]).
%some infinite-dimensional Lie algebras ([On1,
%On2, Ta, BO, Ch, OW, TZ, LW, Wa, LWZ, WZ]).

In particular, Whittaker modules for the quantum group $U_q(\sl_2)$ were fully investigated in \cite{On1}, where Whittaker 
modules with respect to a fixed Whittaker function are in one-to-one correspondence with the ideals of the center of $U_q(\sl_2)$. This is an analogue of the classical results in the Lie algebra setting due to Kostant \cite{Ko}. 

In present paper, we will consider the Whittaker model for $U_q(\sl_3)$. 
The main difference is that there does not exist non-singular Whittaker function for $U_q(\sl_3)$ duo to the quantum Serre relations ((see Section 3)). So we consider the Whittaker modules with respect to a singular Whittaker function. In this case, the Whittaker module theory of $U_q(\sl_3)$ turns out to be much different, since the algebra itself is more complicated as well as the center of $U_q(\sl_3)$ is no longer a polynomial algebra, which cause much trouble for our discussion.

The paper is organized as follows. In Section 2, we present some basic notation and terminology for the 
quantum group $U_q(\sl_3)$. Then we start with the construction of universal Whittaker module $M(\eta)$
for some Whittaker function $\eta$ and make some preparations for later use in Section 3. 

In Section 4, we define certain submodules generated by subsets of Whittaker vectors in $M(\eta)$ and prove some technique results for them, which lead to two consequences. The first is that we can determine the Whittaker vectors in $M(\eta)$ and its quotient modules corresponding to the previous submodules (Theorem \ref{vector}). 
The second one allows us to divide submodules and quotient modules of $M(\eta)$ into two classes: 
critical and non-critical ones (see definitions above Theorem \ref{vector}). 
In particular, we construct quotient modules $V(\eta; \k,c)$ with two parameters $\k,c\in\C, \k\neq0$, 
which we show is irreducible if and only if $(\k,c)$ is non-critical, see Theorem \ref{non-critical}.

The last section is devoted to the critical case, where we completely determine the submodule structure of $V(\eta;\k,c)$, which admits a unique composite series of length at most $2$ (Theorem \ref{critical}). 
In particular, the homomorphisms between two modules $V(\eta;\k,c)$ are computed (Corollary \ref{EndV}) and 
the irreducible quotient modules are determined.  
Finally, we can give an explicit description of all maximal submodules of the universal Whittaker module $M(\eta)$ and obtain a characterization of irreducible Whittaker modules in terms of quotient modules of $M(\eta)$.

%Stimulated by these works, P. Batra and V. Mazorchuk [BM] defined a
%Whittaker pair $(L,L_+)$ as any Lie algebra $L$ and a quasi
%nilpotent subalgebra $L_+$ such that the adjoint action of $L_+$ on
%$L/L_+$ is locally nilpotent. This theory of Whittaker modules
%applies to a lot of Lie algebras, including Lie algebras with
%triangular decomposition and simple Lie algebras of Cartan type and
%so on. However, there are also many examples that are excluded.

\section{The algebra $U_q(\mathfrak{sl}_3)$}

Throughout this paper, we denote by $\Z$, $\N$, $\Z_+$ and $\C$ the
sets of integers, positive integers, nonnegative integers and
complex numbers respectively. All vector spaces and algebras are
over $\C$, although most of the arguments work for any fields with characteristic $0$.
%(Theorem \ref{simple} depends on the nature of being algebraically closed). 
We always fix a nonzero $q\in\C$ which is not a root of unity.

First recall the Jimbo presentation of $U_q(\mathfrak{sl}_3)$ (see \cite{Jan}). The
algebra is an associative algebra generated by $E_1, E_2, F_1, F_2, K_1, K_2, K_1^{-1}, K_2^{-1}$ 
subject to the following relations. 
For $i, j \in \{1, 2\}$, the commutator relations:
\begin{align}\label{comm}
& [E_i, F_j] = \delta_{ij} \frac{K_i - K_i^{-1}}{q-q^{-1}}, \\
& K_i^{\pm 1} E_j = q^{\pm a_{ij}} E_j K_i^{\pm 1}, \\
& K_i^{\pm 1} F_j = q^{\mp a_{ij}} F_j K_i^{\pm 1};
\end{align} and the quantum Serre relations:
\begin{align}\label{serre}
& E_i^2 E_j - (q+q^{-1}) E_i E_j E_i + E_j E_i^2, \quad |i-j|=1, \\
& F_i^2 F_j - (q+q^{-1}) F_i F_j F_i + F_j F_i^2, \quad |i-j|=1,
\end{align}
where $a_{11}=a_{22}=2$ and $a_{12}=a_{21}=-1$.

%We also write $K_{\pm i} = K_{i}^{\pm 1}$, $i,j \in \{1, 2\}$.

The algebra $U=U_q(\sl_3)$ admits a triangular decomposition
$$U=U^-\otimes U^0\otimes U^+,$$
where $U^{+}$ is the subalgebra generated by $E_1, E_2$, $U^{-}$ is
the subalgebra generated by $F_1, F_2$ and $U^{0}=\C[K_1^{\pm1}, K_2^{\pm 1}]$.
For convenience, we denote $E_{3}=E_1E_2-q^{-1}E_2E_1$
and $F_{3}=F_1F_2-qF_2F_1$. Then $U^+$ admits a PBW basis
$$\{E_{3}^{r_{3}}E_2^{r_2}E_1^{r_1}, r_1, r_2, r_{12}\in\Z_+\},$$
and $U^-$ admits a PBW basis
$$\{F_{3}^{r_{3}}F_2^{r_2}F_1^{r_1}, r_1, r_2, r_{12}\in\Z_+\}.$$

%See \cite{Jan} for details. 
For $E_3, F_3$, we have the following easily-verified commutator relations:
\begin{align}\label{E_3 F_3}
 & K_iE_3=qE_3K_i, \ K_iF_3=q^{-1}F_3K_i; \quad i=1,2 \\
 & F_1F_3=q^{-1}F_3F_1, \ F_2F_3=qF_3F_2, \ [E_1, F_3]=F_2K_1^{-1}, \ [E_2, F_3]=-K_2F_1;\\
 & E_1E_3=qE_3E_1, \ E_2E_3=q^{-1}E_3E_2, \ [F_1, E_3]=E_2K_1^{-1}, \ [F_2, E_3]=-K_2E_1.
\end{align}

\section{Whittaker modules for $U_q(\mathfrak{sl}_3)$}

For convenience we always denote $U=U_q(\sl_3)$ throughout the paper.
% and $U^{(i)}$ be the
%subalgebra generated by $E_i, F_i$ and $K_i$ for $i=1,2$. 
%For any elements $x,y\in U$, we denote their commutator $[x,y]=xy-yx$
%and for any subset $S, T\subseteq U$, denote by $[S,T]$ the subspace spanned by 
%elements $[x,y], x\in S, y\in T$.
%
%
%The
%algebra $U$ has a triangular decomposition, that is, $U=U^-\otimes
%U^0\otimes U^+,$ where $U^+$ is the subalgebra generated by $E_1,
%E_2$, $U^-$ generated by $F_1, F_2$, and $U^0=\C[K_1^{\pm1},
%K_2^{\pm1}]$. 

Fix any nonzero algebra homomorphism $\eta: U^+\ra \C$, which we call a
\textbf{Whittaker function}. %It is obvious that $\eta([U^+,U^+])=0$.
 In particular, $\eta(E_{3})=(1-q^{-1})\eta(E_1)\eta(E_2)$
and hence $\eta$ is determined by its values on $E_1$ and $E_2$. The
Whittaker function $\eta$ is called \textbf{non-singular} if
$\eta(E_1)$ and $\eta(E_2)$ are both nonzero and is called
\textbf{singular} otherwise. From the quantum Serre relations
\eqref{serre}, we see easily that $\eta(E_1)\eta(E_2)=0$.
Consequently, there do not exist non-singular Whittaker functions
and we will consider the singular case. \textbf{In the rest of this
paper, we will always suppose that $\eta(E_1)=\a\neq0$ and
$\eta(E_2)=0$} without loss of generality due to the symmetry
between $E_1$ and $E_2$.

Now take any $U$-module $W$. A vector $w\in W$ is called a
\textbf{Whittaker vector of type $\eta$} if $xw=\eta(x)v$ for all
$x\in U^+$ and $W$ is called a \textbf{Whittaker module of type
$\eta$ or $(\a,0)$} if it is generated by a nonzero Whittaker vector of the same
type. In what follows, all Whittaker modules and Whittaker vectors
are assumed of type $\eta$ if not stated otherwise.

We first define the universal Whittaker module. Define a
$1$-dimensional $U^+$-module $\C v_\eta$ given by $xv_\eta=\eta(x)v_\eta$ for all
elements $x\in U^+$. Then we form the induced $U$-module
$$M(\eta)=U\otimes_{U^+}\C v_\eta,$$
which is called the \textbf{universal Whittaker module of type
$\eta$}. Obviously, any Whittaker module is isomorphic
to a homomorphic image of the universal Whittaker module.

%Now suppose that $W$ is a Whittaker module generated by a Whittaker vector $v_\eta$. 
%%Then $W^{(1)}=U^{(1)}v_\eta$ is a Whittaker module for the
%%algebra $U^{(1)}$ of type $\eta^{(1)}: U^{(1),+}\ra \C$, where
%%$U^{(1),+}=U^+\cap U^{(1)}$ is the positive part of $U^{(1)}$ and $\eta^{(1)}=\eta\big|_{U^{(1),+}}$. 
%By the PBW Basis Theorem, $W^{(1)}$ is spanned by the vectors $F_1^{r}K_1^{s}v_\eta, r\in\Z_+,
%s\in\Z$. From the commutator relations \eqref{comm}, we see
%easily that
%$$E_2(F_1^{r_1}K_1^{r_0}v_\eta)=q^{r_0}F_1^{r_1}K_1^{r_0}E_2v_\eta=0, \ \forall\ r_0,r_1\in\Z_+.$$
%That is $E_2W^{(1)}=0$. Similarly, $E_{3}W^{(1)}=0$.

%Applying the previous arguments to the universal Whittaker module
%$M(\eta)$, we see that $E_2M^{(1)}(\eta^{(1)})=0$ and
%$E_{3}M^{(1)}(\eta^{(1)})=0$, where
%$$M^{(1)}(\eta^{(1)})=U^{(1)}\otimes_{U^{(1),+}}\C v_\eta$$ is the
%universal $U^{(1)}$-Whittaker module of type $\eta^{(1)}$. This
%makes $M^{(1)}(\eta^{(1)})$ into a $(U^{(1)}+U^+)$-module. Moreover,
%the universal Whittaker module $M(\eta)$ can be reformulated as
%$$M(\eta)=U\otimes_{U^{(1)}+U^+}M^{(1)}(\eta^{(1)}).$$
%Denote $M=M(\eta)$ and $M^{(1)}=M^{(1)}(\eta^{(1)})$ for short. As a
%vector space, we have
%$$M=\bigoplus_{l\in\Z; j,k\in\Z_+}F_{2}^{j} F_3^{k}K_2^{l}M^{(1)}.$$

Let $U^{(1)}$ be the subalgebra of $U$ generated by $E_1,K_1,F_1$.
Denote $K=K_1K_2^2$. It is clear that $K$ commutes with $E_1, F_1$
and $K_1$. Recall the Casimir element of $U^{(1)}$ 
$$C_1=F_1E_1+\frac{qK_1+q^{-1}K_1^{-1}}{(q-q^{-1})^2}.$$
Since $[C_1, K]=0$, we have the polynomial algebra $\C[K^{\pm1},
C_1]$ and $[U^{(1)}, \C[K^{\pm1}, C_1]]=0$.

Note that $E_1w=\a w$ and $E_2w=0$ for any $w\in\C[K^{\pm1}, C_1]v_\eta$, 
so $w$ is a Whittaker vector. 
In next section, we will show that $\C[K^{\pm1}, C_1]v_\eta$ is just the set of all Whittaker vectors
(see Theorem \ref{vector}).
%$E_2K^rv_\eta=q^{-3r}KE_2v_\eta=0$ and $E_2C_1^rv_\eta=0$ for all $r\in\Z_+$.

By the PBW Theorem, $M(\eta)$ has a basis
$$\{F_{2}^{j}F_3^{k}K_2^{l}K_1^pF_1^{r}v_\eta\ |\ l, p\in\Z, j,k, r\in\Z_+\}.$$
Noticing $E_1v_\eta=\a v_\eta$ and the definition of 
$C_1$ and $K$, the module $M(\eta)$ has another basis
$$\{F_{2}^{j}F_3^{k}K_2^{l}K^{p}C_1^{r}v_\eta\ |\ l, p\in\Z, j,k, r\in\Z_+\}$$

Denote $\Gamma=\Z_+^2\oplus\Z$ and define a total order on $\Gamma$
as follows: for any $\gamma=(j,k,l)$, $\gamma'=(j',k',l')\in
\Gamma$, define $\gamma\geq \gamma'$ if and only if $j+k>j'+k'$, or $j+k=j'+k'$
and $k>k'$, or $j+k=j'+k', k=k'$ and $l\succ l'$, where $l\succeq l'$ means $|l|>|l'|$ or $l=-l'\geq0$.
We have the induced filtration on $M=M(\eta)$ given by:
$$M_{j,k,l}=\sum_{(j',k',l')\leq(j,k,l)}F_{2}^{j'}F_3^{k'}K_2^{l'}\C[K^{\pm1}, C_1]v_\eta$$ and
$$M'_{j,k,l}=\sum_{(j',k',l')<(j,k,l)}F_{2}^{j'}F_3^{k'}K_2^{l'}\C[K^{\pm1}, C_1]v_\eta.$$
For any element $w\in M_{j,k,l}\setminus M'_{j,k,l}$ we denote $\deg(w)=(j,k,l)$, called the {\bf degree} of $w$.
We also denote 
$$M_n=\sum_{j+k=n}F_{2}^{j}F_3^{k}K_2^{l}\C[K^{\pm1}, C_1]v_\eta,\ \text{and}\ M'_{n}=\sum_{n'<n}M_{n'},\ \forall\ n\in\Z_+.$$

%We have a natural $\Z_+^2$-gradation on $M(\eta)$
%$$M(\eta)=\bigoplus_{n,k\in\Z_+;n\geq k} M_{n,k}, \quad M_{n,k}=F_2^{n-k}F_{3}^{k}\C[K_2^{\pm1}]\C[K^{\pm1}, C_1]w.$$
%We also denote $M_n=\sum_{k+j=n}M_{j,k}$.
As usual, for any $k\in\Z$, we denote 
$$[k]=\frac{q^{k}-q^{-k}}{q-q^{-1}} \quad\text{and}\quad [K_i;k]=\frac{q^{k}K_i-q^{-k}K_i^{-1}}{q-q^{-1}},\ i=1,2.$$
The following identities and relations will be used from time t time.

\begin{lemma}\label{compute} For any $j,k,n\in\Z_+$ and $l\in\Z$, we have
\begin{enumerate}
  \item $F_1F_2^j=[j]F_2^{j-1}F_3+q^jF_2^jF_1;$
  \item $[E_1, F_2^jF_3^k]=[k]F_2^{j+1}F_3^{k-1}K_1^{-1}$;
  \item $[E_2, F_2^jF_3^k]=[j]F_2^{j-1}F_3^k[K_2;1-j-k]-q^{1-k}[k]F_2^jF_3^{k-1}K_2F_1$.
 \item $(E_1-q^l\a)M_{j,k,l}\subseteq M'_{j,k,l}$ and $E_2M_{j,k,l}\subseteq M'_{j,k,l}$, where  $M'_{0,0,0}=0$.
 \item $E_1M_n\subseteq M_n$ and $E_2M_n\subseteq M_{n-1}$, where $M_{-1}=0$.
% \item $[E_2, F_2^jF_3^k]=F_2^{j-1}F_3^k\frac{q^{3-k}(1-q^{-2j})K_2+q^{k+1}(1-q^{2j})K_2^{-1}}{(1-q^2)^2}-\frac{1-q^{-2k}}{1-q^{-2}}F_2^jF_3^{k-1}K_2F_1$.
\end{enumerate}
\end{lemma}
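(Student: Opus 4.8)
The plan is to prove each of the five assertions by direct computation, working inductively and exploiting the relations in Section 2 together with the triangular decomposition.

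For items (1)--(3), I would argue by induction on $j$ (and, for (3), also keeping track of $k$). The base case $j=0$ of (1) is trivial, and the inductive step uses the relation $F_1F_2 = qF_2F_1 + F_3$ (equivalently the definition $F_3 = F_1F_2 - qF_2F_1$) together with $F_2F_3 = qF_3F_2$ from \eqref{E_3 F_3}: write $F_1F_2^j = (F_1F_2^{j-1})F_2$, apply the inductive hypothesis, and then push the stray $F_1$ past one more $F_2$, collecting the $[j]$ coefficient from the identity $[j-1]q + q^{j-1} = [j]$. For (2) and (3), the strategy is the same: $[E_i, F_2^jF_3^k]$ is computed by the Leibniz-type expansion $[E_i, AB] = [E_i,A]B + A[E_i,B]$, reducing to the commutators $[E_1,F_2]=0$, $[E_1,F_3] = F_2K_1^{-1}$, $[E_2,F_2] = (K_2-K_2^{-1})/(q-q^{-1})$, $[E_2,F_3] = -K_2F_1$ from \eqref{comm} and \eqref{E_3 F_3}, and then moving the resulting $K_i^{\pm1}$ factors to the right through the remaining $F_2$'s and $F_3$'s using the commutation relations $K_iF_2 = q^{\mp a_{i2}}F_2K_i$ and $K_iF_3 = q^{-1}F_3K_i$; the $q$-integer coefficients $[j]$, $[k]$ arise exactly as in (1), and in (3) the shift $[K_2;1-j-k]$ appears because each $F_2$ and each $F_3$ contributes a factor $q^{-1}$ when $K_2^{\pm1}$ is transported past it. For (3) I expect to need (1) once, to rewrite a term of the form $F_1F_2^{j-1}F_3^k$ that appears after commuting $E_2$ past $F_3^k$.

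Items (4) and (5) then follow from (2) and (3) by inspecting degrees. For (4): on the basis element $F_2^jF_3^kK_2^l\C[K^{\pm1},C_1]v_\eta$ we have $E_1 \cdot F_2^jF_3^kK_2^l w = F_2^jF_3^kK_2^l E_1 w + [E_1, F_2^jF_3^k]K_2^l w + F_2^jF_3^k[E_1,K_2^l]w$. The first term is $q^l\a\, F_2^jF_3^kK_2^l w$ because $K_2$ scales $E_1$ by $q^{a_{21}\cdot(-1)}=q$... more precisely $K_2^l E_1 = q^{-a_{21}l}E_1 K_2^l = q^l E_1 K_2^l$, and $E_1$ acts on $\C[K^{\pm1},C_1]v_\eta$ by $\a$ since $K$ and $C_1$ commute with $E_1$ and $E_1v_\eta = \a v_\eta$; so $(E_1 - q^l\a)$ kills this leading part. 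The commutator term $[E_1,F_2^jF_3^k]K_2^l w = [k]F_2^{j+1}F_3^{k-1}K_1^{-1}K_2^l w$ has degree $(j+1,k-1,\cdot)$, which is strictly smaller than $(j,k,l)$ in the chosen order since $j+k$ is unchanged but $k$ strictly decreases; and $[E_1,K_2^l]=0$. Hence $(E_1-q^l\a)M_{j,k,l}\subseteq M'_{j,k,l}$. The statement $E_2M_{j,k,l}\subseteq M'_{j,k,l}$ is similar using (3): $E_2 w = 0$ on $\C[K^{\pm1},C_1]v_\eta$ (as $\eta(E_2)=0$ and $E_2$ commutes with $K,C_1$ — this needs a line to check, using $[E_2,F_1]=0$ and the relations for $K$), while $[E_2,F_2^jF_3^k]$ produces terms of degrees $(j-1,k,\cdot)$ and $(j,k-1,\cdot)$, both with $j+k$ strictly smaller, hence in $M'_{j,k,l}$; one also moves the $K_2^{\pm1}$, $F_1$ factors appearing in (3) to the right, noting $F_1\C[K^{\pm1},C_1]v_\eta\subseteq\C[K^{\pm1},C_1]v_\eta$ since $F_1E_1 = C_1 - (qK_1+q^{-1}K_1^{-1})/(q-q^{-1})^2$. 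Finally (5) is the coarser $\Z_+$-grading version of (4): $E_1$ does not change $j+k$ (from (2), $[E_1,F_2^jF_3^k]$ lands in the same total degree $n=j+k$, and $E_1$ commutes with $K_2$ and acts as a scalar on $\C[K^{\pm1},C_1]v_\eta$), so $E_1M_n\subseteq M_n$; and $E_2$ strictly decreases $j+k$ by one (from (3)), so $E_2M_n\subseteq M_{n-1}$.

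The only genuinely delicate points are bookkeeping: correctly tracking the powers of $q$ produced when commuting $K_i^{\pm1}$ past strings of $F_2$'s and $F_3$'s (which is what fixes the exact form of the $[K_2;1-j-k]$ term and the exponent $1-k$ in (3)), and verifying that the auxiliary factors $K_1^{-1}$, $K_2$, $F_1$ that appear in (2)--(3) do not spoil the degree estimates in (4)--(5) — they don't, because $K_1,K_2$ preserve the filtration (they normalize each $F_2^jF_3^kK_2^l$ up to a scalar) and $F_1$ preserves $\C[K^{\pm1},C_1]v_\eta$. I do not expect any step to be conceptually hard; the main obstacle is simply getting the $q$-exponents and $q$-integer identities (such as $q^{j-1}+q[j-1]=[j]$ and $q^{1-k}[k]+[\,\cdot\,]=\cdots$) exactly right throughout the induction.
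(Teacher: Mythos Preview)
Your approach is correct and is precisely what the paper has in mind: its entire proof is the single line ``All assertions can be checked straightforward,'' and your outline fills that in exactly as intended. Two small bookkeeping slips to correct when you write it up (both of the type you anticipated): $E_1$ does not literally commute with $K_2^l$, and $F_1$ does not preserve $\C[K^{\pm1},C_1]v_\eta$ on the nose (since $K_1=KK_2^{-2}\notin\C[K^{\pm1},C_1]$) --- but $E_1K_2^l=q^lK_2^lE_1$ and $F_1$ maps $K_2^l\C[K^{\pm1},C_1]v_\eta$ into $\sum_m K_2^m\C[K^{\pm1},C_1]v_\eta$, which is all that (4) and (5) require.
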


\begin{proof}
%The third item follows from (1) and (2), which 
All assertions can be checked straightforward.
\end{proof}

%Then we can compute the action of $E_1, E_2$ on elements in $M_n$:
%\begin{lemma} Suppose $u=F_2^jF_3^kPQ$, for some $j,k\in\Z_+$, $P\in\C[K_2^{\pm1}]$ and $Q\in\C[K^{\pm1}, C_1]$, then we have
%\begin{enumerate}
%  \item $E_1u-\eta(E_1)u=\frac{q^{-k+1}(1-q^{2k})}{1-q^2}F_2^{j+1}F_3^{k-1}K_1^{-1}$;
%  \item $E_2u=F_2^jF_3^k]=F_2^{j-1}F_3^k\sum_{i=1}^{j}\frac{q^{2-2i-k}K_2-q^{2i-2+k}K_2^{-1}}{q-q^{-1}}-\frac{1-q^{-2k}}{1-q^{-2}}F_2^jF_3^{k-1}K_2F_1$.
%\end{enumerate}
%\end{lemma}

For any ideal $I$ of the commutative algebra $\C[K^{\pm1}, C_1]$, we denote
by $W(\eta,I)$ the $U$-submodule of $M(\eta)$ generated by $Iv_\eta$, that is, $W(I,\eta)=UIv_\eta$. Set $V(\eta,I)=M(\eta)/W(\eta,I)$. We will see later that $W(\eta,I), I\ideal\C[K^{\pm1},C_1]$, are precisely the submodules
generated by subsets of Whittaker vectors of type $(\a,0)$ in $M(\eta)$ (see Remark \ref{vector gen mod}).

The following obvious results will be used frequently in the rest of the paper.

\begin{lemma} \label{W(eta,I)} 
For any ideal $I\ideal \C[K^{\pm1}, C_1]$, we have
$$W(\eta,I)=\bigoplus_{k,j\in\Z_+, l\in\Z}F_2^{j}F_3^{k}K_2^lIv_\eta.$$
\end{lemma}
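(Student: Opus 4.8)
The plan is to establish the asserted decomposition by showing both inclusions, with the reverse inclusion being the content of the lemma and the forward inclusion being nearly immediate. First I would set $N = \bigoplus_{j,k\in\Z_+,\,l\in\Z} F_2^j F_3^k K_2^l I v_\eta$. Since $I v_\eta \subseteq N$ (take $j=k=l=0$) and $W(\eta,I)$ is by definition the smallest $U$-submodule containing $I v_\eta$, the inclusion $W(\eta,I) \subseteq N$ will follow once I check that $N$ is a $U$-submodule; conversely, each generator $F_2^j F_3^k K_2^l f v_\eta$ (with $f\in I$) lies in $U I v_\eta = W(\eta,I)$ because $F_2^j F_3^k K_2^l \in U$ and $f v_\eta \in I v_\eta$, so the containment $N \subseteq W(\eta,I)$ holds term by term and hence $N \subseteq W(\eta,I)$. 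Thus the whole lemma reduces to verifying that $N$ is stable under left multiplication by each of the algebra generators $E_1, E_2, F_1, F_2, K_1^{\pm1}, K_2^{\pm1}$.

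The stability checks use the relations recorded in Section~2 and Lemma~\ref{compute}, together with the fact that $\C[K^{\pm1}, C_1]$ commutes with $E_1, F_1, K_1$ and that $I$ is an ideal of $\C[K^{\pm1}, C_1]$. For $K_2^{\pm1}$ and $K_1^{\pm1}$: commuting a $K_i^{\pm1}$ past $F_2^j F_3^k K_2^l$ produces only powers of $q$ by \eqref{E_3 F_3} and the $K$--$F$ relations, and $K_i^{\pm1}$ then acts on $\C[K^{\pm1},C_1]v_\eta$ within $\C[K^{\pm1},C_1]v_\eta$ — here one should note $K_1 = K K_2^{-2}$ up to the appropriate rearrangement so that $K_1^{\pm1}\C[K^{\pm1},C_1]v_\eta$ stays inside $K_2^{\ast}\C[K^{\pm1},C_1]v_\eta$; in any case the ideal $I$ is preserved since multiplication by units $K^{\pm1}$ maps $I$ to $I$. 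For $F_2$: left multiplication by $F_2$ sends $F_2^j F_3^k K_2^l$ to $F_2^{j+1}F_3^k K_2^l$ after moving $F_2$ past nothing, so this is trivial. For $F_1$: use Lemma~\ref{compute}(1), $F_1 F_2^j = [j]F_2^{j-1}F_3 + q^j F_2^j F_1$, and the relations $F_1 F_3 = q^{-1}F_3 F_1$, $K_iF_3 = q^{-1}F_3K_i$, so that $F_1 \cdot F_2^j F_3^k K_2^l f v_\eta$ becomes a $\C$-combination of terms $F_2^{j'}F_3^{k'}K_2^{l'} f' v_\eta$ with $f' = (\text{unit})\cdot f \in I$ — one also uses $F_1 v_\eta$: since $F_1 = C_1^{-1}(\dots)$? no — rather $F_1 \C[K^{\pm1},C_1]v_\eta \subseteq \C[K^{\pm1},C_1]v_\eta$ because $F_1 E_1 = C_1 - \frac{qK_1+q^{-1}K_1^{-1}}{(q-q^{-1})^2}$ and $E_1 v_\eta = \a v_\eta$, so $F_1 v_\eta = \a^{-1}(C_1 - \text{rational in }K_1)v_\eta$, and multiplication by $C_1$ and by $K^{\pm1}$ preserves $I$.

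The remaining and most delicate cases are $E_1$ and $E_2$, and this is where I expect the main obstacle to lie. Here I would use Lemma~\ref{compute}(2)--(3): $[E_1, F_2^j F_3^k] = [k]F_2^{j+1}F_3^{k-1}K_1^{-1}$ and $[E_2, F_2^j F_3^k]$ is a two-term expression involving $[K_2; 1-j-k]$ and $K_2 F_1$. Writing $E_i \cdot F_2^j F_3^k K_2^l f v_\eta = [E_i, F_2^jF_3^k]K_2^l f v_\eta + F_2^j F_3^k E_i K_2^l f v_\eta$ and commuting $E_i$ through $K_2^l$ (picking up a power of $q$), one reduces to understanding $E_i f v_\eta$ for $f\in I \subseteq \C[K^{\pm1},C_1]$: since $E_1$ commutes with $K$ and $C_1$ and $E_1 v_\eta = \a v_\eta$ we get $E_1 f v_\eta = \a f v_\eta \in I v_\eta$; for $E_2$ one must commute $E_2$ past $K^{\pm1}$ and $C_1 = F_1E_1 + \dots$, and since $E_2 v_\eta = 0$ and the commutators reintroduce only elements of $U^{(1)} = \langle E_1,K_1,F_1\rangle$ acting on $\C[K^{\pm1},C_1]v_\eta$ (which stays in $\C[K^{\pm1},C_1]v_\eta$ with ideal $I$ preserved), everything closes up. The bookkeeping — tracking which $K_2$-powers and ideal elements appear after all the commutators — is routine but must be organized carefully; once it is done, $N$ is visibly a submodule, the directness of the sum is inherited from the PBW basis of $M(\eta)$ displayed above, and the proof is complete.
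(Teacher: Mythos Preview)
Your overall strategy is correct and is essentially the only way to do this; the paper itself gives no proof, listing the lemma among ``obvious results.'' However, two of your intermediate assertions are false as stated and need repair.

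First, the claim $F_1\,\C[K^{\pm1},C_1]v_\eta\subseteq\C[K^{\pm1},C_1]v_\eta$ is wrong. Since $K_1=KK_2^{-2}$, the identity $F_1v_\eta=\a^{-1}\bigl(C_1-\tfrac{qK_1+q^{-1}K_1^{-1}}{(q-q^{-1})^2}\bigr)v_\eta$ shows that $F_1v_\eta$ lies in $\C[K^{\pm1},C_1]v_\eta\oplus K_2^{-2}\C[K^{\pm1},C_1]v_\eta\oplus K_2^{2}\C[K^{\pm1},C_1]v_\eta$, not in the first summand alone. This does not break the argument: the three coefficients $C_1$, $K$, $K^{-1}$ all send $I$ into $I$, so $F_1\cdot Iv_\eta\subseteq\bigoplus_l K_2^lIv_\eta\subseteq N$ still holds; you just have to state the correct target.

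Second, the claim that $[E_2,C_1]$ ``reintroduces only elements of $U^{(1)}$'' is also false: one has $[E_2,C_1]=F_1(E_2E_1-E_1E_2)+(\text{terms ending in }E_2)$, and $E_2E_1\notin U^{(1)}$. The clean fix is to bypass this commutator entirely and cite the observation made in the paper just before the statement of the lemma: every $w\in\C[K^{\pm1},C_1]v_\eta$ is a Whittaker vector, so in particular $E_2fv_\eta=0$ for all $f\in I$. Then $E_2\cdot F_2^jF_3^kK_2^lfv_\eta=[E_2,F_2^jF_3^k]K_2^lfv_\eta$, and Lemma~\ref{compute}(3) expresses this commutator via $[K_2;\cdot]$ and $K_2F_1$, both of which you have already shown preserve $N$ (after the $F_1$ correction above). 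With these two fixes your proof goes through.
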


\begin{lemma}\label{claim}
Let $I$ be an ideal of $\C[K^{\pm1},C_1]$ and $w=\sum_{(j,k,l)\in\Gamma}F_2^{j}F_3^{k}K_2^{l}Q_{j,k,l}v_\eta$, where $Q_{j,k,l}\in\C[K^{\pm1},C_1]$. Then $w\in W(\eta, I)$ if and only if $Q_{j,k,l}\in I$ for any $(j,k,l)\in \Gamma$.
\end{lemma}

\begin{lemma} \label{claim'} 
Suppose that $N$ is a subspace of $M(\eta)$ stable under the action of $E_1$, then $\sum_{i=1}^kK_2^iQ_iv_\eta\in N$ for some $Q_i\in\C[K^{\pm1}, C_1]$ implies $Q_iv_\eta\in N$ for all $i=1,\dots,k$.
\end{lemma}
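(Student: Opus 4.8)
The plan is to exploit the action of $E_1$ to strip off the $K_2$-degree one monomial at a time, reducing to an induction on $k$. Writing $w=\sum_{i=1}^k K_2^i Q_i v_\eta$ with $Q_i\in\C[K^{\pm1},C_1]$, the key observation is that $E_1$ acts on $\C[K^{\pm1},C_1]v_\eta$ as multiplication by $\a$ (since $E_1 v_\eta=\a v_\eta$ and $E_1$ commutes with both $K$ and $C_1$), while $E_1 K_2^i = q^{-i} K_2^i E_1$ by the relation $K_1^{\pm1}E_j=q^{\pm a_{ij}}E_j K_1^{\pm1}$ with $a_{12}=-1$. Hence $E_1 w = \a\sum_{i=1}^k q^{-i} K_2^i Q_i v_\eta$, and since $N$ is $E_1$-stable, this element also lies in $N$. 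More generally, applying $E_1$ repeatedly, $E_1^m w = \a^m \sum_{i=1}^k q^{-im} K_2^i Q_i v_\eta \in N$ for every $m\ge 0$.

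First I would record that the $k$ vectors $E_1^0 w, E_1^1 w,\dots,E_1^{k-1}w$ (all in $N$) are, up to the nonzero scalars $\a^m$, given by the system whose coefficient matrix is the Vandermonde-type matrix $\big(q^{-im}\big)_{0\le m\le k-1,\,1\le i\le k}$ in the "unknowns" $K_2^i Q_i v_\eta$. Since $q$ is not a root of unity, the values $q^{-1},q^{-2},\dots,q^{-k}$ are pairwise distinct, so this Vandermonde matrix is invertible. Taking the appropriate $\C$-linear combination of $E_1^0 w,\dots,E_1^{k-1}w$ therefore isolates each $K_2^i Q_i v_\eta$ individually as an element of $N$. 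This immediately gives $K_2^i Q_i v_\eta\in N$ for every $i$.

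It remains to pass from $K_2^i Q_i v_\eta\in N$ to $Q_i v_\eta\in N$. Here I would use that $K_2$ is invertible in $U$ (so $K_2^{-i}\in U$) and that $K_2^{-i}$ commutes with $Q_i\in\C[K^{\pm1},C_1]$ — indeed $K$ commutes with $K_2$, and $C_1$ commutes with $K_2$ since $[C_1,K]=0$ is stated and more directly $C_1\in U^{(1)}$ while $K_2$ normalizes $U^{(1)}$; in any case $K_2^{-i}$ simply rescales $C_1$-monomials by powers of $q$ and fixes powers of $K$, so $K_2^{-i}Q_i v_\eta \in \C[K^{\pm1},C_1]v_\eta$. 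Applying $K_2^{-i}\in U$ to $K_2^i Q_i v_\eta\in N$ and using that $N$ is a subspace of the $U$-module $M(\eta)$ — wait, $N$ is only assumed $E_1$-stable, not $U$-stable, so I cannot apply $K_2^{-i}$ directly.

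The honest route is instead to run the argument with $K_2^{-1}$ or, better, to observe that one can always multiply $w$ through by a suitable power of $K_2$ at the outset: since $N$ need only be $E_1$-stable and the statement is about membership, I would first note that the hypotheses are vacuous unless we already think of $w$ as given in $N$, and the conclusion $Q_i v_\eta\in N$ is what we want. The cleanest fix: rewrite the sum with indices possibly negative, $w=\sum_{i\in S} K_2^i Q_i v_\eta$ for a finite $S\subset\Z$; the Vandermonde argument above works verbatim with the distinct scalars $\{q^{-i}: i\in S\}$ and isolates each $K_2^i Q_i v_\eta\in N$; then apply $E_1^{\,0}$ trickery is not enough, so one genuinely needs closure under some negative-power operator. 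I expect this last point — getting from $K_2^i Q_i v_\eta\in N$ to $Q_i v_\eta\in N$ without assuming more than $E_1$-stability — to be the main subtlety, and I anticipate the paper resolves it by noting that in the intended applications $N$ is in fact stable under all of $U^0$ (or at least under $K_2^{\pm1}$), or by absorbing the $K_2^i$ into the definition of the $Q_i$ so that the statement is really just the Vandermonde separation of $K_2$-degrees. The Vandermonde/non-root-of-unity step is the mathematical heart and is routine; the bookkeeping about which operators preserve $N$ is the only place care is needed.
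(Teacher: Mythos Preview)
Your approach is exactly the paper's: apply powers of $E_1$, use that $E_1 K_2^i = q^{i}K_2^i E_1$ (you have the sign backwards --- from $K_2 E_1 = q^{a_{21}}E_1K_2 = q^{-1}E_1K_2$ one gets $E_1K_2^i = q^{i}K_2^iE_1$, not $q^{-i}$; this is harmless for the Vandermonde step), and invert the Vandermonde matrix to isolate each $K_2^iQ_iv_\eta$ in $N$.

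The concern you raise at the end is entirely legitimate, and in fact the paper's own proof stops at exactly the point you do: it concludes only $K_2^iQ_iv_\eta\in N$, not $Q_iv_\eta\in N$. So the lemma as stated is slightly stronger than what is (or can be) proved from mere $E_1$-stability. Your diagnosis of how this is resolved is correct: in every application of the lemma in the paper, $N$ is a genuine $U$-submodule (e.g., in the proof of Proposition~\ref{submodule} and Theorem~\ref{critical}), so one may freely apply $K_2^{-i}$ afterward. The honest reading is that the intended conclusion of the lemma is just $K_2^iQ_iv_\eta\in N$, and your Vandermonde argument proves exactly that.
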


\begin{proof}
Suppose $\sum_{i=1}^kK_2^iQ_iv_\eta\in N$, then applying $E_1^j$ on it, we get
$$E_1^j\sum_{i=1}^kK_2^iQ_iv_\eta=\sum_{i=1}^kq^{ij}K_2^iE_1^jQ_iv_\eta=\a^j \sum_{i=1}^kq^{ij}K_2^iQ_iv_\eta\in N,\ \forall\ j\in\N.$$
Using the property of the Vandermonde's determinant, we deduce
$K_2^iQ_iv_\eta\in N$.
\end{proof}

%\begin{lemma}\label{E_1 action} Let $I$ be an ideal of $\C[K^{\pm1}, C_1]$. Suppose
%\begin{equation}\label{u_n_0,1}
%u=F_2^{n_0-k_0}F_3^{k_0}K_2^{l_0}Q_{k_0,l_0}w+\sum_{k=0}^{k_0-1}\sum_{l\in\Z}F_2^{n_0-k}F_3^{k}K_2^{l}Q_{k,l}w,
%\end{equation}
%where $n_0, k_0\in\N, l_0\in\Z$ with $0\leq k_0\leq n_0$ and $Q_{k,l}\in\C[K^{\pm1}, C_1]$ with $Q_{k_0,l_0}\notin I$ and only finitely many $Q_{k,l}$ nonzero.
%Then $(E_1-\a q^{l_0})u\in W(\eta,I)$ if and only if 
%\begin{equation*}\label{u_n_0-2}
%u=\sum_{k=0}^{k_0}\sum_{j=0}^{k_0-k} q^{\frac{1}{2}k(2j+k-1)}\qchoose{j+k}{k}F_2^{n_0-k}F_3^kK_2^{l_0+2j}\big(\a(1-q^2)q^{l_0}K\big)^kQ_{l_0+2j+2k}w\hskip-5pt\mod W(\eta,I),
%\end{equation*}
%where $Q_{l_0+2j+2k}=Q_{0,l_0+2j+2k}$ for $0\leq j\leq k_0-k$ and $0\leq k\leq k_0$.
%\end{lemma}

\section{Whittaker vectors and non-critical irreducible Whittaker modules}

Let notation be as before. Since irreducible Whittaker modules are just irreducible quotient modules
of the universal Whittaker module $M(\eta)$, we need to determine maximal submodules of $M(\eta)$.
We start with the study of structure of submodules $W(\eta,I)$, which is generated by $Iv_\eta$ for an ideal $I\ideal\C[K^{\pm1}, C_1]$.

\begin{lemma}\label{E_1 action} 
Let $I$ be an ideal of $\C[K^{\pm1}, C_1]$ and $n_0\in\Z_+$. Suppose
\begin{equation}\label{u_n_0,1}
u=\sum_{k=0}^{n_0}\sum_{l\in\Z}F_2^{n_0-k}F_3^{k}K_2^{l}Q_{k,l}v_\eta\in M_{n_0},
\end{equation}
where only finitely many $Q_{k,l}\in\C[K^{\pm1}, C_1]$ are nonzero.
Suppose $u\notin W(\eta,I)$ in addition, then $(E_1-\a q^{l_0})u\in W(\eta,I)$ for some $l_0\in\Z$ if and only if 
\begin{equation}\label{u_n_0-2}
u\equiv\sum_{k=0}^{k_0}\sum_{j=0}^{k_0-k} q^{\frac{1}{2}k(k-1)}\qchoose{j+k}{k}F_2^{n_0-k}F_3^kK_2^{l_0+2j}\big(\a(1-q^2)q^{l_0+j}K\big)^kQ_{l_0+2j+2k}v_\eta\hskip-5pt\mod W(\eta,I),
\end{equation}
for some $0\leq k_0\leq n_0$, where $Q_{l_0+2i}=Q_{0,l_0+2i}$ for $0\leq i\leq k_0$ with $Q_{l_0+2k_0}\notin I$, the $q$-binomial coefficients $\qchoose{j+k}{k}=\frac{[j+k]!}{[j]![k]!}$, $[k]!=[k][k-1]\cdots[1]$ and $[0]!=1$ for $k,j\in\Z_+$.
\end{lemma}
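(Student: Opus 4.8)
The plan is to analyze the action of $E_1$ on the filtration $M_{n_0}$ by reducing modulo $W(\eta, I)$ and tracking the leading degree terms. First I would observe, using Lemma \ref{compute}(2), that $E_1$ acting on $F_2^{n_0-k}F_3^k$ produces $[k] F_2^{n_0-k+1}F_3^{k-1}K_1^{-1}$, i.e. it lowers $k$ by one and raises $j$ by one while multiplying by $K_1^{-1}$; combined with the commutation relations of $K_1$ past $F_2, F_3, K_2$ and the fact that $E_1 Q v_\eta = \a Q v_\eta$ for $Q \in \C[K^{\pm1}, C_1]$ (since $K = K_1 K_2^2$ and $C_1$ commute with $E_1$ and $E_1 v_\eta = \a v_\eta$), one gets an explicit formula for $(E_1 - \a q^l) u$ in terms of the $Q_{k,l}$. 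The key point is to rewrite everything in the $K$-variable: since $K_1 = K K_2^{-2}$, each application of $E_1$ that strips an $F_3$ also shifts the $K_2$-exponent, which is exactly what produces the binomial sum in \eqref{u_n_0-2}.

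The main technical step is the "only if" direction. Here I would work by downward induction on the top index $k$ appearing with a nonzero coefficient mod $W(\eta, I)$, or more precisely fix the sub-leading structure in the degree order defined before Lemma \ref{compute}. Suppose $(E_1 - \a q^{l_0})u \in W(\eta, I)$. Using Lemma \ref{claim} to detect membership in $W(\eta, I)$ coefficient-by-coefficient, and Lemma \ref{claim'} to separate distinct $K_2$-powers (valid because $W(\eta, I)$ and $M_{n_0}$ modulo $W(\eta,I)$ are $E_1$-stable), I would extract a recursion: the condition forces, for each relevant $(k, l)$, that $Q_{k,l}$ be determined (mod $I$) by $Q_{k+1, l'}$ for an appropriate shifted $l'$, with the multiplier being a power of $\a(1-q^2)q^{\bullet}K$ times a $q$-binomial coefficient coming from iterating Lemma \ref{compute}(1)–(2). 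Unwinding this recursion down from the top surviving index $k_0$ (where $Q_{0, l_0 + 2k_0} \notin I$, which exists precisely because $u \notin W(\eta, I)$) yields exactly formula \eqref{u_n_0-2}. The $q$-binomial $\qchoose{j+k}{k}$ and the power $q^{\frac{1}{2}k(k-1)}$ should drop out of the combinatorics of composing the maps $F_3 \mapsto F_2 F_3^{k-1}$ repeatedly — this is the standard $q$-Pascal bookkeeping and I would verify it by induction on $k$ using $\qchoose{j+k}{k} = q^{j}\qchoose{j+k-1}{k-1} + q^{-k}\qchoose{j+k-1}{k}$ or its variant matching the normalization here.

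For the "if" direction, I would simply take $u$ of the form on the right-hand side of \eqref{u_n_0-2} and compute $(E_1 - \a q^{l_0})u$ directly using Lemma \ref{compute}(2) and the $K$-relations, checking that the binomial identity makes all terms telescope into $W(\eta, I)$ (equivalently, all surviving coefficients land in $I$, invoking Lemma \ref{claim}). This is a finite verification once the "only if" direction has pinned down the right ansatz.

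I expect the main obstacle to be the combinatorial bookkeeping in the "only if" direction: correctly tracking how the $K_2$-exponent shifts (the $l_0 + 2j$ and $l_0 + 2j + 2k$ indices), how the scalar $\a(1-q^2)q^{l_0+j}K$ accumulates its exponent, and how the $q$-binomial coefficients assemble, all simultaneously, while staying organized modulo $W(\eta, I)$ and modulo $I$. Choosing the right order of induction (on $k_0$, versus on the degree filtration, versus peeling off one $E_1$ at a time) so that the recursion closes cleanly is where the real care is needed; the underlying algebra is all routine application of Lemma \ref{compute} and the defining relations \eqref{comm}–\eqref{E_3 F_3}.
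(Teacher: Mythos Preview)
Your proposal is correct and follows essentially the same approach as the paper's own proof: compute $E_1 F_2^{n_0-k}F_3^k K_2^l Q v_\eta$ explicitly via Lemma~\ref{compute}(2), rewrite $K_1^{-1}$ as $K^{-1}K_2^2$, use Lemma~\ref{claim} to read off a coefficient-wise recursion between $Q_{k,\cdot}$ and $Q_{k-1,\cdot}$ modulo $I$, pin down that only $l=l_0$ survives at the top index $k_0$, and then solve the recursion to obtain the $q$-binomial formula \eqref{u_n_0-2}. Two small remarks: Lemma~\ref{claim'} is not actually needed here (Lemma~\ref{claim} alone separates the $K_2$-powers since the basis is indexed by $(j,k,l)$), and the paper runs the recursion so as to express $Q_{k,l_0+2j}$ in terms of the bottom-level data $Q_{0,l_0+2j+2k}$ rather than the top-level $Q_{k_0,l_0}$, which is why the free parameters in \eqref{u_n_0-2} are the $Q_{l_0+2i}=Q_{0,l_0+2i}$.
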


\begin{proof} %If $n_0=0$, the result is obvious by taking $l_0=l$ and $k_0=0$. 
%Now suppose $n_0\geq 1$. 
If $u$ is of the form in \eqref{u_n_0-2}, we can check straightforward that $(E-\a q^{l_0})u\in W(\eta, I)$. 
Now suppose on the contrary $(E-\a q^{l_0})\in W(\eta, I)$ for some $l_0\in\Z$.

By Lemma \ref{compute}, we can compute fro any $\ (j,k,l)\in\Gamma$ and $Q\in\C[K^{\pm1},C_1]$ that
\begin{equation}\label{E_1u}\begin{split}
  E_1F_2^{j}F_3^{k}K_2^{l}Qv_\eta= &[E_1,F_2^{j}F_3^{k}]K_2^{l}Qv_\eta+F_2^{j}F_3^{k}E_1K_2^{l}Qv_\eta\\
    = &[k]F_2^{j+1}F_3^{k-1}K_2^{l+2}K^{-1}Qv_\eta +\a q^{l}F_2^{j}F_3^{k}K_2^{l}Qv_\eta.\\
\end{split}\end{equation}

Let $k_0$ be the maximal $k\in\Z_+$ such that $Q_{k,l}\notin I$ for some $l\in\Z$.
Denote $S=\{l\in\Z\ |\ Q_{k_0,l}\notin I\}$ which is finite. Without loss of generality, we assume $Q_{k_0,l}=0$ for $l\notin S$. There exists $s\in\N$ such that
$$S=\{l_1,l_2,\dots,l_s\}\ \text{with}\ l_1\succ l_2\succ\dots\succ l_s.$$ 
Then we have
\begin{equation}\label{E_1-alpha}\begin{split}
&(E_1-\a q^{l_0})u\\
   \equiv &\sum_{l\in S}[k_0]F_2^{n_0-k_0+1}F_3^{k_0-1}K_2^{l+2}K^{-1}Q_{k_0,l}v_\eta+\a\sum_{l\in S}(q^{l}-q^{l_0})F_2^{n_0-k_0}F_3^{k_0}K_2^{l}Q_{k_0,l}v_\eta\\
   \equiv & \a\sum_{i=1}^s(q^{l_i}-q^{l_0})F_2^{j_0}F_3^{k_0}K_2^{l_i}Q_{k_0,l_i}v_\eta\mod M'_{n_0-k_0,k_0,0}.
\end{split}\end{equation}
Then $(E_1-\a q^{l_0})u\in W(\eta,I)$ implies $s=1$ and $l_1=l_0$.

Now we have $Q_{k_0,l}=0$ for $l\neq l_0$.
%\begin{equation}\label{u_n_0,1}
%u_{n_0}=\sum_{k=0}^{k_0}F_2^{n_0-k}F_3^{k}P_{k}w=\sum_{k=0}^{k_0}\sum_{l\in\Z}F_2^{n_0-k}F_3^{k}K_2^{l}Q_{k,l}w,
%\end{equation}
%wherer
%$$P_k=\sum_{l\in\Z}K_2^{l}Q_{k,l},\quad Q_{k,l}=Q_{n_0-k,k,l},\ \forall\ k=0,1,\cdots,k_0.$$
Applying $E_1$ to $u$, by \eqref{E_1u} we deduce
\begin{equation*}\begin{split}
      & E_1u-\a q^{l_0}u\\
    = & \sum_{k=0}^{k_0}\sum_{l\in\Z}[k]F_2^{n_0-k+1}F_3^{k-1}K_2^{l+2}K^{-1}Q_{k,l}v_\eta
               +\a\sum_{k=0}^{k_0-1}\sum_{l\in\Z}(q^l-q^{l_0})F_2^{n_0-k}F_3^{k}K_2^lQ_{k,l}v_\eta\\
    = & \sum_{k=1}^{k_0}\sum_{l\in\Z}F_2^{n_0-k+1}F_3^{k-1}\big([k]K_2^{l+2}K^{-1}Q_{k,l}+\a(q^l-q^{l_0})K_2^lQ_{k-1,l}\big)v_\eta.
\end{split}\end{equation*}
By hypothesis on $u$, we see that
\begin{equation}\label{Q_k,l condition}
\sum_{l\in\Z}[k]K_2^{l+2}K^{-1}Q_{k,l}-\a\sum_{l\in\Z}(q^{l_0}-q^{l})K_2^lQ_{k-1,l}\in \C[K_2]I,\ \quad \forall\ k=1,2,\dots,k_0.
\end{equation}
Noticing that $Q_{k_0,l}=\delta_{l,l_0}Q_{k_0,l_0}$, the above formula for $k=k_0$ implies $Q_{k_0-1,l}=0$ for all $l\neq l_0, l_0+2$.
Inductively, we can deduce that $Q_{k_0-i,l}=0$ for all $l\neq l_0, l_0+2,\dots, l_0+2i$.
Then the equation \eqref{Q_k,l condition} becomes
\begin{equation*}%\label{P_k condition}
\sum_{j=0}^i[k_0-i]K_2^{l_0+2j+2}K^{-1}Q_{k_0-i,l_0+2j}-\a\sum_{j=0}^{i+1}q^{l_0}(1-q^{2j})K_2^{l_0+2j}Q_{k_0-i-1,l_0+2j}\in \C[K_2]I,
\end{equation*}
or, more precisely,
\begin{equation*}%\label{P_k condition}
Q_{k_0-i,l_0+2j}-\frac{\a q^{l_0}(1-q^{2j+2})}{[k_0-i]} KQ_{k_0-i-1,l_0+2j+2}\in I,
\end{equation*}
for all $i=0,1,\dots,k_0-1$ and $j=0,1,\dots,i$.
Replacing each $Q_{k-i,l_0+2j}$ with $Q_{k-i,l_0+2j}$ plus a suitable elements in $I$, we may assume that
\begin{equation*}
Q_{k_0-i,l_0+2j}=\frac{\a q^{l_0+j}(1-q^2)[j+1]}{[k_0-i]} KQ_{k_0-i-1,l_0+2j+2},
\end{equation*}
for all $j=0,1,\dots,i$ and $i=0,1,\dots,k_0-1$.

Now setting $Q_{l_0+2i}=Q_{0,l_0+2i}$, we can deduce that
\begin{equation}\label{Q_k,l}
Q_{k,l_0+2j}=q^{\frac{1}{2}k(2j+k-1)}\qchoose{j+k}{k}\big(\a(1-q^2)q^{l_0}K\big)^kQ_{l_0+2j+2k},
\end{equation}
for $0\leq j\leq k_0-k$, $0\leq k\leq k_0$. Substituting this formula into \eqref{u_n_0,1} proves the lemma.
\end{proof}

For any $n\in\Z_+, l\in\Z$ and $Q\in\C[K^{\pm1}, C_1]$, we set 
%\begin{equation*}\label{u_n_0-3}\aligned
%u(n_0,l_0,Q)=&\sum_{k=0}^{n_0}\a^k(q^2-1)^kq^{\frac{1}{2}k(2n_0+2l_0+k-7)}q^{\frac{1}{2}(n_0-k)(n_0-k-1)}\qchoose{n_0}{k}F_2^{n_0-k}F_3^k\\
%&\hskip1cm\cdot (-q^{2k-2}K_2^2K^{-2}+1)^{n_0-k}_qK^{-k}K_2^{l_0}Q_{l_0}v_\eta,\\
%\endaligned\end{equation*}
%where $(a+b)_q^n=(a+b)(qa+q^{-1}b)\cdots(q^{n-1}a+q^{1-n}b)$ for any $a,b\in\C$. 
%We have the following identity:
%\begin{equation}\label{identity}
%(a+b)_q^n=\sum_{j=0}^n\qchoose{n}{j}q^{\frac{1}{2}(n-1)(2j-n)}a^jb^{n-j},\ \forall\ a,b\in\C.
%\end{equation}
%\begin{equation}\label{u_n}
%u(n,l,Q)=\sum_{k=0}^{n}\sum_{j=0}^{n-k}a_{kj}F_2^{n-k}F_3^k
%K_2^{l+2j}K^{-2j-k}Qv_\eta,
%\end{equation}
%where $a_{kj}(n)=(-1)^j\a^k(q^2-1)^kq^{j(n-3)}q^{\frac{1}{2}k(2n+2l+2j+k-7)}\qchoose{n}{k}\qchoose{n-k}{j}$.
\begin{equation}\label{u(n,l,Q)}
u(n,l,Q)=q^{2nl}K_2^l\sum_{k=0}^{n}\sum_{j=0}^{n-k}a_{kj}(n)F_2^{n-k}F_3^k
K_2^{2j}K^{-2j-k}Qv_\eta,
\end{equation}
where $a_{kj}(n)=(-1)^j\a^k(q^2-1)^kq^{j(n-3)}q^{\frac{1}{2}k(2n+2j+k-7)}\qchoose{n}{k}\qchoose{n-k}{j}$.
It is easy to check that $u(n,l,Q)=q^{2nl}K_2^lu(n,0,Q)$ for any $n\in\Z_+, l\in\Z$ and $Q\in\C[K^{\pm1},C_1]$.

\begin{remark}\label{u(n,0,Q)}
Later, we will see $u(n,0,Q)=g^nQv_\eta$ for some element $g\in\C[F_2,F_3,K_2,K]$ 
(see Corollary \ref{u(n)}). 
This much simpler expression is motivated by arguments in Section 5 (see Remark \ref{g_n}) and 
is hard to find out right now. So we leave it to the next section and use the expression in \eqref{u(n,l,Q)}, which is more convenient for our present purpose.
\end{remark}

\begin{lemma}\label{E_2 action} 
Let $I$ be an ideal of $\C[K^{\pm1}, C_1]$ and $n_0\in\Z_+$. Let $u\in M_{n_0}$ be as in \eqref{u_n_0,1}.
\begin{itemize} 
\item[(1)] If $n_0=0$, then $(E_1-\a q^{l_0})u, E_2u\in W(\eta,I)$ for some $l_0\in\Z$ if and only if $u=K_2^{l_0}Qv_\eta$ for some $Q\in\C[K^{\pm1}, C_1]$.
\item[(2)] If $n_0\geq 1$, then $(E_1-\a q^{l_0})u, E_2u\in W(\eta,I)$ for some $l_0\in\Z$ if and only if 
$u=u(n_0,l_0,Q_{l_0})$ and
$$E_2u\equiv q^{2l_0(n_0-1)}K_2^{l_0}\sum_{k=0}^{n_0-1}\sum_{j=0}^{n_0-k-1}b_{kj}(n_0)F_2^{n_0-k-1}F_3^kK_2^{2j+1}K^{-k-2j-1}h_{n_0}(K,C_1)Q_{l_0}v_\eta$$
$\mod W(\eta,I)$, where $b_{kj}(n_0)=q^{n_0-j-1}(q^2-1)^{-1}[n_0]a_{k,j}(n_0-1)$, $Q_{l_0}=Q_{0,l_0}$ and 
$$h_{n_0}(K,C_1)=q^{3-2n_0}K+q^{2n_0-3}K^{-1}-(q-q^{-1})^2C_1.$$ 
%$$b_{kj}=(-1)^{j}\a^{k}(q^2-1)^{k-1}q^{j(n_0-4)+n_0-1}q^{\frac{1}{2}k(2n_0+2l_0+2j+k-7)}[n_0]\qchoose{n_0-1}{k}\qchoose{n_0-k-1}{j}.$$
\end{itemize} 
%In particular, $(E_1-\a q^l_0)u, E_2u\in W(\eta,I)$ if and only if $Q_{l_0}\in\hat I_n$.
\end{lemma}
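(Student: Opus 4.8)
The plan is to build directly on Lemma \ref{E_1 action}, which already characterizes when $(E_1-\a q^{l_0})u\in W(\eta,I)$ in terms of the coefficients $Q_{k,l_0+2j}$ being forced to equal a specific $q$-binomial multiple of a single free polynomial $Q_{l_0}=Q_{0,l_0}$. So for part (2), with $n_0\geq 1$, I would start from the congruence \eqref{u_n_0-2} with $k_0=n_0$ (the case $k_0<n_0$ will be ruled out by the $E_2$-condition, or rather: if $k_0<n_0$ then $u$ is supported below degree $(n_0,0,0)$ which contradicts $u\in M_{n_0}\setminus M'_{n_0}$ after reduction mod $W(\eta,I)$ — I should be careful here and perhaps phrase it as: WLOG $k_0=n_0$, else replace $n_0$ by $k_0$). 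Then the only remaining unknown is $Q_{l_0}$, and the claim $u=u(n_0,l_0,Q_{l_0})$ amounts to checking that the explicit coefficients in \eqref{u_n_0-2} — after substituting \eqref{Q_k,l} and absorbing the $K^{-2j-k}$ versus $K$-power bookkeeping from $Q_{l_0+2j+2k}$ being itself built recursively — match the closed form $a_{kj}(n_0)$ in \eqref{u(n,l,Q)}. This is a routine but delicate reindexing: one substitutes $Q_{l_0+2i}=q^{\text{stuff}}\qchoose{\cdots}{\cdots}(\a(1-q^2)q^{l_0}K)^{\cdots}Q_{l_0}$ recursively, collects the powers of $q$, $\a$, $K$, $K_2$, and the $q$-binomials, and verifies the Vandermonde-type $q$-binomial identity $\qchoose{j+k}{k}\qchoose{n_0}{j+k}=\qchoose{n_0}{k}\qchoose{n_0-k}{j}$ that converts the iterated form into the product form.

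Next I would compute $E_2 u$ directly. Using Lemma \ref{compute}(3), $[E_2,F_2^jF_3^k]=[j]F_2^{j-1}F_3^k[K_2;1-j-k]-q^{1-k}[k]F_2^jF_3^{k-1}K_2F_1$, and noting $E_2$ kills $v_\eta$ and commutes past $\C[K^{\pm1},C_1]$ appropriately, apply $E_2$ termwise to $u=u(n_0,l_0,Q_{l_0})$. The two pieces of the commutator produce terms of degree $(n_0-1,k,*)$ from the $[j]$-part and degree $(n_0-k,k-1,*)$ from the $F_1$-part; the $F_1$ then has to be pushed through $F_2^{n_0-k}$ via Lemma \ref{compute}(1), $F_1F_2^j=[j]F_2^{j-1}F_3+q^jF_2^jF_1$, and the trailing $F_1v_\eta$ converts to $C_1$-plus-$K_1$ data since $C_1=F_1E_1+\frac{qK_1+q^{-1}K_1^{-1}}{(q-q^{-1})^2}$ gives $F_1v_\eta=\a^{-1}(C_1-\frac{qK_1+q^{-1}K_1^{-1}}{(q-q^{-1})^2})v_\eta$, and $K_1=K K_2^{-2}$. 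The miracle that must occur — and the main obstacle — is that all the terms of top degree $n_0$ in $E_2u$ cancel (so $E_2u\in M_{n_0-1}$ as Lemma \ref{compute}(5) already promises), AND that the surviving degree-$(n_0-1)$ part assembles exactly into $u(n_0-1, l_0, h_{n_0}(K,C_1)Q_{l_0})$ up to the scalar $q^{n_0-j-1}(q^2-1)^{-1}[n_0]$ collected into $b_{kj}(n_0)$, with the factor $h_{n_0}(K,C_1)=q^{3-2n_0}K+q^{2n_0-3}K^{-1}-(q-q^{-1})^2 C_1$ emerging from combining the $[K_2;1-j-k]$ weight term (which contributes powers of $K_1^{\pm1}=(KK_2^{-2})^{\pm1}$) with the $\frac{qK_1+q^{-1}K_1^{-1}}{(q-q^{-1})^2}$ from $F_1v_\eta\to C_1$. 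I expect the precise shift $3-2n_0$ in the exponent to be exactly what forces the bookkeeping: the $K_2$-powers must all collapse (via the $q^{2nl}K_2^l$ homogeneity noted after \eqref{u(n,l,Q)}, i.e. $u(n,l,Q)=q^{2nl}K_2^lu(n,0,Q)$) to leave a clean $K_2^{l_0}$ and $K_2^{2j+1}$ pattern.

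For the reverse direction of (2) I would simply observe that if $u=u(n_0,l_0,Q_{l_0})$ then $(E_1-\a q^{l_0})u\in W(\eta,I)$ follows from the already-verified "only if $u$ is of the form \eqref{u_n_0-2}" half of Lemma \ref{E_1 action} (since $u(n_0,l_0,Q)$ was cooked up precisely to have this shape), and $E_2u\in W(\eta,I)$ follows from the explicit formula for $E_2u$ just derived — indeed $E_2u$ lies in the span of the $F_2^{\cdots}F_3^{\cdots}K_2^{\cdots}\C[K^{\pm1},C_1]v_\eta$ with coefficient $h_{n_0}(K,C_1)Q_{l_0}$, and when $Q_{l_0}\in I$ we get $E_2u\in W(\eta,I)$ by Lemma \ref{W(eta,I)} — wait, more carefully: for the stated equivalence $u\notin W(\eta,I)$ is assumed (as in Lemma \ref{E_1 action}), so the genuine content is the shape of $u$, and $E_2u\in W(\eta,I)$ is then automatic because $E_2$ preserves $W(\eta,I)$ and $E_2u - (\text{stated expression}) \in W(\eta,I)$ would need the stated expression itself to lie in $W(\eta,I)$ — so actually the congruence displayed in the statement is what must be proved, not $E_2u\in W(\eta,I)$ outright. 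So the logical structure is: (forward) from $(E_1-\a q^{l_0})u\in W(\eta,I)$ and $E_2u\in W(\eta,I)$ plus $u\notin W(\eta,I)$, deduce via Lemma \ref{E_1 action} that $u$ has shape \eqref{u_n_0-2}, then the extra constraint $E_2u\in W(\eta,I)$ combined with the computed $E_2u$ pins down $k_0=n_0$ and the recursion to its closed form, giving $u=u(n_0,l_0,Q_{l_0})$; (backward) if $u=u(n_0,l_0,Q_{l_0})$ then both conditions hold by the same computations run in reverse.

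Part (1), the case $n_0=0$, is nearly trivial: here $u=\sum_{l}K_2^l Q_{0,l}v_\eta$, Lemma \ref{E_1 action} (or directly \eqref{E_1u} with $k=0$) gives $(E_1-\a q^{l_0})u=\a\sum_l(q^l-q^{l_0})K_2^lQ_{0,l}v_\eta$, and membership in $W(\eta,I)$ via Lemma \ref{claim} forces $Q_{0,l}\in I$ for $l\neq l_0$, i.e. $u\equiv K_2^{l_0}Q_{l_0}v_\eta$; the $E_2$-condition is then automatic since $E_2$ kills $K_2^{l_0}Q_{l_0}v_\eta$, and conversely $u=K_2^{l_0}Qv_\eta$ obviously satisfies both. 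The main obstacle throughout is the bookkeeping miracle in the $E_2$-computation for $n_0\geq 1$ — tracking how the weight operators $[K_2;\cdot]$ and the Casimir term conspire to produce exactly $h_{n_0}(K,C_1)$ and how the $q$-binomial coefficients $a_{kj}(n_0-1)$ appear with the right prefactor — but since "all assertions can be checked straightforwardly" in the style of Lemma \ref{compute}, the intended proof is surely just a careful (if lengthy) direct verification, which I would organize by first proving $u=u(n_0,l_0,Q_{l_0})$ via the $E_1$-analysis and then computing $E_2u(n_0,l_0,Q_{l_0})$ once and for all.
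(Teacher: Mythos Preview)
Your final-paragraph summary of the logical structure is correct and matches the paper's approach exactly: use Lemma~\ref{E_1 action} to put $u$ in the form \eqref{u_n_0-2}, then impose $E_2u\in W(\eta,I)$ to force $k_0=n_0$ and a recursion on the $Q_{l_0+2i}$, yielding $u=u(n_0,l_0,Q_{l_0})$, and finally read off the $E_2u$ formula. Part~(1) is indeed as trivial as you say.

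However, the middle of your plan contains a genuine confusion that would derail the argument if followed literally. After Lemma~\ref{E_1 action}, the remaining unknowns are \emph{not} ``only $Q_{l_0}$'': the polynomials $Q_{l_0},Q_{l_0+2},\dots,Q_{l_0+2k_0}$ are all independent at that stage. Equation \eqref{Q_k,l} expresses $Q_{k,l_0+2j}$ in terms of $Q_{l_0+2(j+k)}$, i.e.\ it collapses the two-parameter family $(k,j)$ to the one-parameter family $Q_{l_0+2i}$, $0\le i\le k_0$---but it does \emph{not} give a recursion among the $Q_{l_0+2i}$ themselves. The further reduction $Q_{l_0+2i}=\qchoose{n_0}{i}(-q^{n_0-3}K^{-2})^iQ_{l_0}$ (the paper's \eqref{Q_k reduction}) is a genuinely new constraint extracted from the $E_2$-condition, specifically from the $j=-1$ case of the coefficient equations for $E_2u$. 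Only after deriving this recursion does the $q$-binomial identity you cite become relevant for matching against $a_{kj}(n_0)$. So ``routine reindexing'' is not enough; you must actually compute $E_2u$ on the general form \eqref{u_n_0-2} first and derive the recursion from $E_2u\in W(\eta,I)$.

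Separately, your two alternative arguments for $k_0=n_0$ are both wrong. The terms $F_2^{n_0-k}F_3^k$ in \eqref{u_n_0-2} all have total degree $n_0$ regardless of $k_0$, so $k_0<n_0$ does not push $u$ below $M_{n_0}$; and ``replace $n_0$ by $k_0$'' makes no sense since $n_0$ is fixed by the hypothesis $u\in M_{n_0}$. The correct argument---which you do mention first---is the paper's: in the $E_2u$ computation the $k=k_0$ term yields $[n_0-k_0][K_2;1-n_0]P_{k_0}v_\eta\in W(\eta,I)$, and $[n_0-k_0]\neq0$ forces $Q_{l_0+2k_0}\in I$, contradicting $u\notin W(\eta,I)$.
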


\begin{proof} 
The sufficiency can be verified straightforward, though it is quite tedious. We then prove the necessity. 

By Lemma \ref{E_1 action}, we may write $u$ in the form of \eqref{u_n_0-2}. More precisely, we assume that $u=\sum_{k=0}^{k_0}F_2^{n_0-k}F_3^kP_kv_\eta$ 
for some $k_0\leq n_0$, where the polynomial $P_k\in\C[K_2^{\pm1},K^{\pm1}, C_1]$ is given by
$P_k=\sum_{j=0}^{k_0-k}K_2^{l_0+2j}Q_{k,l_0+2j}$, where
\begin{equation}\label{Q_k}
\quad Q_{k,l_0+2j}=q^{\frac{1}{2}k(k-1)}\qchoose{j+k}{k}\big(\a(1-q^2)q^{l_0+j}K\big)^kQ_{0,l_0+2j+2k}.
\end{equation}
As before, we denote $Q_{l_0+2j}=Q_{0,l_0+2j}$ for short and hence $Q_{l_0+2k_0}\notin I$.

\noindent(1) If $n_0=0$, we have $k_0=0$ and $u=K_2^{l_0}Q_{l_0}v_\eta$. Hence $E_2u=0$ always holds.

\noindent(2) Assume $n_0\geq1$ from now on. 
By Lemma \ref{compute}, we compute the application of $E_2$ on $u$
\begin{equation}\label{E_2u}\begin{split}
E_2u%= & \sum_{k=0}^{k_0}E_2F_2^{n_0-k}F_3^kP_kv_\eta\\
  = & \sum_{k=0}^{k_0}[E_2, F_2^{n_0-k}F_3^k]P_kv_\eta+\sum_{k=0}^{k_0}F_2^{n_0-k}F_3^kE_2P_kv_\eta\\
%  = & \sum_{k=0}^{k_0}F_2^{n_0-k-1}F_3^k\frac{q^{3-k}(1-q^{-2(n_0-k)})K_2+q^{k+1}(1-q^{2(n_0-k)})K_2^{-1}}{(1-q^2)^2}P_kv_\eta\\
%    & -\sum_{k=1}^{k_0}\frac{1-q^{-2k}}{1-q^{-2}}F_2^{n_0-k}F_3^{k-1}K_2F_1P_kv_\eta\\
%  = & \sum_{k=0}^{k_0-1}F_2^{n_0-k-1}F_3^k\frac{q^{3-k}(1-q^{-2(n_0-k)})K_2+q^{k+1}(1-q^{2(n_0-k)})K_2^{-1}}{(1-q^2)^2}P_kv_\eta\\
%    & -\sum_{k=0}^{k_0-1}\frac{1-q^{-2k-2}}{1-q^{-2}}F_2^{n_0-k-1}F_3^{k}K_2F_1P_{k+1}v_\eta\\
%  = & \sum_{k=0}^{k_0-1}F_2^{n_0-k-1}F_3^k\Big(\frac{[n_0-k]\big(q^{-n_0+2}K_2-q^{n_0}K_2^{-1}\big)}{q^2-1}P_k-[k+1]q^{-k}K_2F_1P_{k+1}\Big)v_\eta.\\
  = & \sum_{k=0}^{k_0}[n_0-k]F_2^{n_0-k-1}F_3^k[K_2;1-n_0]P_kv_\eta
     -\sum_{k=1}^{k_0}q^{1-k}[k]F_2^{n_0-k}F_3^{k-1}K_2F_1P_kv_\eta\\
  = & \sum_{k=0}^{k_0}F_2^{n_0-k-1}F_3^k\big([n_0-k][K_2;1-n_0]P_k
  -q^{-k}[k+1]K_2F_1P_{k+1}\big)v_\eta,
\end{split}\end{equation}
where $P_{k_0+1}=0$.
By the assumption $E_2u\in W(\eta,I)$ and Lemma \ref{claim}, we see,
\begin{equation}\label{P_k condition}
[n_0-k][K_2;1-n_0]P_k-q^{-k}[k+1]K_2F_1P_{k+1}v_\eta\in W(\eta,I),\ 
\forall\ 0\leq k\leq k_0.
\end{equation}
If $k_0<n_0$, then take $k=k_0$ in the above formula, and using the fact $P_{k_0+1}=0$, we get
$$[n_0-k_0][K_2;1-n_0]P_{k_0}v_\eta\in W(\eta,I),$$
which implies $Q_{l_0+2k_0}\in I$ by Lemma \ref{claim'}, contradicting the fact $Q_{l_0+2k_0}\notin I$.

So we must have $k_0=n_0$ and hence the equation \eqref{P_k condition} can be rewritten as:
%\begin{equation*}\begin{split}
%& \sum_{j=0}^{n_0-k}\a(1-q^{-2})\Big(q^{3-k}(1-q^{-2(n_0-k)})K_2+q^{k+1}(1-q^{2(n_0-k)})K_2^{-1}\Big)K_2^{2j}Q'_{l_0+2j+2k}\\
%& \hskip0.5cm +\sum_{j=0}^{n_0-k-1}\frac{(1-q^{-2k-2})}
%  {q^{l_0+2j}}\Big(q^3K_2^{-2}K+qK_2^2K^{-1}-(q^2-1)^2C_1\Big)K_2^{2j+1}Q'_{l_0+2j+2k+2}\in UI,\\
%\end{split}\end{equation*}
\begin{equation*}\aligned
&[n_0-k][K_2;1-n_0]\sum_{j=0}^{n_0-k}K_2^{l_0+2j}Q_{k,l_0+2j}v_\eta-q^{-k}[k+1]K_2F_1\sum_{j=0}^{n_0-k-1}K_2^{l_0+2j}Q_{k+1,l_0+2j}v_\eta\\
=&[n_0-k][K_2;1-n_0]\sum_{j=0}^{n_0-k}K_2^{l_0+2j}Q_{k,l_0+2j}v_\eta\\
&\hskip1cm-q^{-k}[k+1]\sum_{j=0}^{n_0-k-1}\a^{-1}q^{-l_0-2j}K_2^{l_0+2j+1}F_1E_1Q_{k+1,l_0+2j}v_\eta\\
=&[n_0-k][K_2;1-n_0]\sum_{j=0}^{n_0-k}K_2^{l_0+2j}Q_{k,l_0+2j}v_\eta\\
&\hskip1cm-\a^{-1}q^{-k-l_0}[k+1]\sum_{j=0}^{n_0-k-1}q^{-2j}K_2^{l_0+2j+1}\Big(C_1-\frac{qK_1+q^{-1}K_1^{-1}}{q-q^{-1}}\Big)Q_{k+1,l_0+2j}v_\eta\in W(\eta,I)\\
\endaligned\end{equation*}
%
%\begin{equation*}\begin{split}
%& \a q^{l_0+k}(q^2-1)[n_0-k]K_2^{l_0}\big(q^{-n_0+2}K_2-q^{n_0}K_2^{-1}\big)\sum_{j=0}^{n_0-k}K_2^{2j}Q_{k,l_0+2j}\\
%&\hskip0.5cm -[k+1]K_2^{l_0+1}\big((q^2-1)^2C_1-q^3K_1-qK_1^{-1}\big)\sum_{j=0}^{n_0-k-1}q^{-2j}K_2^{2j}Q_{k+1,l_0+2j}\in UI.
%\end{split}\end{equation*}
for all $0\leq k\leq n_0-1$.  Noticing the fact $K=K_1K_2^2$, we have
%\begin{equation*}\begin{split}
%& \a q^{l_0+k}(q^2-1)[n_0-k]\big(q^{-n_0+2}K_2-q^{n_0}K_2^{-1}\big)\sum_{j=0}^{n_0-k}K_2^{2j}Q'_{l_0+2j+2k}\\
%&\hskip0.5cm -[k+1]\Big((q^2-1)^2K_2C_1-q^3K_2^{-1}K-qK_2^3K^{-1}\Big)\sum_{j=0}^{n_0-k-1}q^{-2j}K_2^{2j}Q'_{l_0+2j+2k+2}\in UI.
%\end{split}\end{equation*}
\begin{equation*}%\label{Q_k,l condition}
\begin{split}
& \a q^{l_0+k}(q^2-1)[n_0-k]\left(\sum_{j=0}^{n_0-k}q^{-n_0+2}K_2^{2j+1}Q_{k,l_0+2j}-\sum_{j=-1}^{n_0-k-1}q^{n_0}K_2^{2j+1}Q_{k,l_0+2j+2}\right)\\
& \hskip0.3cm +[k+1]\left(\sum_{j=-1}^{n_0-k-2}q^{1-2j}K_2^{2j+1}KQ_{k+1,l_0+2j+2}+\sum_{j=1}^{n_0-k}q^{3-2j}K_2^{2j+1}K^{-1}Q_{k+1,l_0+2j-2}\right.\\
& \hskip2.5cm \left.-\sum_{j=0}^{n_0-k-1}(q^2-1)^2q^{-2j}K_2^{2j+1}C_1Q_{k+1,l_0+2j}\right)\in \C[K_2^{\pm1}]I
\end{split}\end{equation*}
for $0\leq k\leq n_0-1$. 

Considering the coefficients of $K_2^{2j+1}$, we obtain that
\begin{equation*}\begin{split}
& \a q^{l_0+k}(q^2-1)[n_0-k]\Big(q^{-n_0+2}Q_{k,l_0+2j}-q^{n_0}Q_{k,l_0+2j+2}\Big)\\
& \hskip1cm +[k+1]\Big(q^{1-2j}KQ_{k+1,l_0+2j+2}+q^{3-2j}K^{-1}Q_{k+1,l_0+2j-2}\\
& \hskip6cm -(q^2-1)^2q^{-2j}C_1Q_{k+1,l_0+2j}\Big)\in I,
\end{split}\end{equation*}
for $0\leq k\leq n_0-1$ and $-1\leq j\leq n_0-k$, where we have made the convention $Q_{k,l_0+2j}=0$ if $j<0$ or $k+j>n_0$. 
Substitute \eqref{Q_k,l} into the above equation, we get
\begin{equation}\label{coeff K_2^j}\begin{split}
&q^{j+k}[n_0-k]\Big(q^{-n_0+2}\qchoose{j+k}{k}Q_{l_0+2j+2k}-q^{n_0+k}\qchoose{j+k+1}{k}Q_{l_0+2j+2k+2}\Big)\\
&\hskip0.3cm -[k+1]\Big(q^{2k+2}\qchoose{j+k+2}{k+1}K^2Q_{l_0+2j+2k+4}+q^{2}\qchoose{j+k}{k+1}Q_{l_0+2j+2k}\\
& \hskip5cm -(q^2-1)^2q^{k}\qchoose{j+k+1}{k+1}C_1KQ_{l_0+2j+2k+2}\Big)\in I,
\end{split}\end{equation}
for all $k=0,\dots,n_0-1$, where $\qchoose{j}{k}=0$ if $j<k$ and $Q_{l_0+2i}=0$ if $i<0$ or $i>n_0$.

Taking $j=-1$ in \eqref{coeff K_2^j}, we get
%\begin{equation*}\begin{split}
%& \a q^{l_0+n_0+k-3}(q^2-1)[n_0-k]Q_{k,l_0}-[k+1]KQ_{k+1,l_0}\in I,\ \forall\ 0\leq k\leq n_0-1,
%\end{split}\end{equation*}
%which give rise to 
\begin{equation*}\begin{split}
& q^{n_0-3}[n_0-k]Q_{l_0+2k}+[k+1]K^2Q_{l_0+2k+2}\in I,\ \forall\ 0\leq k\leq n_0-1.
\end{split}\end{equation*}
Replacing $Q_{l_0+2k}$  with some element in $Q_{l_0+2k}+I$, we may assume that
\begin{equation*}\begin{split}
& q^{n_0-3}[n_0-k]Q_{l_0+2k}+[k+1]K^2Q_{l_0+2k+2}=0,\ \forall\ 0\leq k\leq n_0-1.
\end{split}\end{equation*}
By induction, we can obtain that
\begin{equation}\label{Q_k reduction}\begin{split}
& Q_{l_0+2k}=
\qchoose{n_0}{k}
(-q^{n_0-3}K^{-2})^kQ_{l_0},\ \forall\ 0\leq k\leq n_0.
\end{split}\end{equation}
Substitute the above equation into \eqref{u_n_0-2}, and simplifying it, we get $u=u(n_0,l_0,Q_{l_0})$.
%\begin{equation}\label{u_n_0-3}\begin{split}
%u=&\sum_{k=0}^{n_0}\sum_{j=0}^{n_0-k} q^{\frac{1}{2}k(2j+k-1)}\qchoose{j+k}{k}F_2^{n_0-k}F_3^kK_2^{l_0+2j}\big(\a(1-q^2)q^{l_0}K\big)^k\\
%&\hskip2cm\cdot\qchoose{n_0}{j+k}(-q^{n_0-3}K^{-2})^{j+k}Q_{l_0}v_\eta\\
%=&\sum_{k=0}^{n_0}\a^k(q^2-1)^kq^{\frac{1}{2}k(2n_0+2l_0+k-7)}\qchoose{n_0}{k}F_2^{n_0-k}F_3^k\\
%&\hskip1cm\cdot\sum_{j=0}^{n_0-k}(-1)^jq^{j(n_0+k-3)}\qchoose{n_0-k}{j}K_2^{l_0+2j}
%K^{-2j-k}Q_{l_0}v_\eta\\
%=&\sum_{k=0}^{n_0}\a^k(q^2-1)^kq^{\frac{1}{2}k(2n_0+2l_0+k-7)}q^{\frac{1}{2}(n_0-k)(n_0-k-1)}\qchoose{n_0}{k}F_2^{n_0-k}F_3^k\\
%&\hskip1cm\cdot (-q^{2k-2}K_2^2K^{-2}+1)^{n_0-k}_qK^{-k}K_2^{l_0}Q_{l_0}v_\eta\\
%\end{split}\end{equation}
%where, for the second equality, we have used the identity 
%$$\qchoose{j+k}{k}\qchoose{n_0}{j+k}=\qchoose{n_0-k}{j}\qchoose{n_0}{k},$$ 
%and, for the third equality, we have used the identity \eqref{identity}.
%The element in \eqref{u_n_0-3} is just $u_{n_0,l_0,Q_{l_0}}$.

%If $n_0=0$, then $u_{n_0}=K_2^{l_0}Q_{l_0}$.
%
%If $n_0=1$, then $u_{n_0}=$.
%
%If $n_0=2$, then $u_{n_0}=$.
%
%

At last, substitute \eqref{Q_k reduction} into \eqref{coeff K_2^j}, we obtain 
\begin{equation*}\begin{split}
&q^{j+k}[n_0-k]\Big(q^{-n_0+2}\qchoose{j+k}{k}\qchoose{n_0}{j+k}+q^{2n_0+k-3}\qchoose{j+k+1}{k}\qchoose{n_0}{j+k+1}K^{-2}\Big)Q_{l_0}\\
&\hskip0.3cm -[k+1]\Big(q^{2n_0+2k-4}\qchoose{j+k+2}{k+1}\qchoose{n_0}{j+k+2}K^{-2}+q^{2}\qchoose{j+k}{k+1}\qchoose{n_0}{j+k}\\
& \hskip2.5cm +(q^2-1)^2q^{n_0+k-3}\qchoose{j+k+1}{k+1}\qchoose{n_0}{j+k+1}C_1K^{-1}\Big)Q_{l_0}\in I,
\end{split}\end{equation*}
for all $k=0,\dots,n_0-1$ and $j=-1,0\dots,n_0-k$. 
We can simplify the above formula as
\begin{equation}\label{Q_l condition}\begin{split}
&q^{j+k}\Big(q^{-n_0+2}\qchoose{n_0-k}{j}+q^{2n_0+k-3}\qchoose{n_0-k}{j+1}K^{-2}\Big)Q_{l_0}\\
&\hskip0.3cm -\Big(q^{2n_0+2k-4}\qchoose{n_0-k-1}{j+1}K^{-2}+q^{2}\qchoose{n_0-k-1}{j-1}\\
& \hskip2.5cm +(q^2-1)^2q^{n_0+k-3}\qchoose{n_0-k-1}{j}C_1K^{-1}\Big)Q_{l_0}\in I,
\end{split}\end{equation}
for all $k=0,\dots,n_0-1$ and $j=-1,0\dots,n_0-k$. This further gives
\begin{equation}\label{Q_l condition}\begin{split}
&[j+1]q^2\Big([n_0-k]q^{j+k-n_0}-[j]\Big)Q_{l_0}\\
&\hskip1cm +[n_0-k-j]q^{2n_0+2k-4}\Big([n_0-k]q^{j+1}-[n_0-k-j-1]\Big)K^{-2}Q_{l_0}\\
& \hskip2cm -(q^2-1)^2q^{n_0+k-3}[n_0-k-j][j+1]C_1K^{-1}Q_{l_0}\in I.
\end{split}\end{equation}
Using the identity $[n]q^j-[j]q^n=[n-j]$, we have
\begin{equation}\label{Q_l condition}\begin{split}
&[j+1][n_0-k-j]q^{n_0+k-1}\big(q^{3-2n_0}+q^{2n_0-3}K^{-2}-(q-q^{-1})^2C_1K^{-1}\big)Q_{l_0}\in I,
\end{split}\end{equation}
which is equivalent to 
\begin{equation}\label{Q_l condition}
\big(q^{3-2n_0}K+q^{2n_0-3}K^{-1}-(q-q^{-1})^2C_1\big)Q_{l_0}\in I,
\end{equation}
since $n_0\geq1$. Substitute all these back into \eqref{coeff K_2^j}, we can obtain the
expression of $E_2u$ as in the lemma.
% 
%Taking $j=0$ in \eqref{Q_l condition} ($j=n_0-k-1$ gives the same), we have
%\begin{equation}\label{j=0}\begin{split}
%&q^k\Big(q^{-n_0+2}+q^{2n_0+k-4}(q[n_0-k]-[n_0-k-1])K^{-2}-(q^2-1)^2q^{n_0-3}C_1K^{-1}\Big)Q_{l_0}\\
%=&q^{k+n_0-1}\Big(q^{3-2n_0}+q^{2n_0-3}K^{-2}-(q-q^{-1})^2C_1K^{-1}Q_{l_0}\in UI,
%\end{split}\end{equation}
%where we have used the identities $q[n]-[n-1]=q^n$ and $[n+1]-q[n]=q^{-n}$ for any $n\in\Z$. 
%
\end{proof}

For any $n\in\N$, we denote
$$h_n(K,C_1)=q^{3-2n}K+q^{2n-3}K^{-1}-(q-q^{-1})^2C_1,$$
called {\bf critical polynomials}. 
Any pair $(\kappa,c)\in\C^2, \kappa\neq0$, with $h_n(\kappa,c)=0$ for some $n\in\N$, is called a 
{\bf critical pair};
otherwise $(\kappa,c)$ is called {\bf non-critical}.
%Given any ideal $I\unlhd\C[K^{\pm1}, C_1]$, if $h_n^i\in I$ for some $n,i\in\N$, then $I$ is called 
%a {\bf critical ideal}; otherwise, $I$ is called {\bf non-critical}.

%For any ideal $I\unlhd\C[K^{\pm1}, C_1]$ and $i,n\in\N$, denote 
%$$\hat I^{(i)}_n=\{Q\in\C[K^{\pm1},C_1]\ \big|\ h_n^i(K,C_1)Q\in I\},$$
%It is clear that all $\hat I^{(i)}_n, i,n\in \N$, are ideals of $\C[K^{\pm1}, C_1]$.
%
%Set $\hat I_n=\bigcup_{i\in\N}\hat I^{(i)}_n$ and $\hat I=\sum_{n\in\N}\hat I_n$, which are also ideals of 
%$\C[K^{\pm1},C_1]$. Noticing $\hat I^{(i)}_n\subseteq \hat I^{(i+1)}_n$, we see $\hat I_n=\hat I^{(i)}_n$ for some $i\in\N$ since $\C[K^{\pm1}, C_1]$ is Noetherian. 
%The ideal $I$ is called {\bf non-critical} if $I=\hat I$ and is called {\bf critical} otherwise. 
%In particular, $\hat I$ is non-critical for any ideal $I$.
%%we have %$\hat{\hat I}_n=\hat I_n$ and $\hat{\hat I}=\hat I$. 

For any ideal $I\unlhd\C[K^{\pm1}, C_1]$ and $n\in\N$, denote 
$$\hat I_n=\{Q\in\C[K^{\pm1},C_1]\ \big|\ h_n(K,C_1)Q\in I\},$$
which is again an ideal of $\C[K^{\pm1}, C_1]$.
An ideal $I$ is called {\bf non-critical} if $I=\hat I_n$ for any $n\in\N$ and is called {\bf critical} otherwise. Denote by $\hat I$ the minimal non-critical ideal containing $I$. 
Clearly $\hat I_n\subseteq\hat I$ and the pair $(\k,c)$ is critical if and only if the maximal ideal of 
$\C[K^{\pm1},C_1]$ generated by $K-\k, C_1-c$ is critical.

\begin{theorem}\label{vector} 
%The Whittaker module $V(\eta, I)$ has Whittaker vectors of type $(\a q^l, 0)$ if and only if $\hat I\neq I$ and in this case the set of 
For any $l\in\Z$, the set of all Whittaker vectors of type $(\a q^l, 0)$ in the 
Whittaker module $V(\eta, I)=M(\eta)/W(\eta,I)$ is 
$$\spn\{K_2^l\C[K^{\pm1}, C_1]\bar v_\eta\}\cup \{\bar u(n,l,Q)\ |\ n\in\N, Q\in \hat I_n\setminus I\},$$ 
where $\bar v_\eta, \bar u(n,l,Q)\in V(\eta, I)$ are the images of the elements $v_\eta$ and $u(n,l,Q)$ respectively. In particular, the set of Whittaker vectors of type $(\a q^l, 0)$ in $M(\eta)$ is 
$K_2^l\C[K^{\pm1}, C_1]v_\eta$.
\end{theorem}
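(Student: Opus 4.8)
The plan is to reduce the problem about Whittaker vectors to the two structural lemmas already proved. A vector $\bar w\in V(\eta,I)$ is Whittaker of type $(\a q^l,0)$ precisely when $(E_1-\a q^l)w\in W(\eta,I)$ and $E_2 w\in W(\eta,I)$. So I would take a general element $w\in M(\eta)$, lift it via the PBW basis of Lemma~\ref{claim} to a finite sum $w=\sum_{n\ge 0} u_n$ with $u_n\in M_n$ (collecting terms of fixed total degree $n=j+k$), and exploit Lemma~\ref{compute}(5): $E_1 M_n\subseteq M_n$ and $E_2 M_n\subseteq M_{n-1}$. Because the filtration degrees of $E_1w-\a q^l w$ and $E_2w$ are controlled, and because $W(\eta,I)=\bigoplus F_2^jF_3^kK_2^l I v_\eta$ is a graded submodule for this decomposition (Lemma~\ref{W(eta,I)}), the two membership conditions must hold homogeneous-component by homogeneous-component. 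This lets me assume $w=u_{n_0}\in M_{n_0}$ is homogeneous of a single degree $n_0$, reducing to exactly the hypotheses of Lemmas~\ref{E_1 action} and~\ref{E_2 action}.

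Next I would apply Lemma~\ref{E_2 action}: for $w=u_{n_0}\notin W(\eta,I)$, the conditions $(E_1-\a q^l)u_{n_0}, E_2 u_{n_0}\in W(\eta,I)$ force $n_0=0$ with $u_0=K_2^l Q v_\eta$ for some $Q\in\C[K^{\pm1},C_1]$, or $n_0\ge 1$ with $u_{n_0}=u(n_0,l,Q_l)$ and moreover $E_2 u(n_0,l,Q_l)$ lies in $W(\eta,I)$ only when the displayed $\mod W(\eta,I)$ expression vanishes, i.e.\ when $h_{n_0}(K,C_1)Q_l\in I$; by Lemma~\ref{claim'} (applied to $N=W(\eta,I)$, which is $E_1$-stable) this is equivalent to $Q_l\in\hat I_{n_0}$. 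Combined with $u_{n_0}\notin W(\eta,I)$, which by Lemma~\ref{claim} means $Q_l\notin I$, this gives exactly the vectors $\bar u(n,l,Q)$ with $Q\in\hat I_n\setminus I$. Conversely, the sufficiency halves of Lemmas~\ref{E_1 action} and~\ref{E_2 action} show each listed vector is genuinely Whittaker, and the degree-$0$ vectors $K_2^l\C[K^{\pm1},C_1]\bar v_\eta$ are Whittaker because $E_1 K_2^l Qv_\eta=\a q^l K_2^l Qv_\eta$ and $E_2 K_2^l Qv_\eta=0$ directly. Finally, a general Whittaker vector is a sum of its homogeneous pieces, each of which is separately Whittaker (or in $W(\eta,I)$) by the grading argument, so the full set of Whittaker vectors is the span of the homogeneous ones, giving the stated span.

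The last sentence, that the Whittaker vectors of type $(\a q^l,0)$ in $M(\eta)$ itself form $K_2^l\C[K^{\pm1},C_1]v_\eta$, then follows by specializing to $I=0$: here $\hat I_n=0$ for every $n$ (nothing nonzero is killed by $h_n(K,C_1)$ in the integral domain $\C[K^{\pm1},C_1]$), so $\hat I_n\setminus I=\emptyset$ and only the degree-$0$ family survives.

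The main obstacle I anticipate is the bookkeeping in the grading reduction: one must check carefully that the two conditions $(E_1-\a q^l)w\in W(\eta,I)$ and $E_2w\in W(\eta,I)$ really do decouple across the homogeneous components $u_n$. Since $E_1$ preserves each $M_n$ this is immediate for the first condition, but for $E_2$ one has $E_2 u_n\in M_{n-1}$, so the vanishing of $E_2w$ mod $W(\eta,I)$ links consecutive degrees; one has to argue from the top degree downward (or observe that each $u_n$ must individually satisfy $E_2 u_n\in W(\eta,I)$ because $E_1$-stability of $W(\eta,I)$ plus Lemma~\ref{claim'} lets one separate the $K_2$-powers, and the degree filtration is compatible). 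Handling that interaction cleanly — rather than the computations inside Lemmas~\ref{E_1 action} and~\ref{E_2 action}, which are already done — is where the care is needed.
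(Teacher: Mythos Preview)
Your approach is correct and matches what the paper intends: the paper gives no separate proof of Theorem~\ref{vector}, treating it as an immediate consequence of Lemmas~\ref{E_1 action} and~\ref{E_2 action}, and your outline carries out exactly that deduction. The obstacle you flag at the end is not real: since $M(\eta)=\bigoplus_{n\ge 0} M_n$ is a genuine direct sum and $W(\eta,I)=\bigoplus_n (W(\eta,I)\cap M_n)$ by Lemma~\ref{W(eta,I)}, the condition $E_2w\in W(\eta,I)$ together with $E_2u_n\in M_{n-1}$ forces $E_2u_n\in W(\eta,I)$ for each $n$ individually---each $E_2u_n$ lands in a single graded piece $M_{n-1}$, so there is no linking of consecutive degrees, and the decoupling is automatic.
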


\begin{remark}\label{vector gen mod}
In light of Theorem \ref{vector}, we see that $W(\eta,I)$ is just the submodule of $M(\eta)$ generated by  $I v_\eta$, a subset of Whittaker vectors of type $(\a,0)$ in $M(\eta)$. On the other hand, all such submodules are of the form $W(\eta,I)$ for some $I\ideal\C[K^{\pm1},C_1]$.
\end{remark}

For any $U$-submodule $N\subseteq M(\eta)$, set 
$$I(N)=\{Q\in\C[K^{\pm1},C_1]\ |\ Qv_\eta\in N\}.$$
Then $I(N)$ is an ideal of $\C[K^{\pm1}, C_1]$ and $W(\eta,I(N))$ is a submodule of $N$.

\begin{proposition}\label{submodule}
Suppose that $N$ is a submodule of $M(\eta)$ and $I=I(N)$ is non-critical, then $N=W(\eta, I)$.
%Then we have 
%$W(\eta,I)\subseteq N\subseteq W(\eta,\hat I)$. In particular, 
%$N=W(\eta, I)$ if $I(N)$ is non-critical.
\end{proposition}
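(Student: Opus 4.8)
The plan is to show the reverse inclusion $N\subseteq W(\eta,I)$, the inclusion $W(\eta,I)\subseteq N$ being immediate since $I=I(N)$ means $Iv_\eta\subseteq N$ and hence $UIv_\eta=W(\eta,I)\subseteq N$. So take an arbitrary $w\in N$, $w\notin W(\eta,I)$, and derive a contradiction. Using the filtration $\{M_{j,k,l}\}$ (equivalently the grading by $M_n$), write $w$ in terms of its top-degree part. More precisely, among all elements of $N\setminus W(\eta,I)$ choose $w$ of minimal degree $(j_0,k_0,l_0)$ with respect to the total order on $\Gamma$; replacing $w$ by a scalar multiple we may assume its leading term is $F_2^{j_0}F_3^{k_0}K_2^{l_0}Q_0v_\eta$ for some $Q_0\in\C[K^{\pm1},C_1]\setminus I$. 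Set $n_0=j_0+k_0$.

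First I would reduce to the homogeneous situation. By Lemma \ref{compute}(5) the subspaces $M_n$ are $E_1$- and $E_2$-stable up to lower filtration, so one can work modulo $M'_{n_0}$ and assume $w\in M_{n_0}$ has the form \eqref{u_n_0,1}; moreover, since $N$ is a submodule, applying $E_1$ and $E_2$ keeps us inside $N$. The strategy is then to produce, by acting with elements of $U^+$ and taking suitable $\C$-linear combinations, a Whittaker vector living in $N$ but not in $W(\eta,I)$ — which by Theorem \ref{vector} and the non-criticality of $I$ (so that $\hat I_n=I$ for every $n$, hence no vectors $\bar u(n,l,Q)$ with $Q\in\hat I_n\setminus I$ exist) must actually lie in $K_2^{l}\C[K^{\pm1},C_1]v_\eta\bmod W(\eta,I)$, i.e. in degree $0$; this contradicts minimality of $n_0$ as soon as $n_0\geq 1$. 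When $n_0=0$, Lemma \ref{claim'} applied to the $E_1$-stable space $N$ forces $Q_{0,l}v_\eta\in N$ for each $l$, so $Q_0v_\eta\in N$, i.e. $Q_0\in I(N)=I$, again a contradiction.

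The mechanism for manufacturing a Whittaker vector from $w$ is the combinatorial heart, and it is exactly what Lemmas \ref{E_1 action} and \ref{E_2 action} were built for. The plan is: using $E_1$-action on the top layer and Lemma \ref{E_1 action}, show that modulo $W(\eta,I)$ one may assume $w$ has the restricted shape \eqref{u_n_0-2} after replacing $w$ by $\prod(E_1-\a q^{m})w$ over the finitely many relevant weights $m\neq l_0$ appearing in $w$ (a Vandermonde/partial-fractions argument, as in Lemma \ref{claim'}, isolates a single $E_1$-eigenvalue $\a q^{l_0}$ while staying in $N\setminus W(\eta,I)$ — here one must check the isolated component is still outside $W(\eta,I)$, which holds because the leading coefficient $Q_0\notin I$ is unaffected). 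Then the same trick with $E_2$: since $E_2 w\in N$ and by Lemma \ref{compute}(5) lies in $M_{n_0-1}$, minimality of $n_0$ forces $E_2w\in W(\eta,I)$. Now $w$ satisfies the hypotheses of Lemma \ref{E_2 action}: $(E_1-\a q^{l_0})w\in W(\eta,I)$ and $E_2 w\in W(\eta,I)$. If $n_0\geq 1$, Lemma \ref{E_2 action}(2) yields $w=u(n_0,l_0,Q_{l_0})$ with $h_{n_0}(K,C_1)Q_{l_0}\in I$, i.e. $Q_{l_0}\in\hat I_{n_0}$; but $I$ non-critical gives $\hat I_{n_0}=I$, so $Q_{l_0}\in I$, whence by Lemma \ref{W(eta,I)} (the explicit basis of $W(\eta,I)$) we get $u(n_0,l_0,Q_{l_0})\in W(\eta,I)$ — contradicting $w\notin W(\eta,I)$. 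Hence $n_0=0$, and the $n_0=0$ case above finishes the argument.

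The main obstacle, as I see it, is bookkeeping rather than a conceptual leap: one has to be careful that each reduction step (killing off unwanted $E_1$-weights, passing to the homogeneous leading layer, normalizing coefficients mod $I$) genuinely keeps the element inside $N\setminus W(\eta,I)$, and in particular that the non-critical hypothesis is invoked at the precise moment it is needed — namely to rule out the ``extra'' Whittaker vectors $\bar u(n,l,Q)$ in Theorem \ref{vector}. A secondary subtlety is that $\C[K^{\pm1},C_1]$ is not a PID, so when I say ``replace $Q_{l_0+2k}$ by an element of its coset mod $I$'' I am only rearranging within $W(\eta,I)$, which is legitimate precisely because of Lemma \ref{claim}; this is why the whole argument is phrased modulo $W(\eta,I)$ throughout. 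Once these points are handled, the descent on $n_0$ closes the proof.
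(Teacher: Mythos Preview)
Your proposal is correct and follows the paper's argument: take $w\in N\setminus W(\eta,I)$ of minimal degree $(j_0,k_0,l_0)$, use minimality to put $(E_1-\a q^{l_0})w$ and $E_2w$ into $W(\eta,I)$, pass to the homogeneous $M_{n_0}$-component $u$ (via Lemma~\ref{claim} and Lemma~\ref{compute}(5)), apply Lemma~\ref{E_2 action} to obtain $u\equiv u(n_0,l_0,Q_{l_0})\bmod W(\eta,I)$ with $Q_{l_0}\in\hat I_{n_0}=I$, and contradict $Q_{j_0,k_0,l_0}\notin I$. One remark: your Vandermonde detour is unnecessary, since Lemma~\ref{compute}(4) already gives $(E_1-\a q^{l_0})w\in M'_{j_0,k_0,l_0}\cap N$, so minimality of the full triple $(j_0,k_0,l_0)$ forces $(E_1-\a q^{l_0})w\in W(\eta,I)$ directly---this is how the paper proceeds, and it sidesteps the bookkeeping worry you flag about staying in $N\setminus W(\eta,I)$ through the product; also note the final contradiction is with the \emph{leading coefficient} (traced back to $I$ via \eqref{Q_k,l} and \eqref{Q_k reduction}), not with $w\in W(\eta,I)$, since the homogeneous piece $u$ need not lie in $N$.
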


%\begin{lemma} Suppose that $N$ is a $U$-submodule of $M(\eta)$.
%For any nonzero element $u\in N$, there exists $\bs\in\Gamma$ and
%$P\in\C[K^{\pm1}, C_1]$ such that
%$$u=\sum_{(j,k,l)\in\Gamma}F_{2}^{j}F_3^{k}K_2^{l}Q_{j,k,l}w\mod M'_{\bs}.$$
%Then we have $Q_{j,k,l}w\in N$.
%\end{lemma}

\begin{proof} It is clear $W(\eta, I)\subseteq N$. We now prove $N\subseteq W(\eta, I)$.

Suppose on the contrary that $N\not\subseteq W(\eta, I)$.
Take $w\in N\setminus W(\eta, I)$ with minimal degree.
%We can write $w=\sum_{n=0}^{n_0}u_n$ with $u_n\in M_n$ and $u_{n_0}\notin W(\eta,I)$.
Set $\deg(w)=(j_0,k_0,l_0)\in\Gamma$ and $n_0=j_0+k_0$.
Suppose
\begin{equation*}
w=\sum_{(j,k,l)\leq(j_0,k_0,l_0)}F_2^{j}F_3^{k}K_2^{l}Q_{j,k,l}v_\eta,
\end{equation*}
with $Q_{j_0,k_0,l_0}\notin I$.
If $n_0=0$, we have $Q_{0,0,l_0}v_\eta\in N$ by Lemma \ref{claim'} and hence $Q_{0,0,l_0}\in I$, contradiction. Thus $n_0\geq1$.

%%Recall from \eqref{E_1u} that for any $(j,k,l)\in\Gamma$,
%%\begin{equation}\label{E_1'}
%%  E_1F_2^{j}F_3^{k}K_2^{l}Q_{j,k,l}v_\eta=[k]F_2^{j+1}F_3^{k-1}K_2^{l+2}K^{-1}Q_{j,k,l}v_\eta +\a q^{l}F_2^{j}F_3^{k}K_2^{l}Q_{j,k,l}v_\eta.
%%  \end{equation}
%%\begin{equation*}\begin{split}
%%E_1u \equiv &[E_1,F_2^{j'}F_3^{k'}]K_2^lQ_{j',k',l'}w+F_2^{j'}F_3^kE_1K_2^lQ_{j,k,l}w\mod M'_{j,k,l}\\
%%       \equiv &\frac{q^{-k}-q^{k}}{q^{-1}-q}F_2^{j+1}F_3^{k-1}K_2^{l+2}K^{-1}Q_{j,k,l}w +\a q^lF_2^{j}F_3^{k}K_2^lQ_{j,k,l}w\mod M'_{j,k,l}\\
%%\end{split}\end{equation*}
%Denote $S=\{l\in\Z_+\ |\ Q_{j_0,k_0,l}\notin I\}$ which is finite. Without loss of generality, we assume $Q_{j_0,k_0,l}=0$ for $l\notin S$. There exists $s\in\Z_+$ such that
%$$S=\{l_0,l_1,\dots,l_s\}\ \text{with}\ l_0> l_1>\dots>l_s.$$ 
%If $s\geq 1$, then we have
%\begin{equation}\label{E_1-alpha}\begin{split}
%(E_1-\a q^{l_0})w
%   \equiv &\sum_{l\in S}[k_0]F_2^{j_0+1}F_3^{k_0-1}K_2^{l+2}K^{-1}Q_{j_0,k_0,l}v_\eta+\a\sum_{l\in S}(q^{l}-q^{l_0})F_2^{j_0}F_3^{k_0}K_2^{l}Q_{j_0,k_0,l}v_\eta\\
%   \equiv & \a(q^{l_1}-q^{l_0})F_2^{j_0}F_3^{k_0}K_2^{l_1}Q_{j_0,k_0,l_1}v_\eta\mod M'(j_0,k_0,l_1).
%\end{split}\end{equation}
%By the choice of $w$, we have $Q_{j_0,k_0,l_1}\in I$, contradiction.
%
%So we have $s=0$. Set $j_0+k_0=n_0$ for convenience. 
Write $w=u+u'$, where $u\in M_{n_0}$ %can be written in the form of \eqref{u_n_0,1} as in Lemma \ref{E_1 action} 
and $u'\in M'_{n_0}$. %Note that $Q_{k,l}=Q_{n_0-k,k,l}$ in \eqref{u_n_0,1}.
Noticing $\deg((E_1-\a q^{l_0})w)<\deg(w)$ and $\deg(E_2w)<\deg(w)$ by Lemma \ref{compute} (4),
we have $(E_1-\a q^{l_0})w, E_2w\in W(\eta, I)$ by the choice of $w$. Then
%This implies that $(E_1-\a q^{l_0})u, E_2u\in W(\eta, I)$. 
%If we write $u=u_{n_0}+u'$ for some $u_{n_0}\in M_{n_0}$ and $u'\in M'_{n_0}$, 
%then
$$(E_1-\a q^{l_0})w=(E_1-\a q^{l_0})u+(E_1-\a q^{l_0})u'\ \text{and}\ E_2w=E_2u+E_2u',$$
imply $(E_1-\a q^{l_0})u_{n_0}, E_2u_{n_0}\in W(\eta, I)$ by Lemma \ref{compute} (5) and Lemma \ref{claim}.

By Lemma \ref{E_2 action},
we have $u_{n_0}=u(n_0,l_0,Q)$, where $Q=Q_{n_0,0,l_0}$, which corresponds to the polynomial $Q_{l_0}=Q_{0,l_0}$ in Lemma \ref{E_1 action} \eqref{u_n_0,1} if we write $u$ in the form of \eqref{u_n_0,1}, lies in $\hat I_{n_0}=I$. By \eqref{Q_k,l condition} in the proof of Lemma \ref{E_1 action} and 
\eqref{Q_k reduction} in Lemma \ref{E_2 action}, we see that $Q_{j_0,k_0,l_0}$, which corresponds to the polynomial $Q_{k_0,l_0}$
in \eqref{u_n_0,1}, lies in $I$, again contradiction. This completes the proof.
\end{proof}

For any complex number $\kappa, c\in\C, \k\neq0$, we denote by $J(\kappa,c)$ the maximal ideal of $\C[K^{\pm1},C_1]$ generated by $K-\k, C_1-c$. Set moreover $W(\eta;\kappa,c)=W(\eta, J(\kappa,c))$ and $V(\eta;\kappa,c)=M(\eta)/W(\eta; \k,c))$, which has a basis $\{F_2^jF_3^kK_2^l\bar v_\eta\ |\ (j,k,l)\in\Gamma\}$, where $\bar v_\eta$ is the image of $v_\eta$ in $V(\eta; \k,c)$. 
%We call the pair $(\kappa, c)$ critical if $q^{3-2n}\lambda+q^{2n-3}\kappa^{-1}-(q-q^{-1})^2c\neq0$ for some $n\in\N$; otherwise, $(\kappa, c)$ is called non-critical. 
Recall that $J(\k,c)$ is critical if and only if $(\k,c)$ is critical.

\begin{theorem}\label{non-critical}
Suppose $\k, c\in\C$ with $\k\neq0$. Then $W(\eta; \k, c)$ is a maximal submodule if and only if $(\k,c)$ is non-critical; $V(\eta; \k, c)$ is an irreducible Whittaker module of type $\eta$ if and only if $(\kappa,c)$ is non-critical.
%
%\begin{itemize}
%\item[(1)] If $(\kappa,c)$ is non-critical, then $W(\eta;\kappa,c)$ is a maximal submodule of $M(\eta)$ and $V(\eta;\kappa,c)$ is an irreducible Whittaker quotient module of $M(\eta)$.
%\item[(2)] If $(\kappa,c)$ is critical and there is only one $n_0\in\N$ such that 
%$q^{3-2n_0}\kappa+q^{2n_0-3}\kappa^{-1}-(q-q^{-1})^2c=0$, then $W(\eta;\kappa,c)$ is not a maximal submodule.
%\item[(3)] If $(\kappa,c)$ is critical and there are different $n_1,n_2\in\N$ such that 
%$q^{3-2n_i}\kappa+q^{2n_i-3}\kappa^{-1}-(q-q^{-1})^2c=0, i=1,2$, then $W(\eta;\kappa,c)$ is not a maximal submodule.
%\end{itemize}
\end{theorem}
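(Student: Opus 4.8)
\textbf{Proof proposal for Theorem \ref{non-critical}.}
The two statements are equivalent by Proposition \ref{submodule} together with the Whittaker vector count in Theorem \ref{vector}: $V(\eta;\k,c)$ is irreducible if and only if $W(\eta;\k,c)$ is a maximal submodule of $M(\eta)$, so it suffices to prove the first assertion. The plan is to treat the two implications separately. For the non-critical direction, suppose $(\k,c)$ is non-critical, so that $J=J(\k,c)$ is a non-critical ideal, and let $N$ be any submodule of $M(\eta)$ with $W(\eta;\k,c)\subsetneq N$. I would first observe that $I(N)\supseteq J$, and since $J$ is maximal in $\C[K^{\pm1},C_1]$ we have either $I(N)=J$ or $I(N)=\C[K^{\pm1},C_1]$. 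In the latter case $v_\eta\in N$, so $N=M(\eta)$ and we are done. In the former case $I(N)=J$ is non-critical, so Proposition \ref{submodule} gives $N=W(\eta,J)=W(\eta;\k,c)$, contradicting $W(\eta;\k,c)\subsetneq N$. Hence $W(\eta;\k,c)$ is maximal.

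For the converse, suppose $(\k,c)$ is critical, so $h_n(\k,c)=0$ for some $n\in\N$; I must exhibit a submodule strictly between $W(\eta;\k,c)$ and $M(\eta)$. The natural candidate is $N=W(\eta;\k,c)+Uu(n,0,Q)v_\eta$ for a suitable $Q\in\C[K^{\pm1},C_1]$ with $Q\notin J$ but $h_n(K,C_1)Q\in J$ — for instance one can take $Q$ to be (a polynomial representative of) an element of $\hat J_n\setminus J$, which is nonempty precisely because $J$ is critical. By Theorem \ref{vector} (applied with $l=0$), the image $\bar u(n,0,Q)$ is a nonzero Whittaker vector of type $(\a,0)$ in $V(\eta;\k,c)$ that does not lie in $\C[K^{\pm1},C_1]\bar v_\eta$, so it is not a scalar multiple of $\bar v_\eta$; in particular $u(n,0,Q)\notin W(\eta;\k,c)$, whence $W(\eta;\k,c)\subsetneq N$. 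It remains to check $N\neq M(\eta)$, equivalently that $v_\eta\notin N$. Here I would use the filtration by the $M_{j,k,l}$: the submodule $Uu(n,0,Q)v_\eta$ is spanned by elements obtained by acting with PBW monomials in $F_1,F_2,E_1,E_2,K_i^{\pm1}$ on $u(n,0,Q)v_\eta$, and using Lemma \ref{compute} one sees that lowering operators only increase the $M_n$-degree while $E_1,E_2$ act on the top layer $M_{n_0}$ through the formulas in Lemma \ref{E_1 action} and Lemma \ref{E_2 action}. Since $u(n,0,Q)$ already lives at level $n\geq1$ in the $M_\bullet$-filtration and $h_n(K,C_1)Q\in J$, the computation in Lemma \ref{E_2 action} shows that every element of $Uu(n,0,Q)v_\eta$ of level $0$ lies in $\C[K^{\pm1},C_1]\,h_n(K,C_1)Q\,v_\eta\subseteq W(\eta;\k,c)$, so $N\cap M_0\subseteq W(\eta;\k,c)\cap M_0$, which does not contain $v_\eta$.

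The main obstacle is the final step — verifying $v_\eta\notin N$ in the critical case. Showing that no combination of lowering and raising operators applied to $u(n,0,Q)v_\eta$ can produce $v_\eta$ requires controlling the action carefully on the graded pieces $M_n$; the key point, provided by Lemma \ref{E_1 action} and Lemma \ref{E_2 action}, is that the ``descent'' from level $n_0$ to level $0$ of anything built from $u(n,0,Q)$ is forced, up to elements of $W(\eta;\k,c)$, to be divisible by the critical polynomial $h_n(K,C_1)$, which annihilates $\bar v_\eta$ in $\C[K^{\pm1},C_1]/J$ but $v_\eta$ itself is not in that image. Once this divisibility is established, the strict inclusions $W(\eta;\k,c)\subsetneq N\subsetneq M(\eta)$ follow and the theorem is proved; the equivalence with irreducibility of $V(\eta;\k,c)$ is then immediate from the correspondence between maximal submodules of $M(\eta)$ and irreducible quotients.
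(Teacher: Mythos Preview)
Your argument for the non-critical direction is exactly the paper's: containment $J\subseteq I(N)$ plus maximality of $J$ forces $I(N)=J$, and then Proposition~\ref{submodule} gives $N=W(\eta;\k,c)$. For the critical direction you also follow the paper's construction (the paper simply takes $Q=1$, which works since $J(\k,c)$ is maximal and $h_n(\k,c)=0$ means $1\in\hat J_n\setminus J$), but your justification of properness is both more complicated than necessary and not quite supported by the lemmas you cite.

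The issue is your claim that ``every element of $Uu(n,0,Q)$ of level $0$ lies in $\C[K^{\pm1},C_1]\,h_n(K,C_1)Q\,v_\eta$''. Lemma~\ref{E_2 action} only computes $E_2u(n,0,Q)$, which lands in level $n-1$, not level $0$; iterating would bring in further factors $h_{n-1},h_{n-2},\dots$ rather than a single $h_n$, and you would also have to handle mixed words in $E_1,E_2,F_1,F_2$. None of this is needed. Once you know (as you already do) that $\bar u(n,0,Q)$ is a Whittaker vector in $V(\eta;\k,c)$, the raising operators $E_1,E_2$ act by the scalars $\a,0$, so $U\bar u(n,0,Q)=U^-U^0\bar u(n,0,Q)$. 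Now $U^-U^0$ is spanned by monomials $F_3^aF_2^bF_1^cK_1^dK_2^e$, and by Lemma~\ref{compute}(1) together with $F_1v_\eta\in M_0$ one checks that $F_1$ (and of course $K_1,K_2$) preserves each graded piece $M_m$ while $F_2,F_3$ strictly raise $m$. Hence $U\bar u(n,0,Q)\subseteq\bigoplus_{m\ge n}\bar M_m$, which does not contain $\bar v_\eta\in\bar M_0$. This is what the paper has in mind when it asserts properness in one line. (Minor note: you write $u(n,0,Q)v_\eta$ several times; by definition $u(n,0,Q)$ already contains the factor $v_\eta$.)
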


\begin{proof} Denote $J=J(\kappa,c)$.
Suppose that $(\k,c)$ and hence $J$ are non-critical. Let $N$ be a proper submodule containing 
$W(\eta;\kappa,c)=W(\eta;J)$, then $J\subseteq I(N)$, forcing $J=I(N)$. 
By Proposition \ref{submodule}, we have $N=W(\eta;\kappa,c)$.
So $W(\eta;\k,c)$ is maximal and $V(\eta;\k,c)$ is irreducible.

Now suppose that $(\k,c)$ is critical, and take $n\in\N$ such that $h_{n}(\k,c)=0$. 
By Lemma \ref{E_2 action}, we have $(E_1-\a)u(n,0,1)=E_2u(n,0,1)=0$, where $1\in\hat J_{n}\setminus J$ is the constant polynomial in $\C[K^{\pm1},C_1]$. 
Then $Uu(n,0,1)+W(\eta; \k,c)$ is a proper submodule of $M(\eta)$ properly containing $W(\eta;\k,c)$,
which is hence not maximal.
%
%Now suppose $q^{3-2n_1}\kappa+q^{2n_1-3}\kappa^{-1}-(q-q^{-1})^2\mu=0$ for some $n_1\in\N$. 
%If there exists another integer $n_2\in\N$ such that 
%$q^{3-2n_2}\kappa+q^{2n_2-3}\kappa^{-1}-(q-q^{-1})^2c=0$,
%then we must have $q^{3-2n_2}\kappa=q^{2n_1-3}\kappa^{-1}$, that is, $\kappa=\pm q^{(n_1+n_2)-3}$.
%
%(2) In this case, we have $q^{3-2n}\kappa+q^{2n-3}\kappa^{-1}-(q-q^{-1})^2c=0$ if and only if $n=n_0$.
%Then by Lemma we have $E_1u_{n_1,1}=E_2u_{n_1,1}=0$, where $1$ means the constant polynomial in $\C[K^{\pm1},C_1]$. 
%Then $Uu_{n_1,1}+W(\l,\mu)$ is a proper submodule of $M(\eta)$.
%
%(2) In this case, we have $q^{3-2n}\kappa+q^{2n-3}\kappa^{-1}-(q-q^{-1})^2c=0$ if and only if $n=n_1$ or $n=n_2$.
%Then by Lemma we have $E_1u_{n_i,1}=E_2u_{n_i,1}=0$ for $i=1,2$, where $1$ means the constant polynomial in $\C[K^{\pm1},C_1]$. 
%Then $Uu_{n_1,1}+Uu_{n_2,1}+W(\l,\mu)$ is a proper submodule of $M(\eta)$.
\end{proof}

\section{maximal submodules and critical irreducible Whittaker modules}

We further discuss the submodule structure and determine their irreducible quotients of $V(\eta; \kappa,c)$
in critical case. We always assume that $(\kappa, c), \kappa\neq0$, is critical in this section, except for Theorem \ref{simple} and Theorem \ref{maximal}.

There exists $n\in\N$ such that $h_{n}(\k,c)=q^{3-2n}\k+q^{2n-3}\k^{-1}-(q-q^{-1})^2c=0$. 
If there exists another integer $n'\in\N$ such that $h_{n'}(\k,c)=0$, we must have $q^{3-2n}\kappa=q^{2n'-3}\kappa^{-1}$, that is, $\kappa=\pm q^{n+n'-3}$. %, forcing $n+n'=3$. 
Since $q$ is generic,
there are at most two positive integers $n$ such that $h_n(\k,c)=0$.
For convenience, we set $$n_+=\max\{n\ |\ h_{n}(\k,c)=0\}\ \text{and}\ n_-=\min\{n\ |\ h_{n}(\k,c)=0\}.$$ 
We have seen $\kappa=\pm q^{n_++n_--3}$ if $n_+\neq n_-$.
Denote $u_\pm=u(n_\pm,0,1)$, where $1\in\C[K^{\pm1},C_1]$ is the constant polynomial. Recall that $u(n_\e,l,1)=q^{2n_\e l}K_2^lu_\e$ for $\e\in\{+,-\}$. 

For any $w\in M(\eta)$, we still use $\bar w$ to denote its image in $V(\eta;\kappa,c)=M(\eta)/W(\eta;\k,c)$. %This would not cause misunderstanding in appropriate context.
Note that $J(\k,c)v_\eta\subseteq\C[K^{\pm1},C_1]v_\eta\subseteq M_{0,0,0}$, so there is a natural filtration on $V(\eta;\k,c)$ inherited from that on $M(\eta)$ (see Section 3) and we can define the degree of an element in $V(\eta;\k,c)$. Namely, $\deg(\bar w)=\deg(w)$ for any $w\in M(\eta)$. 
In particular, 
$$\deg(\bar u(n_\e,l,1))=(n_\e,n_\e,l),\ \forall\ \e\in\{+,-\}.$$

Fix $\e\in\{+,-\}$. By Proposition \ref{E_2 action}, $\bar u_\e$ is a Whittaker vector in $V(\eta;\k,c)$ of type $(\a,0)$ and hence generates a proper submodule of $V(\eta;\k,c)$. 
We denote this submodule by $\tildeW_\e(\eta;\k,c)$ and
its preimage in $M(\eta)$ by $\hatW_\e(\eta;\k,c)$. 
More precisely, 
$$\hatW_\e(\eta;\k,c)=Uu_\e+W(\eta;\k,c)$$
and 
$$\tildeW_\e(\eta;\k,c)=U\bar u_\e=\hatW_\e(\eta;\k,c)/W(\eta;\k,c)\subseteq V(\eta;\k,c).$$
Note that the least degree of nonzero elements in $\tildeW_\e(\eta;\k,c)$ is $(n_\e,n_\e,0)=\deg(\bar u_\e)$.
%The following result is a special case of Theorem \ref{vector}. 
%
%\begin{theorem}\label{vector'} 
%The set of Whittaker vectors of type $(\a q^l, 0)$ in $V(\eta;\k,c)$ is 
%$$\spn\{K_2^l\bar v_\eta, \bar u(n,l,1)\ |\ h_n(\k,c)=0\}
%=\spn\{K_2^l\bar v_\eta, \bar u(n_\pm,l,1)\}.$$
%\end{theorem}

\begin{lemma}\label{F_1 action} For any $n\in\Z_+, l\in\Z$ and $Q\in\C[K^{\pm1},C_1]$, we have
$$\aligned
F_1 u(n,l,Q)%=\a(q-q^{-1})^{2}F_1q^{2nl}K_2^l u(n,0,Q)\\
%=&q^{(2n-1)l}\Big(u(n,0,(q^{n-3}K^{-1}+q^{3-n}K)Q)-u(n,2,q^{-n-1}K^{-1}Q)-u(n,-2,q^{n+1}KQ)\Big)\\
%&-q^{(2n-1)l}\sum_{k=0}^{n}\sum_{j=0}^{n-k}q^{n-2k-2j}a_{kj}(n)F_2^{n-k}F_3^k K_2^{2j}K^{-2j-k}h_n(K,C_1)Q v_\eta\\
=&\a^{-1}(q-q^{-1})^{-2}\Big(u(n,l,q^{-l}(q^{n-3}K^{-1}+q^{3-n}K)Q)\\
&\hskip1.5cm-u(n,l+2,q^{-n-l-1}K^{-1}Q)-u(n,l-2,q^{n-l+1}KQ)\\
&-q^{(2n-1)l}K_2^l\sum_{k=0}^{n}\sum_{j=0}^{n-k}q^{n-2k-2j}a_{kj}(n)F_2^{n-k}F_3^k K_2^{2j}K^{-2j-k}h_n(K,C_1)Q v_\eta\Big).
\endaligned$$
%$$\aligned
%\a(q-q^{-1})^{2}F_1 u(n,l,Q)%=\a(q-q^{-1})^{2}F_1q^{2nl}K_2^l u(n,0,Q)\\
%%=&q^{(2n-1)l}\Big(u(n,0,(q^{n-3}K^{-1}+q^{3-n}K)Q)-u(n,2,q^{-n-1}K^{-1}Q)-u(n,-2,q^{n+1}KQ)\Big)\\
%%&-q^{(2n-1)l}\sum_{k=0}^{n}\sum_{j=0}^{n-k}q^{n-2k-2j}a_{kj}(n)F_2^{n-k}F_3^k K_2^{2j}K^{-2j-k}h_n(K,C_1)Q v_\eta\\
%=&u(n,l,q^{-l}(q^{n-3}K^{-1}+q^{3-n}K)Q)\\
%&-u(n,l+2,q^{-n-l-1}K^{-1}Q)-u(n,l-2,q^{n-l+1}KQ)\\
%&-q^{(2n-1)l}K_2^l\sum_{k=0}^{n}\sum_{j=0}^{n-k}q^{n-2k-2j}a_{kj}(n)F_2^{n-k}F_3^k K_2^{2j}K^{-2j-k}h_n(K,C_1)Q v_\eta\\
%\endaligned$$
\end{lemma}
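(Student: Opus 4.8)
The plan is to compute $F_1u(n,l,Q)$ directly from the explicit formula \eqref{u(n,l,Q)} using the commutation relations in Lemma \ref{compute}(1) and the relations \eqref{E_3 F_3}, then reorganize the resulting sum so that it matches the four terms on the right-hand side. Since $u(n,l,Q)=q^{2nl}K_2^lu(n,0,Q)$ and $F_1K_2^l=q^{-l}K_2^lF_1$ (because $a_{12}=-1$), it suffices to treat $l=0$ and then restore the $K_2^l$-factor with the correct power of $q$; the powers $q^{2nl}$, $q^{\mp 2n}$ in the shifted arguments, and the prefactor $q^{(2n-1)l}$ in the residual term should all fall out of this bookkeeping, so the first step I would carry out is to reduce cleanly to the case $l=0$.

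Next I would push $F_1$ past $F_2^{n-k}F_3^k$ in each summand of $u(n,0,Q)$. By Lemma \ref{compute}(1), $F_1F_2^{n-k}=[n-k]F_2^{n-k-1}F_3+q^{n-k}F_2^{n-k}F_1$, and by \eqref{E_3 F_3} we have $F_1F_3^k=q^{-k}F_3^kF_1$; so $F_1$ acting on $F_2^{n-k}F_3^kK_2^{2j}K^{-2j-k}Qv_\eta$ produces two kinds of terms: one with $F_2^{n-k-1}F_3^{k+1}$ (from the $[n-k]$ piece, after moving the freed $F_3$ to the right past $F_3^k$ — picking up a $q^{?}$ from $F_2F_3=qF_3F_2$, wait, rather $F_3$ commutes with $F_2$ up to the relation $F_2F_3=qF_3F_2$, so reordering $F_3F_2^{n-k-1}$ costs $q^{-(n-k-1)}$), and one with $F_2^{n-k}F_1$. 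For the latter, $F_1$ then hits $K_2^{2j}K^{-2j-k}Qv_\eta$; since $K$ and $Q$ commute with $F_1$ and $K_2^{2j}F_1 = q^{2j}F_1K_2^{2j}$ (again $a_{12}=-1$), and $F_1v_\eta$ is handled by $F_1E_1v_\eta=\a F_1v_\eta$... actually $F_1$ just acts freely, producing $F_2^{n-k}F_3^kK_2^{2j}K^{-2j-k}QF_1v_\eta$, which is NOT of the desired form — so the key trick must be to instead write $F_1 = \a^{-1}F_1E_1$ modulo lower-order corrections coming from $[E_1,F_1] = (K_1-K_1^{-1})/(q-q^{-1})$, or more precisely to recognize $C_1 = F_1E_1 + (qK_1+q^{-1}K_1^{-1})/(q-q^{-1})^2$ and solve for $F_1 \cdot (\a v_\eta) = F_1E_1v_\eta = (C_1 - (qK_1+q^{-1}K_1^{-1})/(q-q^{-1})^2)v_\eta$. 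This is what generates the $h_n(K,C_1)$ term and the two $K^{\pm1}$-shifted terms: rewriting $qK_1+q^{-1}K_1^{-1}$ in terms of $K=K_1K_2^2$ and $K_2$ introduces $K_2^{\pm 2}$ shifts, which is exactly the source of the $u(n,l\pm 2,\,\cdot\,)$ summands.

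Concretely, the step I anticipate as the main obstacle is the combinatorial identity that matches coefficients: after expanding, the coefficient of $F_2^{n-k}F_3^kK_2^{2j}K^{-2j-k}$ (and neighboring shift indices) on the left must be reexpressed as a $\C$-linear combination of $a_{kj}(n)$ with shifted arguments and of $a_{k-1,j}(n)$, $a_{k,j\pm1}(n)$, so that the whole thing collapses into $u(n,l,\cdot)$-terms plus the single explicit $h_n(K,C_1)$-correction. This requires the $q$-binomial identities $\qchoose{n}{k}\qchoose{n-k}{j}$ versus $\qchoose{n}{k+1}\qchoose{n-k-1}{j}$ and $[n-k]\qchoose{n}{k} = [n-k]\cdots$, combined with the defining recursion of $a_{kj}(n)$; I would verify the identity by checking it on the generating-function level or simply by a careful induction on $n$, using that both sides are known explicitly. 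Once the coefficient matching is done, collecting the four groups of terms and reinstating the $q^{2nl}K_2^l$ prefactor (with the shifts $l\mapsto l\pm 2$ absorbing the relevant $q$-powers) yields precisely the stated formula. As a sanity check I would confirm the $n=0$ case, where $u(0,l,Q)=q^0K_2^lQv_\eta$ and the formula should reduce to $F_1K_2^lQv_\eta = q^{-l}K_2^lF_1Qv_\eta$ matched against $\a^{-1}(q-q^{-1})^{-2}$ times $(q^{-3}K^{-1}+q^3K - q^{-1}K^{-1} - qK - h_0(K,C_1))K_2^lQv_\eta$, using $h_0(K,C_1)=q^3K+q^{-3}K^{-1}-(q-q^{-1})^2C_1$ and $C_1v_\eta = (F_1E_1 + (qK_1+q^{-1}K_1^{-1})/(q-q^{-1})^2)v_\eta$.
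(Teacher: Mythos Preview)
Your approach is essentially the same as the paper's: reduce to $l=0$ via $F_1K_2^l=q^{-l}K_2^lF_1$, push $F_1$ through $F_2^{n-k}F_3^k$ using Lemma \ref{compute}(1) and $F_1F_3=q^{-1}F_3F_1$, replace the resulting $F_1$ on $v_\eta$ by $\a^{-1}F_1E_1v_\eta=\a^{-1}\big(C_1-(qKK_2^{-2}+q^{-1}K^{-1}K_2^2)/(q-q^{-1})^2\big)v_\eta$, substitute $C_1=(q-q^{-1})^{-2}\big(q^{3-2n}K+q^{2n-3}K^{-1}-h_n(K,C_1)\big)$, and then check the coefficient identity (the paper does this by a direct simplification of the $a_{kj}(n)$-combinations rather than by induction, but either works). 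One small correction: no reordering of the freed $F_3$ past $F_2^{n-k-1}$ is needed, since Lemma \ref{compute}(1) already gives $F_2^{n-k-1}F_3$ in the correct order, so the $[n-k]$-piece contributes $[n-k]F_2^{n-k-1}F_3^{k+1}$ immediately.
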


\begin{proof} Recall that $u(n,l,Q)=q^{2nl}K_2^lu(n,0,Q)$ and $F_1K_2=q^{-1}K_2F_1$. 
We need only to prove the lemma for $l=0$.%, which turns out to be
%$$\aligned
%F_1 u(n,0,Q)
%=&\a^{-1}(q-q^{-1})^{-2}\Big(u(n,0,(q^{n-3}K^{-1}+q^{3-n}K)Q)\\
%&\hskip1.5cm-u(n,2,q^{-n-1}K^{-1}Q)-u(n,-2,q^{n+1}KQ)\\
%&-\sum_{k=0}^{n}\sum_{j=0}^{n-k}q^{n-2k-2j}a_{kj}(n)F_2^{n-k}F_3^k K_2^{2j}K^{-2j-k}h_n(K,C_1)Q v_\eta\Big).
%\endaligned$$

Applying $F_1$ to $u(n,0,Q)$ (see \eqref{u(n,l,Q)}), we get
$$\aligned
F_1 u(n,0,Q)=&\sum_{k=0}^{n}\sum_{j=0}^{n-k}a_{kj}(n)F_1F_2^{n-k}F_3^k K_2^{2j}K^{-2j-k}Q v_\eta\\
=&\sum_{k=0}^{n}\sum_{j=0}^{n-k}a_{kj}(n)(q^{n-k}F_2^{n-k}F_1+[n-k]F_2^{n-k-1}F_3)F_3^k K_2^{2j}K^{-2j-k}Q v_\eta\\
=&\sum_{k=0}^{n}\sum_{j=0}^{n-k}q^{n-2k-2j}a_{kj}(n)F_2^{n-k}F_3^k K_2^{2j}F_1K^{-2j-k}Q v_\eta\\
&+\sum_{k=0}^{n-1}\sum_{j=0}^{n-k}[n-k]a_{kj}(n)F_2^{n-k-1}F_3^{k+1} K_2^{2j}K^{-2j-k}Q v_\eta\\
=&\sum_{k=0}^{n}\sum_{j=0}^{n-k}\a^{-1}q^{n-2k-2j}a_{kj}(n)F_2^{n-k}F_3^k K_2^{2j}\Big(C_1-\frac{qKK_2^{-2}+q^{-1}K^{-1}K_2^2}{(q-q^{-1})^2}\Big)K^{-2j-k}Q v_\eta\\
&+\sum_{k=1}^{n}\sum_{j=0}^{n-k+1}[n-k+1]a_{k-1,j}(n)F_2^{n-k}F_3^{k} K_2^{2j}K^{-2j-k+1}Q v_\eta,\\
%=&\sum_{k=0}^{n}\sum_{j=0}^{n-k}\a^{-1}q^{n-2k-2j}a_{kj}(n)F_2^{n-k}F_3^k K_2^{2j}C_1K^{-2j-k}Q v_\eta\\
%&-\sum_{k=0}^{n}\sum_{j=0}^{n-k}\a^{-1}q^{n-2k-2j+1}(q-q^{-1})^{-2}a_{kj}(n)F_2^{n-k}F_3^k K_2^{2j-2}K^{1-2j-k}Q v_\eta\\
%&-\sum_{k=0}^{n}\sum_{j=0}^{n-k}\a^{-1}q^{n-2k-2j-1}(q-q^{-1})^{-2}a_{kj}(n)F_2^{n-k}F_3^k K_2^{2j+2}K^{-2j-k-1}Q v_\eta\\
%&+\sum_{k=0}^{n}\sum_{j=0}^{n-k+1}[n-k+1]a_{k-1,j}(n)F_2^{n-k}F_3^{k} K_2^{2j}K^{-2j-k+1}Q v_\eta\\
\endaligned$$
where we used the identity $F_1F_2^m-q^mF_2^mF_1=[m]F_2^{m-1}F_3$ for any $m\in\Z_+$.
%, \quad F_1^mF_2^n-q^{mn}F_2^nF_1^m=[m]F_2^{m-1}F_3.$$

Substituting the formula $C_1=(q-q^{-1})^{-2}\big(q^{3-2n}K+q^{2n-3}K^{-1}-h_n(K,C_1)\big)$ into the above equation, we obtain
$$\aligned
&\a(q-q^{-1})^{2}F_1 u(n,0,Q)+u(n,2,q^{-n-1}K^{-1}Q)+u(n,-2,q^{n+1}KQ)\\
=&\sum_{k=0}^{n}\sum_{j=0}^{n-k}(q-q^{-1})^{2}q^{n-2k-2j}a_{kj}(n)F_2^{n-k}F_3^k K_2^{2j}C_1K^{-2j-k}Q v_\eta\\
&-\sum_{k=0}^{n}\sum_{j=0}^{n-k}(q^{n-2k-2j+1}-q^{n+1-2k})a_{kj}(n)F_2^{n-k}F_3^k K_2^{2j-2}K^{1-2j-k}Q v_\eta\\
&-\sum_{k=0}^{n}\sum_{j=0}^{n-k}(q^{n-2k-2j-1}-q^{-n-1+2k})a_{kj}(n)F_2^{n-k}F_3^k K_2^{2j+2}K^{-2j-k-1}Q v_\eta\\
&+\sum_{k=0}^{n}\sum_{j=0}^{n-k+1}\a(q-q^{-1})^{2}[n-k+1]a_{k-1,j}(n)F_2^{n-k}F_3^{k} K_2^{2j}K^{-2j-k+1}Q v_\eta\\
\endaligned$$$$\aligned
%=%&\sum_{k=0}^{n}\sum_{j=0}^{n-k}(q-q^{-1})^{2}q^{n-2k-2j}a_{kj}(n)F_2^{n-k}F_3^k K_2^{2j}C_1K^{-2j-k}Q v_\eta\\
%&-\sum_{k=0}^{n}\sum_{j=0}^{n-k}q^{n-2k-2j}a_{kj}(n)F_2^{n-k}F_3^k K_2^{2j}h_n(K,C_1)K^{-2j-k}Q v_\eta\\
%&+\sum_{k=0}^{n}\sum_{j=0}^{n-k}q^{n-2k-2j}a_{kj}(n)F_2^{n-k}F_3^k K_2^{2j}(q^{3-2n}K+q^{2n-3}K^{-1})K^{-2j-k}Q v_\eta\\
%&-\sum_{k=0}^{n}\sum_{j=0}^{n-k}(q^{n-2k-2j+1}-q^{n+1-2k})a_{kj}(n)F_2^{n-k}F_3^k K_2^{2j-2}K^{1-2j-k}Q v_\eta\\
%&-\sum_{k=0}^{n}\sum_{j=0}^{n-k}(q^{n-2k-2j-1}-q^{-n-1+2k})a_{kj}(n)F_2^{n-k}F_3^k K_2^{2j+2}K^{-2j-k-1}Q v_\eta\\
%&+\sum_{k=0}^{n}\sum_{j=0}^{n-k+1}\a(q-q^{-1})^{2}[n-k+1]a_{k-1,j}(n)F_2^{n-k}F_3^{k} K_2^{2j}K^{-2j-k+1}Q v_\eta\\
=&-\sum_{k=0}^{n}\sum_{j=0}^{n-k}q^{n-2k-2j}a_{kj}(n)F_2^{n-k}F_3^k K_2^{2j}h_n(K,C_1)K^{-2j-k}Q v_\eta\\
&+\sum_{k=0}^{n}\sum_{j=0}^{n-k}q^{n-2k-2j}a_{kj}(n)F_2^{n-k}F_3^k K_2^{2j}(q^{3-2n}K+q^{2n-3}K^{-1})K^{-2j-k}Q v_\eta\\
&-\sum_{k=0}^{n}\sum_{j=-1}^{n-k-1}(q^{n-2k-2j-1}-q^{n+1-2k})a_{k,j+1}(n)F_2^{n-k}F_3^k K_2^{2j}K^{-1-2j-k}Q v_\eta\\
&-\sum_{k=0}^{n}\sum_{j=1}^{n-k+1}(q^{n-2k-2j+1}-q^{-n-1+2k})a_{k,j-1}(n)F_2^{n-k}F_3^k K_2^{2j}K^{-2j-k+1}Q v_\eta\\
&+\sum_{k=0}^{n}\sum_{j=0}^{n-k+1}\a(q-q^{-1})^{2}[n-k+1]a_{k-1,j}(n)F_2^{n-k}F_3^{k} K_2^{2j}K^{-2j-k+1}Q v_\eta\\
=&\sum_{k=0}^{n}\sum_{j=-1}^{n-k+1}F_2^{n-k}F_3^k K_2^{2j}f_{kj}(K,C_1)K^{-2j-k}Q v_\eta\\
&-\sum_{k=0}^{n}\sum_{j=0}^{n-k}q^{n-2k-2j}a_{kj}(n)F_2^{n-k}F_3^k K_2^{2j}h_n(K,C_1)K^{-2j-k}Q v_\eta\\
\endaligned$$
where $a_{kj}(n)=0$ whenever $k<0,j<0$ or $k+j>n$ and
$$\aligned
f_{kj}(K,C_1)=&a_{kj}(n)q^{n-2k-2j}(q^{3-2n}K+q^{2n-3}K^{-1})-(q^{n-2k-2j-1}-q^{n+1-2k})a_{k,j+1}(n)K^{-1}\\
&-(q^{n-2k-2j+1}-q^{-n-1+2k})a_{k,j-1}(n)K+\a(q-q^{-1})^{2}[n-k+1]a_{k-1,j}(n)K\\
%=&a_{kj}(n)\Big((q^{3-n-2k-2j}K+q^{3n-2k-2j-3}K^{-1})+q^{n+k-3}\frac{[n-k-j]}{[j+1]}(q^{n-2k-2j-1}-q^{n+1-2k})K^{-1}\\
%&+(q^{n-2k-2j+1}-q^{-n-1+2k})q^{3-n-k}\frac{[j]}{[n-k-j+1]}K+(q-q^{-1})q^{3-n-j-k}\frac{[n-k+1][k]}{[n-k-j+1]}K\Big)\\
=&a_{kj}(n)\Big(q^{3n-2k-2j-3}K^{-1}-q^{2n-k-j-3}(q-q^{-1})[n-k-j]K^{-1}\\
&q^{3-n-2k-2j}K+q^{3-n-k-j}(q-q^{-1})\frac{[n-k+1][k]+[n-2k-j+1][j]}{[n-k-j+1]}K\Big)\\
=&a_{kj}(n)(q^{n-3}K^{-1}+q^{3-n}K).\\
\endaligned$$
Combining the above two formulas and noticing $f_{kj}(K,C_1)=0$ for $j=0,n-k+1$, we obtain 
$$\aligned
&\a(q-q^{-1})^{2}F_1 u(n,0,Q)+u(n,2,q^{-n-1}K^{-1}Q)+u(n,-2,q^{n+1}KQ)\\
%=&\sum_{k=0}^{n}\sum_{j=0}^{n-k}a_{kj}(n)F_2^{n-k}F_3^k K_2^{2j}K^{-2j-k}(q^{n-3}K^{-1}+q^{3-n}K)Q v_\eta\\
%&-\sum_{k=0}^{n}\sum_{j=0}^{n-k}q^{n-2k-2j}a_{kj}(n)F_2^{n-k}F_3^k K_2^{2j}h_n(K,C_1)K^{-2j-k}Q v_\eta\\
=&u(n,0,(q^{n-3}K^{-1}+q^{3-n}K)Q)-\sum_{k=0}^{n}\sum_{j=0}^{n-k}q^{n-2k-2j}a_{kj}(n)F_2^{n-k}F_3^k K_2^{2j}K^{-2j-k}h_n(K,C_1)Q v_\eta,\\
\endaligned$$
which is just the desired result for $l=0$.
\end{proof}

\begin{corollary}\label{C_1 act} 
For any $n\in\Z_+, l\in\Z$ and $Q\in\C[C_1,K]$, we have
$$\aligned
C_1 u(n,l,Q)
=&(q-q^{-1})^{-2}\Big(u(n,l,(q^{n-3}K^{-1}+q^{3-n}K)Q)\\
&-q^{2nl}K_2^l\sum_{k=0}^{n}\sum_{j=0}^{n-k}q^{n-2k-2j}a_{kj}(n)F_2^{n-k}F_3^k K_2^{2j}K^{-2j-k}h_n(K,C_1)Q v_\eta\Big).
\endaligned$$
\end{corollary}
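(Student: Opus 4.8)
The plan is to reduce to the case $l=0$ and then express the action of $C_1$ in terms of those of $E_1$, $F_1$ and $K_1^{\pm1}$, all of which are at hand. First I would note that $C_1$ commutes with $K_2$ — indeed $K_2F_1E_1=qF_1K_2E_1=qF_1\cdot q^{-1}E_1K_2=F_1E_1K_2$ and $K_2$ commutes with $K_1^{\pm1}$ — so that, using $u(n,l,Q)=q^{2nl}K_2^lu(n,0,Q)$ together with $q^{2nl}K_2^lu(n,0,Q')=u(n,l,Q')$ for any $Q'$, it is enough to prove the formula at $l=0$ and then hit both sides with $q^{2nl}K_2^l$; this turns $u(n,0,(q^{n-3}K^{-1}+q^{3-n}K)Q)$ into $u(n,l,(q^{n-3}K^{-1}+q^{3-n}K)Q)$ and reproduces the prefactor $q^{2nl}K_2^l$ in front of the double sum in the statement.

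The crucial input is the identity $E_1u(n,0,Q)=\a\,u(n,0,Q)$. I would verify it directly from \eqref{u(n,l,Q)} via Lemma \ref{compute}(2); the cancellation boils down to the $q$-binomial identity $a_{k+1,j-1}(n)[k+1]=\a(1-q^{2j})a_{kj}(n)$. (It is also implicit in the opening lines of the proof of Lemma \ref{E_1 action} taken with $I=0$, since $u(n,0,Q)$ is exactly the element displayed in \eqref{u_n_0-2} for $l_0=0$, $k_0=n$ after the substitution \eqref{Q_k reduction}.) Granting this and writing $C_1=F_1E_1+(q-q^{-1})^{-2}(qK_1+q^{-1}K_1^{-1})$, I obtain
\[ C_1u(n,0,Q)=\a\,F_1u(n,0,Q)+(q-q^{-1})^{-2}\bigl(qK_1+q^{-1}K_1^{-1}\bigr)u(n,0,Q). \]

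For the $K_1^{\pm1}$-term I would compute straight from \eqref{u(n,l,Q)}: pushing $K_1^{\pm1}$ past $F_2^{n-k}F_3^k$ produces a factor $q^{\pm(n-2k)}$, and then using $K_1v_\eta=KK_2^{-2}v_\eta$, $K_1^{-1}v_\eta=K^{-1}K_2^2v_\eta$ together with the scaling relation gives $K_1u(n,0,Q)=u(n,-2,q^{n}KQ)$ and $K_1^{-1}u(n,0,Q)=u(n,2,q^{-n}K^{-1}Q)$, hence $(qK_1+q^{-1}K_1^{-1})u(n,0,Q)=u(n,-2,q^{n+1}KQ)+u(n,2,q^{-n-1}K^{-1}Q)$. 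Substituting the $l=0$ case of Lemma \ref{F_1 action} for $\a F_1u(n,0,Q)$, the terms $-u(n,-2,q^{n+1}KQ)$ and $-u(n,2,q^{-n-1}K^{-1}Q)$ appearing there cancel exactly against $(q-q^{-1})^{-2}(qK_1+q^{-1}K_1^{-1})u(n,0,Q)$, and the surviving terms are precisely the right-hand side of the corollary at $l=0$.

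The one step carrying real content is the identity $E_1u(n,0,Q)=\a\,u(n,0,Q)$; everything else is bookkeeping with the commutation relations in $U$, the scaling relation $u(n,l,Q)=q^{2nl}K_2^lu(n,0,Q)$, and Lemma \ref{F_1 action}.
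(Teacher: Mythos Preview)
Your proposal is correct and follows essentially the same approach as the paper: both use the definition of $C_1$, Lemma~\ref{F_1 action}, and the identities $E_1u(n,l,Q)=\a q^{l}u(n,l,Q)$ and $K_1u(n,l,Q)=q^{n}u(n,l-2,KQ)$ to assemble the formula. The only cosmetic difference is that you first reduce to $l=0$ via $[C_1,K_2]=0$, whereas the paper works directly at general $l$ since all the needed ingredients are already stated for arbitrary $l$.
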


\begin{proof} Follows from the definition of $C_1$, Lemma \ref{F_1 action} and the fact 
$$\hskip0.5cm E_1u(n,l,Q)=\a q^lu(n,l,Q),\ \ K_1u(n,l,Q)=K_2^{-2}Ku(n,l,Q)=q^{n}u(n,l-2,KQ).\hskip0.5cm\qedhere$$
\end{proof}

For convenience, we denote 
$$\k_\e=q^{-3n_\e}\k,\quad c_\e=\frac{q^{n_\e-3}\k^{-1}+q^{3-n_\e}\k}{(q-q^{-1})^2},\ \forall\ \e\in\{+,-\}.$$

\begin{corollary}\label{C_1 action} 
In the module $V(\eta;\k,c)$, we have $K\bar u_\e=\k_\e \bar u_\e, C_1\bar u_\e=c_\e\bar u_\e$ for $\e=+,-$.
\end{corollary}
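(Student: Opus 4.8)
The plan is to deduce the two eigenvalue statements directly from Corollary \ref{C_1 act} and the defining relations of $W(\eta;\k,c)$, by passing to the quotient $V(\eta;\k,c)=M(\eta)/W(\eta;\k,c)$ and evaluating the critical polynomial $h_{n_\e}(K,C_1)$ on the ideal $J(\k,c)$. Recall $u_\e=u(n_\e,0,1)$, so I first specialize Corollary \ref{C_1 act} to $n=n_\e$, $l=0$, $Q=1$. This yields
$$
C_1 u_\e=(q-q^{-1})^{-2}\Big(u(n_\e,0,q^{n_\e-3}K^{-1}+q^{3-n_\e}K)-\sum_{k=0}^{n_\e}\sum_{j=0}^{n_\e-k}q^{n_\e-2k-2j}a_{kj}(n_\e)F_2^{n_\e-k}F_3^k K_2^{2j}K^{-2j-k}h_{n_\e}(K,C_1) v_\eta\Big).
$$

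Next I argue that the entire sum on the right lies in $W(\eta;\k,c)$. Indeed, by Lemma \ref{W(eta,I)} the submodule $W(\eta;\k,c)=W(\eta,J(\k,c))$ consists of all sums $\sum F_2^j F_3^k K_2^l Q_{j,k,l}v_\eta$ with each $Q_{j,k,l}\in J(\k,c)$; so it suffices to observe that each coefficient appearing in the double sum is of the form (scalar)$\cdot K^{-2j-k}h_{n_\e}(K,C_1)$, and that $h_{n_\e}(K,C_1)\in J(\k,c)$ because $h_{n_\e}(\k,c)=0$ by the defining property of $n_\e$ — here $K^{-2j-k}$ is a unit in $\C[K^{\pm1},C_1]$ and $J(\k,c)$ is an ideal, so the product remains in $J(\k,c)$. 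Hence that sum maps to $0$ in $V(\eta;\k,c)$, giving $C_1\bar u_\e=(q-q^{-1})^{-2}\,\overline{u(n_\e,0,q^{n_\e-3}K^{-1}+q^{3-n_\e}K)}$.

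It then remains to identify this with $c_\e\bar u_\e$. Using the easily checked identity $u(n,l,QR)$-type $\C[K^{\pm1},C_1]$-linearity built into the definition \eqref{u(n,l,Q)} of $u(n,l,Q)$ in the polynomial slot, together with the fact that modulo $W(\eta;\k,c)$ one may replace the polynomial argument by its value at $K=\k$ (since $K-\k\in J(\k,c)$), I get $\overline{u(n_\e,0,q^{n_\e-3}K^{-1}+q^{3-n_\e}K)}=(q^{n_\e-3}\k^{-1}+q^{3-n_\e}\k)\,\bar u_\e$, hence $C_1\bar u_\e=c_\e\bar u_\e$ with $c_\e=\frac{q^{n_\e-3}\k^{-1}+q^{3-n_\e}\k}{(q-q^{-1})^2}$ as claimed. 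For the $K$-eigenvalue I use the relation recorded in the proof of Corollary \ref{C_1 act}, namely $K_1u(n,l,Q)=q^{n}u(n,l-2,KQ)$, which combined with $K=K_1K_2^2$ and $K_2u(n,l,Q)=q^{-?}\cdots$ — more directly, $Ku(n_\e,0,1)=K_1K_2^2 u(n_\e,0,1)$, and since $u(n_\e,0,1)$ is built from monomials $F_2^{n_\e-k}F_3^k K_2^{2j}K^{-2j-k}v_\eta$ on which $K$ acts by multiplication (as $K$ is central-like: $[C_1,K]=0$ and $K$ commutes with $E_1,F_1,K_1$, and one checks $KF_2=\cdots$, $KF_3=\cdots$ give the right power of $q$), the cleanest route is to observe $Ku(n_\e,0,Q)=u(n_\e,0,KQ)$ up to the explicit $q$-power bookkeeping, and then reduce $K$ modulo $J(\k,c)$ to the scalar $\k$ times the appropriate power $q^{-3n_\e}$ coming from commuting $K$ past the $F_2^{n_\e-k}F_3^k$ factors; this gives $K\bar u_\e=q^{-3n_\e}\k\,\bar u_\e=\k_\e\bar u_\e$.

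The main obstacle I anticipate is the bookkeeping in the last step: verifying that $C_1$ and $K$ act on $u(n_\e,0,1)$ by multiplying the polynomial slot (rather than producing extra lower-degree correction terms), and pinning down the exact power of $q$ relating $Ku(n_\e,0,1)$ to $\k_\e\bar u_\e$ — this requires carefully commuting $K=K_1K_2^2$ through the PBW monomials $F_2^{n_\e-k}F_3^k$ using the relations \eqref{E_3 F_3} and \eqref{comm}. Everything else is a direct application of Corollary \ref{C_1 act}, Lemma \ref{W(eta,I)}, and the vanishing $h_{n_\e}(\k,c)=0$, and involves no new idea.
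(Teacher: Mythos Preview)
Your proposal is correct and follows essentially the same approach as the paper: the $C_1$-eigenvalue comes from Corollary~\ref{C_1 act} together with $h_{n_\e}(\k,c)=0$, and the $K$-eigenvalue from commuting $K$ past the $F_2,F_3$ factors and reducing modulo $J(\k,c)$. The bookkeeping you flag as an obstacle is in fact trivial: since $KF_2=q^{-3}F_2K$ and $KF_3=q^{-3}F_3K$ (from \eqref{E_3 F_3} and the commutator relations), one has the clean identity $Ku(n,l,Q)=q^{-3n}u(n,l,KQ)$, which the paper simply quotes and which immediately gives $K\bar u_\e=q^{-3n_\e}\k\,\bar u_\e=\k_\e\bar u_\e$.
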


\begin{proof} The first equation follows from 
$Ku(n,l,Q)=q^{-3n}u(n,l,KQ)$ and the second one follows from
the fact $h_{n_\e}(\k,c)=0$ and Corollary \ref{C_1 act}.
\end{proof}

\begin{corollary}\label{EndV} For any $(\k',c')\in\C^2, \k'\neq0$, we have another quotient module $V(\eta;\k',c')$.
\begin{itemize}
\item[(a)] $\Hom_U(V(\eta;\k',c'),V(\eta;\k,c))=0$ if $(\k',c')\notin\{(\k,c), (\k_+, c_+), (\k_-, c_-)\}$.
\item[(b)] $\End_U(V(\eta;\k,c))=\C\id$ and $\Hom_U(V(\eta;\k_\e,c_\e),V(\eta;\k,c))=\C\phi_\e$, where $\id$ is the identity map, $\phi_\e$ is the $U$-module homomorphism defined by $\phi_\e(\bar v'_\eta)=\bar u_\e$ for $\e=+,-$ and $\bar v'_\eta$ is the image of $v_\eta$ in $V(\eta;\k_\e,c_\e)$.
\item[(c)] In particular, $V(\eta;\k,c)$ is indecomposable.
\end{itemize}
\end{corollary}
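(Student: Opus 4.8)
The plan is to reduce the corollary to the classification of Whittaker vectors and a short linear-algebra argument. For any $(\k',c')$ with $\k'\neq0$, a $U$-module map $\psi\colon V(\eta;\k',c')\to V(\eta;\k,c)$ is determined by $w:=\psi(\bar v'_\eta)$, and conversely such a $\psi$ with given $w$ exists precisely when $w$ is a Whittaker vector of type $\eta$ in $V(\eta;\k,c)$ killed by $K-\k'$ and $C_1-c'$. Here one uses $M(\eta)=Uv_\eta$, the identification of Whittaker vectors of type $\eta$ in a module $N$ with $U$-maps $M(\eta)\to N$, and the fact that $W(\eta;\k',c')=UJ(\k',c')v_\eta$ is generated over $U$ by $(K-\k')v_\eta$ and $(C_1-c')v_\eta$. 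Thus $\Hom_U(V(\eta;\k',c'),V(\eta;\k,c))$ is identified with the simultaneous eigenspace for eigenvalue $(\k',c')$ of the commuting pair $(K,C_1)$ acting on the space $\mathcal W$ of Whittaker vectors of type $\eta$ in $V(\eta;\k,c)$, so the whole corollary becomes a statement about $\mathcal W$.

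Next I would pin down $\mathcal W$ via Theorem \ref{vector} with $l=0$ and $I=J:=J(\k,c)$. Since $J$ is maximal with residue field $\C$, first $\C[K^{\pm1},C_1]\bar v_\eta=\C\bar v_\eta$ (by Lemma \ref{W(eta,I)} its intersection with $W(\eta;\k,c)$ is $Jv_\eta$). Second, as $h_n(K,C_1)\equiv h_n(\k,c)\pmod J$, one has $\hat J_n=J$ when $h_n(\k,c)\neq0$ and $\hat J_n=\C[K^{\pm1},C_1]$ when $h_n(\k,c)=0$, so in Theorem \ref{vector} only $n\in\{n_+,n_-\}$ contribute. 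Finally, $u(n,l,Q)$ is $\C$-linear in $Q$ by \eqref{u(n,l,Q)}, and $u(n_\e,0,Q_0)\in W(\eta;\k,c)$ for $Q_0\in J$ (immediate from Lemma \ref{W(eta,I)}); writing $Q=\bar Q\cdot1+(Q-\bar Q)$ with $\bar Q\in\C$ the value of $Q$ at $K=\k$, $C_1=c$ (so $Q-\bar Q\in J$) yields $\bar u(n_\e,0,Q)=\bar Q\,\bar u_\e$. Altogether $\mathcal W=\C\bar v_\eta+\C\bar u_++\C\bar u_-$, and these spanning vectors are linearly independent when $n_+\neq n_-$, because $\bar v_\eta\in M_0$ while $\bar u_\e\in M_{n_\e}\setminus M_{n_\e-1}$ with $1\leq n_-<n_+$; in the degenerate case $n_+=n_-$ one simply has $\bar u_+=\bar u_-$ and $\dim\mathcal W=2$.

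To finish I would use $K\bar v_\eta=\k\bar v_\eta$, $C_1\bar v_\eta=c\bar v_\eta$ together with $K\bar u_\e=\k_\e\bar u_\e$, $C_1\bar u_\e=c_\e\bar u_\e$ from Corollary \ref{C_1 action}, and the fact that $(\k,c)$, $(\k_+,c_+)$, $(\k_-,c_-)$ are pairwise distinct since $\k_\e=q^{-3n_\e}\k\neq\k$ and $\k_+\neq\k_-$ when $n_+\neq n_-$ ($q$ not a root of unity). Then every simultaneous $(K,C_1)$-eigenspace in $\mathcal W$ is at most one-dimensional: it vanishes unless $(\k',c')$ is one of the three listed pairs, giving (a); for $(\k',c')=(\k,c)$ it is $\C\bar v_\eta$, and a self-map of $V(\eta;\k,c)$ taking $\bar v_\eta$ to $\lambda\bar v_\eta$ is $\lambda\,\id$, so $\End_U(V(\eta;\k,c))=\C\,\id$; for $(\k',c')=(\k_\e,c_\e)$ it is $\C\bar u_\e=\C\phi_\e$, since $\phi_\e$ is well defined (Lemma \ref{E_2 action} and Corollary \ref{C_1 action}) and nonzero ($\bar u_\e\neq0$). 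This gives (b), and (c) follows at once: $\End_U(V(\eta;\k,c))\cong\C$ is a field, hence local, so $V(\eta;\k,c)$ is indecomposable.

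The substantive inputs — Theorem \ref{vector} and Corollary \ref{C_1 action} — are already available, so no new computation is needed. The real obstacle is the bookkeeping: checking that $\hat J_n$ collapses to $J$ for non-critical $n$ (so that Theorem \ref{vector} leaves precisely the two extra Whittaker lines $\C\bar u_\pm$), verifying the reduction $\bar u(n_\e,0,Q)=\bar Q\,\bar u_\e$ that makes $\mathcal W$ finite-dimensional, and handling uniformly the one-versus-two critical integers $n_\pm$ and their possible coincidence.
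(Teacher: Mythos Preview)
Your proposal is correct and follows essentially the same approach as the paper: both arguments identify $\Hom_U(V(\eta;\k',c'),V(\eta;\k,c))$ with the $(K,C_1)$-eigenspace of eigenvalue $(\k',c')$ inside the space of type-$\eta$ Whittaker vectors in $V(\eta;\k,c)$, invoke Theorem~\ref{vector} to see this space is spanned by $\bar v_\eta,\bar u_+,\bar u_-$, and use Corollary~\ref{C_1 action} to read off their distinct eigenvalues. Your write-up is in fact more detailed than the paper's (you spell out why $\hat J_n$ collapses, why $\bar u(n_\e,0,Q)=\bar Q\,\bar u_\e$, and why the eigenvalues are pairwise distinct), but the underlying logic is the same.
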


\begin{proof} 
By Theorem \ref{vector}, the set of Whittaker vectors of type $(\a, 0)$ in $V(\eta;\k,c)$ is 
spanned by $\bar v_\eta$ and $\bar u_\pm=\bar u(n_\pm,0,1)$, which are eigenvectors of distinct eigenvalues 
$(\k,c), (\k_\pm,c_\pm)$ respectively, with respect to the action of $(K,C_1)$,
by Corollary \ref{C_1 action}

On the other hand, $\bar v'_\eta\in V(\eta;\k',c')$ is an eigenvector of eigenvalue $(\k',c')$ with respect to the action of $(K,C_1)$. Any $\phi\in\Hom_U(V(\eta;\k',c'),V(\eta;\k,c))$ must take $\bar v'_\eta$ to a multiple of one of $\bar v_\eta,\bar u_\pm$.
The results follow since $V(\eta;\k',c')$ is generated by $\bar v'_\eta$.
%
%$\phi(\bar{\bar{v}}_\eta)=a\bar v_{\eta}+b\bar u_++c\bar u_-$ for some $a,b,c\in\C$. Then by Corollary \ref{C_1 action}, we have 
%$$\k\phi(\bar v_\eta)=\phi(K\bar v_\eta)=K\phi(\bar v_\eta)=\k(a\bar v_{\eta}+bq^{-3n_+}\k\bar u_++cq^{-3n_-}\bar u_-),$$
%forcing $b=c=0$. We see that $\phi$ is a multiple of the identity map, as desired.
\end{proof}

\begin{corollary}\label{tildeW}
The module $\tildeW_\e(\eta;\k,c)$ has a basis $\{F_2^jF_3^kK_2^l\bar u_\e\ |\ j,k\in\Z_+, l\in\Z\}$ for $\e=+,-$. Moreover, we have the $U$-module isomorphism:
$$\tildeW_\e(\eta;\k,c)\cong V(\eta;\k_\e,c_\e),\ \forall\ \e\in\{+,-\}.$$
\end{corollary}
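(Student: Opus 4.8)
The plan is to establish the two assertions of Corollary~\ref{tildeW} in sequence: first the basis claim, then the isomorphism, deducing the latter from the former together with Corollary~\ref{EndV}. Fix $\e\in\{+,-\}$ and write $n=n_\e$, $u=u_\e=u(n,0,1)$, $\k'=\k_\e$, $c'=c_\e$. Since $\tildeW_\e(\eta;\k,c)=U\bar u_\e$, the spanning part amounts to showing that $U\bar u_\e=\spn\{F_2^jF_3^kK_2^l\bar u_\e\}$. For this I would argue that the $\C$-span of $\{F_2^jF_3^kK_2^l\bar u_\e\}$ is already a $U$-submodule. It is visibly closed under $F_2$ and $K_2^{\pm1}$; closure under $E_1,E_2,K_1^{\pm1},F_1$ follows because each of these operators sends $u(n,l,Q)$ to a $\C[K^{\pm1},C_1]$-combination of elements $u(n,l',Q')$, and in $V(\eta;\k,c)$ we have $Q\bar u_\e=(\text{scalar})\,\bar u_\e$ by Corollary~\ref{C_1 action}, while $u(n,l',Q')=q^{2nl'}K_2^{l'}u(n,0,Q')$. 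Concretely: $E_1\bar u(n,l,Q)=\a q^l\bar u(n,l,Q)$ and $K_1\bar u(n,l,Q)=q^n\bar u(n,l-2,KQ)$ were recorded in the proof of Corollary~\ref{C_1 act}; $F_1$ acts by the formula of Lemma~\ref{F_1 action}, whose right-hand side, modulo $W(\eta;\k,c)$ (where $h_n(K,C_1)$ acts as $h_n(\k,c)=0$), is a combination of $\bar u(n,l\pm 2,\cdot)$ and $\bar u(n,l,\cdot)$; and $E_2\bar u_\e$, hence $E_2K_2^l\bar u_\e$, lies in this span by the explicit expression for $E_2u$ in Lemma~\ref{E_2 action}, noting that $h_{n}(K,C_1)$ again acts as $0$ so $E_2\bar u_\e=0$, and more generally $E_2(K_2^lQ\bar u_\e)$ can be read off from Lemma~\ref{compute}(3) applied degree by degree. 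So the span is a submodule containing $\bar u_\e$, hence equals $\tildeW_\e(\eta;\k,c)$.

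Next I would prove linear independence of $\{F_2^jF_3^kK_2^l\bar u_\e\}$ in $V(\eta;\k,c)$. The key point is the leading-term structure of $u(n,l,Q)$: by \eqref{u(n,l,Q)}, $u(n,l,Q)=q^{2nl}K_2^l\big(a_{00}(n)F_2^nQv_\eta+(\text{lower degree in }\Gamma)\big)$ with $a_{00}(n)\neq 0$, so $\deg u(n,l,Q)=(n,n,l)$ when $Q\notin J(\k,c)$ (recall $\deg\bar u(n_\e,l,1)=(n_\e,n_\e,l)$ was already noted). Then $F_2^jF_3^kK_2^lu(n,0,1)$ has degree $(n+j+k,\,n+k,\,l)$ — more precisely its leading monomial in the basis $\{F_2^{j'}F_3^{k'}K_2^{l'}\C[K^{\pm1},C_1]\bar v_\eta\}$ of $V(\eta;\k,c)$ is a nonzero multiple of $F_2^{n+j}F_3^{n+k}K_2^l\bar v_\eta$, after using $F_3K_2^{-1}$-type commutations to move factors past one another (all such commutations only introduce powers of $q$ and lower-degree terms). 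Distinct triples $(j,k,l)$ give distinct leading monomials, so a nontrivial relation $\sum c_{jkl}F_2^jF_3^kK_2^l\bar u_\e=0$ would force the top one to vanish, a contradiction; this yields the basis.

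For the isomorphism, by Corollary~\ref{EndV}(b) there is a nonzero $\phi_\e\in\Hom_U(V(\eta;\k_\e,c_\e),V(\eta;\k,c))$ with $\phi_\e(\bar v'_\eta)=\bar u_\e$, so $\im\phi_\e=U\bar u_\e=\tildeW_\e(\eta;\k,c)$. It remains to check $\phi_\e$ is injective. Since $V(\eta;\k_\e,c_\e)$ has the $\C$-basis $\{F_2^jF_3^kK_2^l\bar v'_\eta\}$ (stated right before Theorem~\ref{non-critical}), $\phi_\e$ maps it onto $\{F_2^jF_3^kK_2^l\bar u_\e\}$, which is a basis of $\tildeW_\e(\eta;\k,c)$ by the first part; a linear map carrying a basis to a basis is bijective. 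Hence $\phi_\e$ is an isomorphism $V(\eta;\k_\e,c_\e)\xrightarrow{\ \sim\ }\tildeW_\e(\eta;\k,c)$.

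The main obstacle is the linear-independence step: one must be careful that multiplying $u(n,l,Q)$ on the left by $F_2^jF_3^k$ does not cause the putative leading term $F_2^{n+j}F_3^{n+k}K_2^l$ to cancel against contributions from the lower-order part of $u(n,l,Q)$ or from reordering $F_3$ past $F_2$ and $K_2$. This is controlled by the filtration of Section~3 and Lemma~\ref{compute}: every reordering strictly lowers the $\Gamma$-degree or preserves it with an invertible scalar, so the degree of $F_2^jF_3^kK_2^l\bar u_\e$ is exactly $(n+j+k,n+k,l)$ and the leading monomials for distinct $(j,k,l)$ are genuinely distinct. Once this bookkeeping is done the rest is formal.
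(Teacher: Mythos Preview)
Your approach is essentially the same as the paper's: both argue linear independence via the $\Gamma$-filtration (the paper simply says ``elements therein have distinct degrees''), and both deduce the isomorphism from the map $\phi_\e$ of Corollary~\ref{EndV}(b) by observing that it carries the basis $\{F_2^jF_3^kK_2^l\bar v'_\eta\}$ of $V(\eta;\k_\e,c_\e)$ to the set $\{F_2^jF_3^kK_2^l\bar u_\e\}$. Your spanning argument is a hands-on verification of what the paper leaves implicit (it follows at once from the surjectivity of $\phi_\e$ onto $U\bar u_\e$, since $V(\eta;\k_\e,c_\e)$ already has the desired basis).

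One small correction: in your leading-term analysis you write $u(n,l,Q)=q^{2nl}K_2^l\big(a_{00}(n)F_2^nQv_\eta+\text{lower}\big)$, but in the $\Gamma$-order (which compares $j+k$ first, then $k$) the top term of $u(n,0,1)$ is the $k=n$ summand $a_{n,0}(n)F_3^nK^{-n}v_\eta$, not $F_2^n$. Consequently the leading monomial of $F_2^jF_3^kK_2^l\bar u_\e$ is a nonzero multiple of $F_2^{j}F_3^{k+n}K_2^l\bar v_\eta$, with degree $(j,k+n,l)$ in the paper's convention, not $F_2^{n+j}F_3^{n+k}K_2^l$. This does not affect your conclusion---distinct $(j,k,l)$ still yield distinct degrees---but the intermediate claim should be fixed.
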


\begin{proof} It is obvious that $\{F_2^jF_3^kK_2^l\bar v_\eta\ |\ j,k\in\Z_+, l\in\Z\}$ is a basis of $V(\eta;\k,c)$. On the other hand, 
%Note that 
%\begin{equation*}
%\bar u_\e=\sum_{k=0}^{n_\e}\sum_{j=0}^{n_\e-k}\k^{-2j-k}a_{kj}(n_\e)F_2^{n_\e-k}F_3^k
%K_2^{2j}\bar v_\eta,\ \forall\ \e\in\{+,-\}
%\end{equation*}
%where $a_{kj}(n_i)=(-1)^j\a^k(q^2-1)^kq^{j(n_i-3)}q^{\frac{1}{2}k(2n_i+2j+k-7)}\qchoose{n_i}{k}\qchoose{n_i-k}{j}$.
%Fix one $\e=+,-$.
%Since $\tildeW_\e=\tildeW_\e(\eta;\k,c)$ is generated by the Whittaker vector $\bar u_\e$, 
we have $\tildeW_\e(\eta;\k,c)=\spn\{F_2^jF_3^kK_2^l\bar u_\e\ |\ j,k\in\Z_+, l\in\Z\}$, which is a linearly-independent subset since elements therein have distinct degrees.
The first result follows.

The $U$-module homomorphism $\phi_\e$ in Corollary \ref{EndV} (b) is an isomorphism 
from $V(\eta;\k_\e,c_\e)$ to $\tildeW_\e(\eta;\k,c)$,
%Set $\k'=q^{-3n_\e}\k$ and $c'=(q-q^{-1})^{-2}(q^{n-3}\k^{-1}+q^{3-n}\k)$.
%There exists a canonical $U$-module homomorphism from $V(\eta; \k',c')$ to $\tildeW$,
%sending the image of $v_\eta$ in $V(\eta;\k_\e,c_\e)$ to $\bar u_\e$ in $\tildeW_\e$.
%This is obviously an isomorphism 
since it sends basis to basis, proving the second result.
%Since $K\bar v_\eta=\k \bar v_\eta$, we have
%$$\bar u_i=\overline{u(n_i,0,1)}=\sum_{k=0}^{n}\sum_{j=0}^{n-k}\k^{-2j-k}a_{kj}(n_i)F_2^{n-k}F_3^k
%K_2^{2j}\bar v_\eta,$$ 
%and deduce $Ku_i=q^{-3n_i}\k \bar u_i$. Moreover, $K_2F_1=qF_1K_2$ and $u(n_i,l,1)=q^{2n_il}K_2^lu_i$ 
%implies $\barW=\spn\{F_2^jF_3^kF_1^m\overline{u(n_i,l,1)}\}$. 
%
%%Rewriting the identity $u(n_i,l,1)=q^{2n_il}K_2^lu_i$ for any $l\in\Z$, we get
%%$$\aligned
%%K_2^l\bar u_i%=K_2^l\sum_{k=0}^{n}\sum_{j=0}^{n-k}\k^{-2j-k}a_{kj}(n_i)F_2^{n-k}F_3^k K_2^{2j}\bar v_\eta
%%=q^{-2n_il}\sum_{k=0}^{n}\sum_{j=0}^{n-k}q^{kl}\bar a_{kj}(n_i)F_2^{n-k}F_3^k
%%K_2^{2j+l}\bar v_\eta,\\
%%\endaligned$$
%%where $\bar a_{kj}(n_i)=\k^{-2j-k}a_{kj}(n_i)$.
%
\end{proof}

%Fix one $\e=+,-$. 
By Corollary \ref{tildeW}, we see that $\bigoplus_{(j,k,l)\in\Gamma}\C F_2^jF_3^kK_2^lu(n_\e,0,1)$ is a 
complemental subspace of $W(\eta;\k,c)$ in $\hatW_\e(\eta; \k,c)$, that is, as a vector space,
\begin{equation}\label{hat W}\aligned
\hatW_\e(\eta; \k,c)=&W(\eta; \k,c)\oplus\Big(\bigoplus_{(j,k,l)\in\Gamma}\C F_2^jF_3^kK_2^lu_\e\Big)\\
=&\Big(\bigoplus_{(j,k,l)\in\Gamma}\C F_2^jF_3^kK_2^lJ(\eta;\k,c)v_\eta\Big)\oplus\Big(\bigoplus_{(j,k,l)\in\Gamma}\C F_2^jF_3^kK_2^lu(n_\e,0,1)\Big)\\
=&\Big(\bigoplus_{(j,k,l)\in\Gamma}\C F_2^jF_3^kK_2^lJ(\eta;\k,c)v_\eta\Big)\oplus\Big(\bigoplus_{(j,k,l)\in\Gamma}\C F_2^jF_3^ku(n_\e,l,1)\Big).
\endaligned\end{equation}

%\begin{lemma}\label{in W}
%Set $i=1,2$. If $w\in\hatW_i(\eta; \k,c)$ is a linear combinations of monomials of the form $F_2^jF_3^kK_2^lf(K,C_1)v_\eta, 0\leq k<n_i, f(K,C_1)\in\C[K^{\pm1},C_1]$, then $w\in W(\eta; \k,c)$. 
%\end{lemma}
%
%\begin{proof} 
%Noticing that $u(n_i,l,1)=aF_3^{n_i}K_2^lK^{-n_i}v_\eta+u'$, where $a\in\C$ is nonzero and $u'$ is a linear combination of terms of the form $F_2^jF_3^kK_2^{l'}K^{m}v_\eta$ with $k<n_i$. We have $\deg u(n_i,l,1)=(n_i,n_i,l)$ and the lemma follows by comparing the degree of $F_3$.
%\end{proof}

Set $\hat\Gamma_\e=\{(j,k,l)\in\Gamma, 0\leq k< n_\e\}$ for $\e=+,-$. 
Since $\deg u(n_\e,0,1)=(n_\e,n_\e,0)$, 
we also deduce that $\sum_{(j,k,l)\in\hat\Gamma_\e}F_2^jF_3^kK_2^l\bar v_\eta$ is a complemental subspace of 
$\tildeW_\e(\eta;\k,c)$ in $V(\eta;\k,c)$ from Corollary \ref{tildeW}. 
Hence $\sum_{(j,k,l)\in\hat\Gamma_\e}F_2^jF_3^kK_2^l v_\eta$ 
is a complemental subspace of $\hatW_\e(\eta;\k,c)$ in $M(\eta)$, that is,
\begin{equation}\label{hat Gamma}
M(\eta)=\hatW_\e(\eta;\k,c)\oplus \bigoplus_{(j,k,l)\in\hat\Gamma_\e}F_2^jF_3^kK_2^lv_\eta,
\end{equation}

%%\begin{equation*}
%%u(n,0,1)=\sum_{k=0}^{n}\sum_{j=0}^{n-k}\k^{-2j-k}a_{kj}(n)F_2^{n-k}F_3^k
%%K_2^{2j}v_\eta,
%%\end{equation*}
%%where $a_{kj}(n)=(-1)^j\a^k(q^2-1)^kq^{j(n-3)}q^{\frac{1}{2}k(2n+2j+k-7)}\qchoose{n}{k}\qchoose{n-k}{j}$.
%
%\begin{equation*}
%u(n,l,Q)=\sum_{k=0}^{n}\sum_{j=0}^{n-k}q^{kl}a_{kj}(n)F_2^{n-k}F_3^k
%K_2^{l+2j}K^{-2j-k}Qv_\eta,
%\end{equation*}
%where $a_{kj}(n)=(-1)^j\a^k(q^2-1)^kq^{j(n-3)}q^{\frac{1}{2}k(2n+2j+k-7)}\qchoose{n}{k}\qchoose{n-k}{j}$.
%
Then we can deduce the irreducibility of $\tildeW_\e$ for $\e=+,-$. Recall that $n_-\leq n_+$.

\begin{theorem}\label{critical} Let notation be as before. 
%Then $\tildeW_+(\eta;\k,c)\subseteq \tildeW_-(\eta;\k,c)$ and they are the only nonzero proper submodules of $V(\eta;\k,c)$. 
%In particular, $\tildeW_-(\eta;\k,c)$ is the unique maximal submodule of $V(\eta;\k,c)$
%and $\tildeW_+(\eta;\k,c)$ is the unique minimal submodule of $V(\eta;\k,c)$.
\begin{itemize}
\item[(a)] $\tildeW_-(\eta;\k,c)$ is a maximal submodule of $V(\eta;\k,c)$.
\item[(b)] $\tildeW_+(\eta;\k,c)$ is a minimal submodule of $V(\eta;\k,c)$.
\item[(c)] If $n_+=n_-$, then $\tildeW_+(\eta;\k,c)$ is the unique nonzero proper submodule of $V(\eta;\k,c)$. In particular, $V(\eta;\k,c)$ admits a unique composition series
$$0\subset\tildeW_+(\eta;\k,c)=\tildeW_-(\eta;\k,c)\subset V(\eta;\k,c).$$
\item[(d)] If $n_-<n_+$, then $\tildeW_+(\eta;\k,c)\subset \tildeW_-(\eta;\k,c)$ are the only two nonzero proper submodules of $V(\eta;\k,c)$. In particular, $V(\eta;\k,c)$ admits a unique composition series
$$0\subset\tildeW_+(\eta;\k,c)\subset \tildeW_-(\eta;\k,c)\subset V(\eta;\k,c).$$
\end{itemize}
\end{theorem}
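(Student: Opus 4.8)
The plan is to exploit the filtration on $V(\eta;\k,c)$ together with the complete description of Whittaker vectors (Theorem \ref{vector}) and the structure results already established: Corollary \ref{tildeW} (giving $\tildeW_\e \cong V(\eta;\k_\e,c_\e)$ with explicit bases), and the decompositions \eqref{hat W} and \eqref{hat Gamma}. The key observation is that a nonzero submodule $N$ of $V(\eta;\k,c)$ always contains a Whittaker vector: take a nonzero $w\in N$ of minimal degree, write $w = u + u'$ with $u$ its leading homogeneous part, and argue as in the proof of Proposition \ref{submodule} that $(E_1 - \a q^{l_0})w$ and $E_2 w$ lie strictly below $\deg(w)$, hence must be zero; then Lemma \ref{E_2 action} forces $u$ to be of the form $u(n_0,l_0,Q_{l_0})$ with $h_{n_0}(\k,c) Q_{l_0}\in J(\k,c)$, i.e.\ $n_0\in\{n_+,n_-\}$ (since $h_{n_0}(\k,c)=0$ is the only way this happens for the maximal ideal), and one reduces to conclude $\bar u_{n_0}$ (a scalar multiple of $\bar u_\e$) lies in $N$. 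Thus every nonzero submodule contains $\bar u_+$ or $\bar u_-$, so it contains $\tildeW_+$ or $\tildeW_-$. This is the technical heart and I expect it to be where the real work lies, though it parallels Proposition \ref{submodule} closely.

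Granting that, the four parts follow from degree bookkeeping. For (b): since $\tildeW_+ \cong V(\eta;\k_+,c_+)$ by Corollary \ref{tildeW}, its Whittaker vectors are (by Theorem \ref{vector}, applied inside that module, and using that $\hat J_{n}$ for $J=J(\k_+,c_+)$ is $J$ itself whenever $h_n(\k_+,c_+)\neq 0$) spanned over $\C[K^{\pm1},C_1]$ by $\bar u_+$; the possible extra Whittaker vectors $u(m,l,Q)$ with $Q\in\hat J_m\setminus J$ require $h_m(\k_+,c_+)=0$, and a short computation with $\k_+ = q^{-3n_+}\k$, $c_+$ shows the only solution is $m=n_+$ when $n_+=n_-$ (handled in (c)) and forces $m=n_+ - \text{(something)}$—in any case any nonzero submodule of $\tildeW_+$ again contains a copy of the whole thing, giving minimality. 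For (a): by the first paragraph, $\tildeW_-$ contains every nonzero submodule's generating Whittaker vector's orbit, and since $n_-\le n_+$ we have $\tildeW_+\subseteq\tildeW_-$ (as $\bar u_+$ is a Whittaker vector of $V(\eta;\k,c)$, hence by Theorem \ref{vector} expressible inside $\tildeW_- \cong V(\eta;\k_-,c_-)$ via some $u(m,\cdot,\cdot)$; alternatively $\tildeW_+ = U\bar u_+$ and one checks $\bar u_+ \in \tildeW_-$ directly using that $h_{n_+}(\k_-,c_-)=0$ iff $n_+ = n_-$ or the critical-pair relation $\k=\pm q^{n_++n_--3}$ holds, which it does). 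Maximality of $\tildeW_-$: any submodule properly containing it, being a nonzero submodule, contains $\bar u_+$ or $\bar u_-$; $\bar u_-$ is already in it, and a submodule of $V(\eta;\k,c)$ properly containing $\tildeW_-$ and not equal to $V(\eta;\k,c)$ would have to be generated (modulo $\tildeW_-$) by Whittaker vectors of the quotient $V(\eta;\k,c)/\tildeW_-$, which by Theorem \ref{vector} and $\hat J_n$-analysis has no Whittaker vectors outside the span of $\bar v_\eta$, forcing equality.

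For (c) and (d): these are the enumeration statements. When $n_+=n_-=:n$, we have $\k_+ = \k_-$, $c_+=c_-$, so $\bar u_+$ and $\bar u_-$ are scalar multiples and $\tildeW_+=\tildeW_-$; since $\tildeW_+\cong V(\eta;\k_+,c_+)$ with $(\k_+,c_+)$ now \emph{non-critical} (one checks $h_m(\k_+,c_+)=0$ would force another integer solution, contradicting $n_+=n_-$ unless $m=n_+$, but $h_{n_+}(\k_+,c_+) = q^{3-2n_+}\k_+ + q^{2n_+-3}\k_+^{-1} - (q-q^{-1})^2 c_+$ which is $\neq 0$ by a direct substitution), so by Theorem \ref{non-critical} it is irreducible, giving the unique composition series. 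When $n_-<n_+$, we get the strict chain $0\subset\tildeW_+\subset\tildeW_-\subset V(\eta;\k,c)$; irreducibility of $\tildeW_+$ is part (b), and that there is nothing strictly between $\tildeW_+$ and $\tildeW_-$ follows because $\tildeW_-/\tildeW_+ \cong V(\eta;\k_-,c_-)/(\text{copy of }V(\eta;\k_+,c_+))$ whose only Whittaker vectors (by Theorem \ref{vector}) are the scalar multiples of the generating one — so it is irreducible by the argument of part (b) applied one level down. The main obstacle throughout is the first-paragraph claim that every nonzero submodule contains $\bar u_+$ or $\bar u_-$; once that is in hand, everything else is degree counting plus invoking Theorem \ref{vector}, Theorem \ref{non-critical}, and Corollary \ref{tildeW}.
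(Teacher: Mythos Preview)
Your first-paragraph argument is correct and is the paper's engine as well: a minimal-degree nonzero element of any submodule $N\subseteq V(\eta;\k,c)$ is annihilated by $E_1-\a q^{l_0}$ and $E_2$, hence is a Whittaker vector, and by Theorem \ref{vector} together with separation by the $(K,C_1)$-eigenvalues $(\k,c),(\k_\pm,c_\pm)$ (Corollary \ref{C_1 action}) one gets that every nonzero proper $N$ contains $\bar u_+$ or $\bar u_-$. This yields (b) cleanly and shows $\tildeW_+$ is the unique minimal submodule once $\tildeW_+\subseteq\tildeW_-$ is known.

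The genuine gap is in (a) and (d). To prove maximality of $\tildeW_-$ you pass to $V(\eta;\k,c)/\tildeW_-$ and invoke Theorem \ref{vector} to say its only Whittaker vectors are multiples of $\bar v_\eta$; but Theorem \ref{vector} (and Lemmas \ref{E_1 action}, \ref{E_2 action} on which it rests) is proved only for quotients $M(\eta)/W(\eta,I)$, and $\hatW_-(\eta;\k,c)$ is \emph{not} of the form $W(\eta,I)$. The same misapplication recurs when you analyze $\tildeW_-/\tildeW_+$ in (d), and even granting simple successive quotients you have not excluded a proper $N\supsetneq\tildeW_+$ with $N\not\subseteq\tildeW_-$ (a length-two $V/\tildeW_+$ could a priori be semisimple). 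The paper closes both gaps with one device: for proper $N\supsetneq\hatW_\e$, take $w\in N\setminus\hatW_\e$ of minimal degree and write it in the complement $\bigoplus_{(j,k,l)\in\hat\Gamma_\e}\C F_2^jF_3^kK_2^lv_\eta$ of \eqref{hat Gamma}. Since this complement involves only $F_3$-powers $k<n_\e$, so do $(E_1-\a q^{l_0})w$ and $E_2w$; comparison with the decomposition \eqref{hat W} then forces these to lie not merely in $\hatW_\e$ but in $W(\eta;\k,c)$. Now Lemmas \ref{E_1 action} and \ref{E_2 action} \emph{do} apply and force each $M_{n_i}$-component of $w$ to be one of $K_2^{l_0}v_\eta$, $u(n_\pm,l_0,1)$ with $n_i<n_\e$. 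For $\e=-$ this gives an immediate contradiction, proving (a); for $\e=+$ it produces $\bar u_-\in N$ after eigenvalue separation, which together with indecomposability of $V(\eta;\k,c)$ (Corollary \ref{EndV}, which your outline does not invoke) finishes (c) and (d).
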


\begin{proof} 
(a) Fix an $\e\in\{+,-\}$. Let $N$ be a proper submodule of $M(\eta)$ that properly contains $\hatW_\e=\hatW_\e(\eta;\k,c)$. 
Take any $w\in N\setminus \hatW_\e$ with minimal degree, then 
by \eqref{hat W} and \eqref{hat Gamma}, we can write 
$$w=\sum_{(j,k,l)\in\hat\Gamma_\e}a_{j,k,l}F_2^jF_3^kK_2^lv_\eta,$$
where only finitely many $a_{j,k,l}\in\C$ are nonzero.

Set $\deg(w)=(j_0,k_0,l_0)\in\hat\Gamma_\e$ and $n_0=j_0+k_0$. If $n_0=0$, then $w=\sum_{l\in\Z}a_{0,0,l}K_2^lv_\eta$, forcing $v_\eta\in N$ by Lemma \ref{claim'} and hence $N=M(\eta)$, contradiction.  
So we have $n_0\geq 1$.

%Now suppose $k_0+l_0=n_0\geq 1$. Using the same argument as in the proof of Proposition \ref{submodule}, we 
%may deduce that $a_{j_0,k_0,l}\neq0$ for exactly one $l\in\Z$, which we denote by $l_0$. 
By Lemma \ref{compute} (4), we have $\deg((E_1-\a q^{l_0})w)<\deg w$ and $\deg(E_2w)<\deg w$, hence $(E_1-\a q^{l_0})w, E_2w\in\hatW_\e$. 

On the other hand, both $(E_1-\a q^{l_0})w$ and $E_2w$ are linear combinations of elements in $F_2^jF_3^kK_2^l\C[K^{\pm1},C_1]v_\eta, 0\leq k<n_\e$ by Lemma \ref{compute}, c.f. \eqref{E_1-alpha} and \eqref{E_2u}.
By \eqref{hat W}, we must have $(E_1-\a q^{l_0})w, E_2w\in W(\eta; \k,c)$.

Write $w=\sum_{i=0}^{p}w_{n_i}$ with $0\neq w_{n_i}\in M_{n_i}$ for distinct $n_i\in\Z_+$ and $p\in\Z_+$. 
By Lemma \ref{claim} and Lemma \ref{compute} (4), we also have 
$(E_1-\a q^{l_0})w_{n_i}, E_2w_{n_i}\in W(\eta; \k,c)$, forcing each $w_{n_i}$ to be a multiple of one
of the elements $u(n_\pm,l_0,1)\in M_{n_{\pm}}$ or $K_2^{l_0}v_\eta\in M_0$. Hence $n_i\in\{0,n_+,n_-\}$. Note that $n_i<n_\e$ for $i=0,1,\dots,p$.

Now take $\e=-$, then we have $p=0$ and $n_0=0$, which indicate that $w$ is a nonzero multiple of $v_\eta$, contradiction. So $\hatW_-(\eta;\k,c)$ is a maximal submodule of $M(\eta)$ and hence $\tildeW_-(\eta;\k,c)$ is a maximal submodule of $V(\eta;\k,c)$. This proves (a).
\smallskip

(b) %Take $\e\in\{+,-\}$. 
By Lemma \ref{tildeW}, we have $\tildeW_\e(\eta;\k,c)\cong V(\eta;\k_\e,c_\e)$ as a $U$-module,
where $\k_\e=q^{-3n_\e}\k$, $c_\e=(q-q^{-1})^{-2}(q^{n_\e-3}\k^{-1}+q^{3-n_\e}\k)$.
If $V(\eta;\k_\e,c_\e)$ is not irreducible, then $(\k_\e,c_\e)$ is critical. 
Say, $h_{n'}(\k_\e,c_\e)=0$ for some $n'\in\N$, then $V(\eta;\k_\e,c_\e)$ has a Whittaker vector of 
degree $(n',n',0)$ and hence $\tildeW_\e(\eta;\k,c)$ has a Whittaker vector of 
degree $(n'+n_\e,n'+n_\e,0)$, according to the isomorphism $\phi_\e$ in Corollary \ref{tildeW} and \ref{EndV}). Since the Whittaker vector in $\tildeW_\e(\eta;\k,c)$ is also a Whittaker vector in $V(\eta;\k,c)$, we must have $n_\e=n_-$ and $n'+n_\e=n_+$.
If $\e=+$, this is impossible and hence $\tildeW_+(\eta;\k,c)$ is irreducible, proving (b).

\smallskip
(c) follows from a combination of (a), (b) and the fact that $V(\eta;\k,c)$ is indecomposable by Corollary \ref{EndV}.

\smallskip
(d) Continuing the argument in (b) and taking $\e=-$. 
Recall that $n_-<n_+$ implies $\k=\pm q^{n_++n_--3}$ and
$$\k_-=\pm q^{n_+-2n_--3},\quad c_-=\pm(q-q^{-1})^{-2}(q^{-n_+}+q^{n_+}).$$ 
%We have $$h_n(\k_-,c_-)=(q^{-n-n_-}-q^{n+n_-})(q^{3-2n_--n}\k-q^{2n_-+n-3}\k^{-1}).$$
It is easy to check that $n'=n_+-n_-$ is the only integer such that $h_{n'}(\k_-,c_-)=0$.
%
%Recall that , we see $h_n(\k_\e,c_\e)=0$ if and only if $n=n_++n_--2n_\e$.
%
%If $\e=+$, then $h_n(\k_+,c_+)\neq0$ for any $n\in\N$, that is,
%$(\k_+,c_+)$ is non-critical, which implies that $V(\eta;\k_+,c_+)$ is irreducible by Theorem \ref{non-critical}. 
%We obtain that $\tildeW_+(\eta;\k,c)$ is irreducible, as desired.
%
%
%there is exactly one $n\in\N$ such that $h_n(\k_-,c_-)=0$, namely $n=n_+-n_-$. 
Then $\tildeW_-(\eta;\k,c)\cong V(\eta;\k_-,c_-)$ has a unique nonzero proper submodule by (c). 
%More precisely, there is a submodule $W'(\eta;\k,c)\subseteq \tildeW_-(\eta;\k,c)$ such that
%$W'(\eta;\k,c)\cong\tildeW_+(\eta;\k',c')=\tildeW_-(\eta;\k',c')\subseteq V(\eta;\k',c')$. 
%
%We claim $W'(\eta;\k,c)=\tildeW_+(\eta;\k,c)$, which is equivalent to $\tildeW_+(\eta;\k,c)\subseteq \tildeW_-(\eta;\k,c)$. Indeed, i

If $\tildeW_+(\eta;\k,c)\not\subseteq \tildeW_-(\eta;\k,c)$, then 
the minimality of $\tildeW_+(\eta;\k,c)$ and maximality of $\tildeW_-(\eta;\k,c)=0$ imply 
$\tildeW_+(\eta;\k,c)\cap \tildeW_-(\eta;\k,c)=0$ and
$$V(\eta;\k,v)=\tildeW_+(\eta;\k,c)\oplus \tildeW_-(\eta;\k,c),$$
contradicting the fact that $V(\eta;\k,c)$ is indecomposable by Corollary \ref{EndV}.
So $\tildeW_+$ is the only nonzero proper submodule of $\tildeW_-$. 

Now let $\bar N\subseteq V(\eta;\k,c)$ be an arbitrary nonzero proper submodule other than $\tildeW_\pm$. 
If $\bar N\cap\tildeW_-=0$, then $V(\eta;\k,c)=\bar N\oplus\tildeW_-$, contradiction.
So $\bar N\cap\tildeW_-=\tildeW_+$ and hence $\tildeW_+$ is a proper submodule of $\bar N$. 
By the argument in (a) for $\e=+$, we have $\bar w=aK_2^{l_0}\bar v_\eta+\bar u(n_-,l_0,1)\in \bar N$ for some $a\in\C$ and $l_0\in\Z$. Since $K_2^{l_0}\bar v_\eta$ and $\bar u(n_-,l_0,1)$ are eigenvectors of different eigenvalues with respect to the action of $K$ and $C_1$, we see $\bar u(n_-,l_0,1)\in\bar N$, forcing $\bar u_-\in \bar N$ and $\tildeW_-\subseteq\bar N$, contradiction. (d) is proved.
%
%Then $W'\not\subseteq\tildeW_-$ and $V(\eta;\k,c)=W'+\tildeW_-$, which gives 
%$$V(\eta;\k,c)/(W'\cap\tildeW_-)=\big(W'/(W'\cap\tildeW_-)\big)\oplus\big(\tildeW_-/(W'\cap\tildeW_-)\big),$$ contradicting the fact that $\dim\End_U(V(\eta;\k,c))=1$.
\end{proof}

By the proof of the above theorem, we can determine the Whittaker vectors of the quotient modules of $V(\eta;\k,c)$.

\begin{corollary} \label{vector'}
Let $l\in\Z$, $\bar v_\eta$ be the corresponding image of $v_\eta$ in corresponding quotient modules of $M(\eta)$ and other notation be as before.
\begin{itemize}
\item[(a)] If $(\k,c)$ is non-critical, the set of all Whittaker vectors of type $(\a q^l,0)$
in $V(\eta;\k,c)$ is $\spn\{K_2^l\bar v_\eta\}$.
\item[(b)] If $(\k,c)$ is critical, the set of all Whittaker vectors of type $(\a q^l,0)$
in $V(\eta;\k,c)$, $V(\eta;\k,c)/\tildeW_+$ and $V(\eta;\k,c)/\tildeW_-$ are respectively
$$\spn\{K_2^l\bar v_\eta, \bar u(n_\pm,l,1)\},\quad \spn\{K_2^l\bar v_\eta, \bar u(n_-,l,1)\},\quad 
\spn\{K_2^l\bar v_\eta\}.$$
\end{itemize}
\end{corollary}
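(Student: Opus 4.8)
The plan is to read off all three assertions directly from Theorem \ref{vector} and from the proof of Theorem \ref{critical}, since that proof already catalogues which of the Whittaker vectors of $V(\eta;\k,c)$ survive in the two quotients. First I would recall that for any ideal $I\ideal\C[K^{\pm1},C_1]$, Theorem \ref{vector} identifies the Whittaker vectors of type $(\a q^l,0)$ in $V(\eta,I)$ as the span of $K_2^l\C[K^{\pm1},C_1]\bar v_\eta$ together with the vectors $\bar u(n,l,Q)$ for $n\in\N$ and $Q\in\hat I_n\setminus I$. For part (a), take $I=J(\k,c)$ with $(\k,c)$ non-critical: then $\hat J_n=J$ for every $n$ by definition of non-critical, so $\hat J_n\setminus J=\emptyset$ and no vectors $\bar u(n,l,Q)$ occur; the Whittaker vectors are exactly $\spn\{K_2^l\C[K^{\pm1},C_1]\bar v_\eta\}=\spn\{K_2^l\bar v_\eta\}$ since $K$ and $C_1$ act by the scalars $\k,c$ on $\bar v_\eta$. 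For part (b) with $(\k,c)$ critical, $\hat J_n\setminus J$ is nonempty precisely for $n\in\{n_+,n_-\}$, and modulo $J$ the quotient $\hat J_n/J$ is spanned by the constant polynomial $1$; so the Whittaker vectors of $V(\eta;\k,c)$ are $\spn\{K_2^l\bar v_\eta,\bar u(n_+,l,1),\bar u(n_-,l,1)\}$, using $u(n_\e,l,1)=q^{2n_\e l}K_2^l u_\e$ and Corollary \ref{C_1 action} for the eigenvalue bookkeeping.

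Next I would handle the two quotients of $V(\eta;\k,c)$. Since $\tildeW_+=\tildeW_+(\eta;\k,c)$ is generated by $\bar u_+=\bar u(n_+,0,1)$ and has basis $\{F_2^jF_3^kK_2^l\bar u_+\}$ by Corollary \ref{tildeW}, passing to $V(\eta;\k,c)/\tildeW_+$ kills exactly the $\bar u(n_+,l,\cdot)$ family (those of top degree $(n_+,n_+,l)$) while leaving the images of $K_2^l\bar v_\eta$ and $\bar u(n_-,l,1)$ linearly independent, because they have distinct degrees and are eigenvectors for $(K,C_1)$ with distinct eigenvalues by Corollary \ref{C_1 action}. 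To see that no \emph{new} Whittaker vectors appear in the quotient, I would invoke precisely the argument used in the proof of Theorem \ref{critical}(a): a Whittaker vector $\bar w$ in the quotient lifts to some $w\in M(\eta)$ with $(E_1-\a q^{l_0})w,\,E_2w\in\hatW_+$; decomposing $w$ into its homogeneous pieces $w_{n_i}\in M_{n_i}$ and applying Lemma \ref{claim} and Lemma \ref{compute}(5), each $w_{n_i}$ must be a multiple of one of $u(n_+,l_0,1)$, $u(n_-,l_0,1)$, or $K_2^{l_0}v_\eta$ — there are no other possibilities by Lemma \ref{E_2 action}. Modulo $\hatW_+$ only the last two types survive, giving the claimed span $\spn\{K_2^l\bar v_\eta,\bar u(n_-,l,1)\}$. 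The same reasoning, now with $\e=-$, shows $V(\eta;\k,c)/\tildeW_-$ has Whittaker vectors $\spn\{K_2^l\bar v_\eta\}$ only, since $\tildeW_-$ contains both $\bar u_-$ and (when $n_-<n_+$) $\bar u_+$, and the leftover top-degree pieces that could produce a Whittaker vector are all absorbed.

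I expect the only subtle point — hence the main obstacle — to be the ``no new Whittaker vectors in the quotient'' direction, i.e. verifying that a Whittaker vector of a proper quotient $V(\eta;\k,c)/\tildeW_\e$ cannot have a homogeneous component of degree $n\notin\{0,n_+,n_-\}$. This is exactly the content already extracted in the proof of Theorem \ref{critical}(a): if $w$ projects to a Whittaker vector then $(E_1-\a q^{l_0})w$ and $E_2w$ lie in $\hatW_\e$, and because these elements are supported on $F_2^jF_3^kK_2^l\C[K^{\pm1},C_1]v_\eta$ with $0\le k<n_\e$, decomposition \eqref{hat W} forces them into $W(\eta;\k,c)$; then Lemma \ref{claim} splits $w$ into homogeneous pieces each satisfying the hypotheses of Lemma \ref{E_2 action}, pinning each piece down to the $u(n_\pm,l_0,1)$ or $K_2^{l_0}v_\eta$ normal form. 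Everything else is routine: linear independence follows from the distinct-degree/distinct-eigenvalue observations, and the ideal-theoretic input for part (a) is just the definition of non-critical. So the corollary is essentially a repackaging of material already proved, and I would present it as such rather than redoing the filtration analysis from scratch.
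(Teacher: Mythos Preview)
Your proposal is correct and follows exactly the route the paper takes: part (a) and the first claim of (b) are read off from Theorem \ref{vector} applied to $I=J(\k,c)$, and the remaining two claims of (b) are obtained by rerunning the minimal-degree/homogeneous-decomposition argument from the proof of Theorem \ref{critical}(a), which forces every Whittaker vector in $M(\eta)/\hatW_\e$ to have homogeneous pieces only in degrees $\{0,n_+,n_-\}$. The paper compresses all of this into two lines, but your unpacking matches it step for step, including the key reduction from $\hatW_\e$ to $W(\eta;\k,c)$ via \eqref{hat W}.
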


\begin{proof} Part (a) and the first assertion of (b) follow from Theorem \ref{vector} and 
other assertions of (b) follow from the proof of Theorem \ref{critical} (a). 
\end{proof}

\begin{remark}\label{g_n}
Let notation be as in Theorem \ref{critical}. Suppose $n_-<n_+$.
%
%(1) If we take $\e=+$ in the proof of Theorem \ref{critical} (a), then the only possible situation is that $p=0$, $n_0=n_-$ and $w$ is a nonzero multiple of $u(n_-,l_0,1)$ by the result in (d). 
%
%(2) For convenience, 
If we denote
$$g_n=g_n(F_2,F_3,K_2,K)=\sum_{k=0}^{n}\sum_{j=0}^{n-k}a_{kj}(n)F_2^{n-k}F_3^kK_2^{2j}K^{-2j-k},\ \forall\ n\in\N,$$
then $u(n,0,Q)=g_nQv_\eta$ for any $Q\in\C[K,C_1]$. In particular, $u_\pm=g_{n_\pm}v_\eta$. 

By the proof of Theorem \ref{critical}, $V(\eta;\k_-,c_-)$ has only one Whittaker vector of type $(\a,0)$ up to a scalar, 
which is just the image of $u(n_+-n_-,0,1)=g_{n_+-n_-}v_\eta\in M(\eta)$ in $V(\eta;\k_-,c_-)$ and corresponds to 
$g_{n_+-n_-}\bar u_-$ under the isomorphism $V(\eta;\k_-,c_-)\cong\tildeW_-$ (see Corollary \ref{tildeW}). Then we have $g_{n_+-n_-}\bar u_-=a\bar u_+$ for some $a\in\C$. 
Comparing the coefficients, we get $a=1$, which is equivalent to 
$g_{n_+-n_-}g_{n_-}\bar v_\eta=g_{n_+}\bar v_\eta$, or, 
$g_{n_+-n_-}g_{n_-}=g_{n_+}$. This motivates us to deduce the following simpler formula of $u(n,l,Q)$.
%Substituting the definition of $u_\e=u(n_\e,0,1)$ in \eqref{u_n}, we obtain
%$$\aligned
%&\Big(\sum_{k=0}^{n_+-n_-}\sum_{j=0}^{n_+-n_--k}a_{kj}(n_+-n_-)F_2^{n_+-n_--k}F_3^kK_2^{2j}K^{-2j-k}\Big)\Big(\sum_{k=0}^{n_+}\sum_{j=0}^{n_+-k}a_{kj}(n_+)F_2^{n_+-k}F_3^kK_2^{2j}K^{-2j-k}\Big)\\
%=&\bar u_+\\
%=&\sum_{k=0}^{n_+}\sum_{j=0}^{n_+-k}a_{kj}(n_+)F_2^{n_+-k}F_3^kK_2^{2j}K^{-2j-k}.
%\endaligned$$
%has degree $(n_+-n_-,n_+-n_-,0)$ by Theorem \ref{vector} and we denote by $\bar u'$. Then $\bar u'=$
\end{remark}

\begin{corollary}\label{u(n)}
For any $n\in\N, l\in\Z, Q\in\C[K^{\pm1},C_1]$, we have $u(n,l,Q)=q^{2nl}K_2^{l}g^nQv_\eta$, where
$$g=g_1(F_2,F_3,K_2,K)=F_2(1-q^{-2}K_2^{2}K^{-2})+\a(1-q^{-2})F_3K^{-1}.$$
\end{corollary}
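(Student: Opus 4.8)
The plan is to reduce the statement to an identity inside the algebra $U$ itself. With $g=g_1$ and, as in Remark \ref{g_n},
$$g_n=g_n(F_2,F_3,K_2,K)=\sum_{k=0}^{n}\sum_{j=0}^{n-k}a_{kj}(n)\,F_2^{n-k}F_3^kK_2^{2j}K^{-2j-k}\qquad(n\in\N),$$
I would prove $g_n=g^n$ in $U$ by induction on $n$. This suffices: putting $l=0$ in \eqref{u(n,l,Q)} gives $u(n,0,Q)=g_nQv_\eta$, hence $u(n,0,Q)=g^nQv_\eta$, and combined with $u(n,l,Q)=q^{2nl}K_2^lu(n,0,Q)$ (noted right after \eqref{u(n,l,Q)}) this is exactly the asserted formula $u(n,l,Q)=q^{2nl}K_2^lg^nQv_\eta$. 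The base case $n=1$ is the direct evaluation $a_{00}(1)=1$, $a_{01}(1)=-q^{-2}$, $a_{10}(1)=\a(1-q^{-2})$, which recovers $g=F_2(1-q^{-2}K_2^2K^{-2})+\a(1-q^{-2})F_3K^{-1}$.

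For the inductive step I would compute $g^{n+1}=g\cdot g_n$ by moving each of the three monomials of $g$ to the left past a general term $F_2^{n-k}F_3^kK_2^{2j}K^{-2j-k}$ of $g_n$, using only the relations $F_2F_3=qF_3F_2$, $KF_2=q^{-3}F_2K$, $KF_3=q^{-3}F_3K$, $K_2F_2=q^{-2}F_2K_2$, $K_2F_3=q^{-1}F_3K_2$ and $[K,K_2]=0$ from Section 2. This yields
$$g\cdot g_n=\sum_{k,j}a_{kj}(n)\Big(F_2^{n-k+1}F_3^kK_2^{2j}K^{-2j-k}-q^{2n+2k-2}F_2^{n-k+1}F_3^kK_2^{2j+2}K^{-2j-k-2}+\a(1-q^{-2})q^{2n+k}F_2^{n-k}F_3^{k+1}K_2^{2j}K^{-2j-k-1}\Big).$$
The monomials $F_2^aF_3^bK_2^cK^d$ are linearly independent in $U$: one has $F_2^aF_3^b=q^{ab}F_3^bF_2^a$ and $K_2^cK^d=K_1^dK_2^{c+2d}$, so they agree with PBW basis vectors up to nonzero scalars. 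Hence, comparing the coefficient of $F_2^{n+1-k}F_3^kK_2^{2j}K^{-2j-k}$ on both sides of $g_{n+1}=g\cdot g_n$, the induction is reduced to the scalar recursion
$$a_{kj}(n+1)=a_{kj}(n)-q^{2n+2k-2}a_{k,j-1}(n)+\a(1-q^{-2})q^{2n+k-1}a_{k-1,j}(n),$$
with the convention $a_{kj}(n):=0$ unless $0\le k$, $0\le j$ and $k+j\le n$.

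Verifying this recursion from the closed form of $a_{kj}(n)$ is the main, and I expect the only, obstacle, though it is purely computational bookkeeping. After cancelling the common factor $(-1)^j\a^k(q^2-1)^k$ and writing $A_{kj}(n)=q^{j(n-3)+\frac{1}{2}k(2n+2j+k-7)}\qchoose{n}{k}\qchoose{n-k}{j}$, the recursion takes the form $A_{kj}(n+1)=A_{kj}(n)+q^{2n+2k-2}A_{k,j-1}(n)+q^{2n+k-3}A_{k-1,j}(n)$; since $A_{kj}(n)$ is a power of $q$ times the $q$-trinomial $\qchoose{n}{k}\qchoose{n-k}{j}$, this follows from the $q$-Pascal-type decomposition
$$\qchoose{n+1}{k}\qchoose{n+1-k}{j}=q^{-k-j}\qchoose{n}{k}\qchoose{n-k}{j}+q^{n+1-j}\qchoose{n}{k}\qchoose{n-k}{j-1}+q^{n+1-k-2j}\qchoose{n}{k-1}\qchoose{n-k+1}{j},$$
itself a reformulation of the one-line telescoping identity $q^{-k-j}[n+1-k-j]+q^{n+1-j}[j]+q^{n+1-k-2j}[k]=[n+1]$. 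Once this is in hand, all that remains is to multiply by the prefactor $q^{j(n-2)+\frac{1}{2}k(2n+2j+k-5)}$ of $A_{kj}(n+1)$ and check that the three resulting $q$-exponents agree with those of $A_{kj}(n)$, $A_{k,j-1}(n)$ and $A_{k-1,j}(n)$ — a short direct computation. (The simpler identity $g_{n_+-n_-}g_{n_-}=g_{n_+}$ highlighted in Remark \ref{g_n} is consistent with, but does not replace, this argument.)
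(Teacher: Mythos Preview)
Your proposal is correct and follows exactly the strategy the paper itself sketches: reduce to $g_n=g_1^n$ and prove $g\cdot g_n=g_{n+1}$ by induction on $n$. The paper's proof merely says ``this can be done straightforward by induction'' without supplying any of the commutation formulas, the scalar recursion for $a_{kj}(n)$, or the $q$-trinomial identity; you have filled in precisely those details, and I checked that your exponents, signs, and the telescoping identity $q^{-k-j}[n+1-k-j]+q^{n+1-j}[j]+q^{n+1-k-2j}[k]=[n+1]$ are all correct.
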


\begin{proof} 
By the definition of $u(n,l,Q)$, we need only to prove $g_n=g_1^n$, 
%where $g_1=F_2(1-q^{-2}K_2^{2}K^{-2})+\a(1-q^{-2})F_3K^{-1}$, 
or equivalently, $g_{n+1}=g_{n}g_1$ for any $n\in\N$. This can be done straightforward by induction.
\end{proof}
%
%\begin{equation}\label{u_n}
%u(n,l,Q)=q^{2nl}K_2^l\sum_{k=0}^{n}\sum_{j=0}^{n-k}a_{kj}(n)F_2^{n-k}F_3^k
%K_2^{2j}K^{-2j-k}Qv_\eta,
%\end{equation}
%where $a_{kj}(n)=(-1)^j\a^k(q^2-1)^kq^{j(n-3)}q^{\frac{1}{2}k(2n+2j+k-7)}\qchoose{n}{k}\qchoose{n-k}{j}$.
%It is easy to check that $u(n,l,Q)=q^{2nl}K_2^lu(n,0,Q)$ for any $n\in\Z_+, l\in\Z$ and $Q\in\C[K^{\pm1},C_1]$.

Finally, we conclude this paper with a characterization of simple Whittaker modules over $U_q(\sl_3)$.
Let us first recall the center of $U_q(\sl_3)$ from \cite{LXZ}. Set 
$$\tilde F_3=F_2F_1-qF_1F_2\ \text{and}\ \tilde E_3=E_2E_1-q^{-1}E_1E_2.$$ 
Then the center of $U_q(\sl_3)$ is generated by $X_1X_2, X_1^3K, X_2^3K^{-1}$, where
%$$X_1=q^{-2}K^{-1}+q^{2}K_1+K_1^{-1}+(q-q^{-1})^2\big(qF_1E_1+q^{-1}F_2E_2K_1^{-1}K_2^{-1}-F_{3}E_{3}K_2^{-1}\big),$$
%$$X_2=q^{2}K+K_1+q^{-2}K_1^{-1}
% +(q-q^{-1})^2\big(qF_2E_2K_1K_2+q^{-1}F_1E_1-\tilde F_3\tilde E_3K_2\big).$$
$$X_1=q^{-3}K^{-1}+qK_1+q^{-1}K_1^{-1}+(q-q^{-1})^2\big(F_1E_1+q^{-2}F_2E_2K_1^{-1}K_2^{-1}-q^{-1}F_{3}E_{3}K_2^{-1}\big),$$
$$X_2=q^{3}K+qK_1+q^{-1}K_1^{-1}+(q-q^{-1})^2\big(F_1E_1+q^2F_2E_2K_1K_2-q\tilde F_3\tilde E_3K_2\big).$$

\begin{theorem}\label{simple}
Any irreducible Whittaker module of type $\eta$ is isomorphic to one of:
\begin{itemize}
\item[(a)] $V(\eta;\k,c)=M(\eta)/W(\eta;\k,c)$ with $(\k,c)$ non-critical;
\item[(b)] $V(\eta;\k,c)/V(\eta;\k_-,c_-)\cong V(\eta;\k,c)/\tildeW_-(\eta;\k,c)\cong M(\eta)/\hatW_-(\eta;\k,c)$ with $(\k,c)$ critical.
\end{itemize}
\end{theorem}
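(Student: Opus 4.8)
The plan is to show that every irreducible Whittaker module $W$ of type $\eta$ is an irreducible quotient of $M(\eta)$, and then to identify all such irreducible quotients using the submodule analysis already carried out. First I would recall that $W$ is generated by a Whittaker vector $w$ of type $(\a,0)$, so there is a surjective $U$-module homomorphism $M(\eta)\twoheadrightarrow W$ sending $v_\eta$ to $w$; hence $W\cong M(\eta)/N$ for some maximal submodule $N$ of $M(\eta)$. The task is therefore to enumerate the maximal submodules $N$ of $M(\eta)$.

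Next I would reduce to the submodules $W(\eta;\k,c)$ and $\hatW_\e(\eta;\k,c)$. Let $N$ be a maximal submodule and put $I=I(N)\ideal\C[K^{\pm1},C_1]$, so $W(\eta,I)\subseteq N$. Two cases arise according to whether $I$ is non-critical or critical. If $I$ is non-critical, Proposition \ref{submodule} gives $N=W(\eta,I)$; maximality of $N$ then forces $I$ to be a maximal ideal of $\C[K^{\pm1},C_1]$, i.e. $I=J(\k,c)$ for some $\k\neq 0,c$, and by Theorem \ref{non-critical} (since $I$ non-critical means $(\k,c)$ non-critical) $W(\eta;\k,c)$ is indeed maximal; this yields case (a). If $I$ is critical, pick $n\in\N$ with $h_n(\k,c)=0$ where $J(\k,c)\supseteq$ (an appropriate maximal ideal): more carefully, $N\supseteq W(\eta,I)\supseteq W(\eta, J)$ for the maximal ideal $J=J(\k,c)$ lying over $I$, and since $N$ is proper, $N/W(\eta;\k,c)$ is a proper submodule of $V(\eta;\k,c)$; by Theorem \ref{critical} the only proper submodules of $V(\eta;\k,c)$ are $\tildeW_+(\eta;\k,c)$ and $\tildeW_-(\eta;\k,c)$, and maximality of $N$ picks out the maximal one $\tildeW_-(\eta;\k,c)$, so $N=\hatW_-(\eta;\k,c)$ and $W\cong M(\eta)/\hatW_-(\eta;\k,c)$. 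Finally I would record the identifications $M(\eta)/\hatW_-(\eta;\k,c)\cong V(\eta;\k,c)/\tildeW_-(\eta;\k,c)$ (third isomorphism theorem) and $\tildeW_-(\eta;\k,c)\cong V(\eta;\k_-,c_-)$ (Corollary \ref{tildeW}), so the quotient $V(\eta;\k,c)/\tildeW_-(\eta;\k,c)$ can also be written as $V(\eta;\k,c)/V(\eta;\k_-,c_-)$ in the sense that $V(\eta;\k_-,c_-)$ is realized as the submodule $\tildeW_-$; this gives case (b).

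I expect the one point needing genuine care — the main obstacle — is the case $I=I(N)$ \emph{critical but not maximal}: a priori $I$ could be a non-maximal critical ideal, and one must argue that $N$ being \emph{maximal} forces the radical-type structure down to a single maximal ideal $J(\k,c)$ over which $I$ sits, so that $V(\eta;\k,c)=M(\eta)/W(\eta;\k,c)$ is the relevant quotient and Theorem \ref{critical} applies verbatim. The cleanest way is: since $N$ is maximal, $M(\eta)/N$ is irreducible, hence the central elements $K,C_1$ (more precisely the center of $U$, via the generators $X_1X_2,X_1^3K,X_2^3K^{-1}$ recalled before the theorem) act on the cyclic generator by scalars, which pins down a unique pair $(\k,c)$ with $J(\k,c)\subseteq I(N)=I$; then $N\supseteq W(\eta;\k,c)$ and one descends to $V(\eta;\k,c)$ as above. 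Once this reduction is in place the rest is bookkeeping with Theorems \ref{non-critical}, \ref{critical} and Corollary \ref{tildeW}, together with the observation that the two families (a) and (b) are genuinely irreducible — (a) by Theorem \ref{non-critical}, and (b) because $V(\eta;\k,c)/\tildeW_-(\eta;\k,c)$ is the simple top of the length-two module $V(\eta;\k,c)$ by Theorem \ref{critical}(c),(d).
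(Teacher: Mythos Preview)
Your overall strategy is the paper's: use Schur's lemma on the genuine center of $U$ to produce a pair $(\kappa,c)$, obtain a surjection $V(\eta;\kappa,c)\twoheadrightarrow M(\eta)/N$, and then read off the answer from Theorems~\ref{non-critical} and~\ref{critical}. You also correctly locate the only nontrivial step. However, the sentence ``the central elements $K,C_1$ \dots\ act on the cyclic generator by scalars, which pins down a unique pair $(\kappa,c)$ with $J(\kappa,c)\subseteq I(N)$'' is where the argument slips: $K$ and $C_1$ are \emph{not} central in $U$ (for instance $KE_2=q^{3}E_2K$), so Schur's lemma does not make them act as scalars on $\bar v_\eta$, and the inclusion $J(\kappa,c)\subseteq I(N)$ does not follow from the center alone.

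What the paper actually does is subtler and worth absorbing. On a Whittaker vector $w$ (so $E_2w=0$) the central generators simplify to polynomials in $K$ and $C_1$: one checks $X_1w=\big((q-q^{-1})^2C_1+q^{-3}K^{-1}\big)w$ and $X_2w=\big((q-q^{-1})^2C_1+q^{3}K\big)w$. Scalar action of $X_1X_2,\ X_1^3K,\ X_2^3K^{-1}$ then gives polynomial relations in $K,C_1$ satisfied by $w$; from these one extracts $\kappa$ and a polynomial $f_1(K)$ with $f_1(K)w\ne0$ and $(K-\kappa)f_1(K)w=0$, and likewise $c$ and $f_2(C_1)$. The point is that $w'=f_2(C_1)f_1(K)w$ is \emph{again} a Whittaker vector (because $K$ and $C_1$ preserve the conditions $E_1(\,\cdot\,)=\alpha(\,\cdot\,)$, $E_2(\,\cdot\,)=0$), and it is this \emph{new} $w'$ that satisfies $Kw'=\kappa w'$, $C_1w'=cw'$. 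One then maps $M(\eta)\to V$ by $v_\eta\mapsto w'$, obtaining $W(\eta;\kappa,c)\subseteq\ker$, and finishes exactly as you say. Note this does \emph{not} show $W(\eta;\kappa,c)\subseteq N$ for the original $N$; that stronger statement is Theorem~\ref{maximal}, proved afterwards using Corollary~\ref{vector'} (uniqueness of the Whittaker vector in the irreducible quotient). For Theorem~\ref{simple} you only need the surjection $V(\eta;\kappa,c)\to V$, so your detour through $I(N)$ and the claim $J(\kappa,c)\subseteq I(N)$ is unnecessary here and should be dropped.
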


\begin{proof} 
Let $V$ be an irreducible Whittaker $U$-module of type $\eta$. 
Then $X_1X_2, X_1^3K, X_2^3K^{-1}$ act as scalars on $V$, say,
$x, a^3, b^3\in\C$. Then $x^3=a^3b^3$, so we may assume $x=ab$ by adjusting the value of $a$ with a cubic root multiple. 

Let $w$ be a nonzero type-$\eta$ Whittaker vector of $V$ and noticing $E_2w=0$, we have
\begin{equation}\label{X}\aligned
& X_1X_2w=((q-q^{-1})^2C_1+q^{-3}K^{-1})((q-q^{-1})^2C_1+q^{3}K)w=abw, \\
& X_1^3Kw=K((q-q^{-1})^2C_1+q^{-3}K^{-1})^3w=a^3w,\\
& X_2^3K^{-1}w=K^{-1}((q-q^{-1})^2C_1+q^{3}K)^3w=b^3w,
\endaligned\end{equation}
Combining these expressions, we can deduce
$$(q^{18}K^6+(3ab-3-q^{-3}b^3)q^{12}K^4+(q^3a^3+3-3ab)q^{6}K^2-1)w=0.$$
Then there exist some nonzero $\k\in\C$ and polynomial $f_1(K)\in\C[K]$ such that $f_1(K)w\neq0$ and $(K-\k)f_1(K)w=0$. 

On the other hand, by \eqref{X} we also have 
$$\k((q-q^{-1})^2C_1+q^{-3}\k^{-1})^3f_1(K)w=K((q-q^{-1})^2C_1+q^{-3}K^{-1})^3f_1(K)w=a^3f_1(K)w.$$
Similarly, we can find $c\in\C$ and polynomial $f_2(C_1)\in\C[C_1]$ such that $f_2(C_1)f_1(K)w\neq0$ and $(C_1-c)f_2(C_1)f_1(K)w=0$.

Since $w'=f_2(C_1)f_1(K)w$ is still a nonzero Whittaker vector of type $\eta$, there exists 
a surjective $U$-module homomorphism $\phi: M(\eta)\rightarrow V$ such that $\phi(v_\eta)=w'$.
It is clear that $W(\eta;\k,c)\subseteq \ker\phi$. 
So $\phi$ induce a nonzero $U$-module homomorphism $\bar\phi: V(\eta;\k,c)\rightarrow V$. 
As a result, $V\cong V(\eta;\k,c)$ if $(\k,c)$ is non-critical and 
$V\cong V(\eta;\k,c)/\tildeW_-(\eta;\k,c)$ if $(\k,c)$ is critical by Theorem \ref{non-critical} and Theorem 
\ref{critical} respectively. 
\end{proof}

\begin{theorem}\label{maximal}
Any maximal submodule of $M(\eta)$ is one of:
\begin{itemize}
\item[(a)] $W(\eta;\k,c)$ with $(\k,c)$ non-critical;
\item[(b)] $\hatW_-(\eta;\k,c)$ with $(\k,c)$ critical.
\end{itemize}
\end{theorem}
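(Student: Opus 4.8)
The plan is to show that any maximal submodule $N$ of $M(\eta)$ arises as one of the two listed types by first passing to the ideal $I = I(N) \unlhd \C[K^{\pm1},C_1]$ and analyzing the quotient $M(\eta)/N$, which is an irreducible Whittaker module. First I would observe that $W(\eta, I) \subseteq N$ and that the quotient $M(\eta)/N$ is irreducible, so by Theorem \ref{simple} it is isomorphic either to $V(\eta;\k,c)$ with $(\k,c)$ non-critical, or to $M(\eta)/\hatW_-(\eta;\k,c)$ with $(\k,c)$ critical. In the first case, the kernel of the induced surjection $M(\eta)\twoheadrightarrow V(\eta;\k,c)$ must be exactly $W(\eta;\k,c)$ since $W(\eta;\k,c)$ is maximal by Theorem \ref{non-critical} and is contained in $N$; hence $N = W(\eta;\k,c)$. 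In the second case, $N$ contains $W(\eta;\k,c)$ and the composed surjection has image $M(\eta)/\hatW_-(\eta;\k,c)$, which forces $N \supseteq \ker = \hatW_-(\eta;\k,c)$; but $\hatW_-(\eta;\k,c)$ is itself maximal (this is Theorem \ref{critical} (a), which shows $\tildeW_-(\eta;\k,c)$ is maximal in $V(\eta;\k,c)$, equivalently $\hatW_-(\eta;\k,c)$ is maximal in $M(\eta)$), so $N = \hatW_-(\eta;\k,c)$.

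A cleaner route, which I would prefer to present, avoids invoking the full classification and instead works directly with $I(N)$. The key step is: given maximal $N$, consider $I = I(N)$ and let $\hat I$ be the minimal non-critical ideal containing $I$ (defined just before Theorem \ref{vector}). If $I$ is non-critical, then Proposition \ref{submodule} gives $N = W(\eta, I)$, and maximality of $N$ forces $I$ to be a maximal ideal of $\C[K^{\pm1},C_1]$, i.e. $I = J(\k,c)$ for some $(\k,c)$; moreover $J(\k,c)$ non-critical means $(\k,c)$ is non-critical, landing in case (a). If $I$ is critical, I would argue that $N$ must contain some $\hatW_-(\eta;\k,c)$: pick $(\k,c)$ critical with $J(\k,c)$ minimal over $I$ (using that a critical ideal is "detected" by a critical maximal ideal), note $W(\eta;\k,c) \subseteq N$, and then use the structure of $M(\eta)/W(\eta;\k,c) = V(\eta;\k,c)$ from Theorem \ref{critical}: its proper submodules are among $\tildeW_+(\eta;\k,c) \subseteq \tildeW_-(\eta;\k,c)$, so the image $\bar N$ of $N$ is one of these, and since $N$ is maximal and $\tildeW_-$ is the unique maximal submodule of $V(\eta;\k,c)$, we get $\bar N = \tildeW_-(\eta;\k,c)$, hence $N = \hatW_-(\eta;\k,c)$.

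The main obstacle is the critical case, specifically producing the right $(\k,c)$ and verifying that $I(N)$ critical really does force $W(\eta;\k,c) \subseteq N$ for an appropriate critical pair. Concretely, one needs: if $I$ is critical then there is a critical maximal ideal $J(\k,c) \supseteq I$ with $h_n(\k,c)=0$ for some $n$, and $I(N)$ being critical (rather than just $I(N)$ being non-maximal) means $N$ genuinely fails to be of the form $W(\eta,\cdot)$, so the obstruction element — a Whittaker vector $\bar u(n,l,Q)$ with $Q \in \hat I_n \setminus I$, produced via Theorem \ref{vector} — must already lie in $N$. This Whittaker vector generates (together with $W(\eta;\k,c)$) a submodule of the form $\hatW_-(\eta;\k,c)$ after choosing $n = n_-$, and one checks this is contained in $N$ and maximal, forcing equality.

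\medskip
\noindent\textbf{Proof of Theorem \ref{maximal}.}
Let $N$ be a maximal submodule of $M(\eta)$. Then $M(\eta)/N$ is an irreducible Whittaker module of type $\eta$, so by Theorem \ref{simple} either $M(\eta)/N \cong V(\eta;\k,c)$ for some non-critical $(\k,c)$, or $M(\eta)/N \cong M(\eta)/\hatW_-(\eta;\k,c)$ for some critical $(\k,c)$.

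In the first case, the irreducibility gives a surjection $\phi: M(\eta) \to V(\eta;\k,c)$ with $\ker\phi = N$. By the construction in the proof of Theorem \ref{simple}, $\phi$ factors through the canonical map $M(\eta) \to V(\eta;\k,c) = M(\eta)/W(\eta;\k,c)$, so $W(\eta;\k,c) \subseteq N$. Since $(\k,c)$ is non-critical, $W(\eta;\k,c)$ is maximal by Theorem \ref{non-critical}; as $N \neq M(\eta)$, we conclude $N = W(\eta;\k,c)$, which is case (a).

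In the second case, we likewise get a surjection $\phi: M(\eta) \to M(\eta)/\hatW_-(\eta;\k,c)$ with $\ker\phi = N$, and by construction $\phi$ factors through $M(\eta) \to M(\eta)/W(\eta;\k,c) = V(\eta;\k,c)$, so $W(\eta;\k,c) \subseteq N$. The image $\bar N$ of $N$ in $V(\eta;\k,c)$ is a maximal submodule, hence by Theorem \ref{critical} (a) and the uniqueness of the composition series in (c) and (d) we have $\bar N = \tildeW_-(\eta;\k,c)$, which means $N = \hatW_-(\eta;\k,c)$, that is, case (b).
$\hfill\qed$
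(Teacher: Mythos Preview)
Your overall approach matches the paper's: invoke Theorem \ref{simple} to identify the irreducible quotient $M(\eta)/N$, then pin down $N$ as the canonical kernel. However, there is a genuine gap in the step where you conclude $W(\eta;\k,c)\subseteq N$. You justify this by saying ``by the construction in the proof of Theorem \ref{simple}, $\phi$ factors through the canonical map,'' but that proof only produces \emph{some} surjection $M(\eta)\to V$ (sending $v_\eta$ to $w'=f_2(C_1)f_1(K)w$) which factors through $V(\eta;\k,c)$; it does not show that \emph{your} map $\phi$ (the one with $\ker\phi=N$) does so. A priori $M(\eta)$ could have several maximal submodules with isomorphic simple quotients, and nothing you have written rules that out.

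The missing ingredient is precisely Corollary \ref{vector'}: in each of the simple modules $V(\eta;\k,c)$ (non-critical) and $V(\eta;\k,c)/\tildeW_-(\eta;\k,c)$ (critical), the space of Whittaker vectors of type $(\a,0)$ is one-dimensional, spanned by the image of $v_\eta$. Since $\phi(v_\eta)$ is such a Whittaker vector, it is a nonzero scalar multiple of that image, hence $J(\k,c)\phi(v_\eta)=0$, giving $W(\eta;\k,c)\subseteq\ker\phi=N$. This is exactly how the paper closes the argument; once you insert this step, your appeal to Theorem \ref{non-critical} (non-critical case) and Theorem \ref{critical} (critical case) finishes the proof correctly. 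Your ``cleaner route'' via $I(N)$ and Proposition \ref{submodule} is fine in the non-critical case, but in the critical case the passage from ``$I(N)$ is critical'' to ``$J(\k,c)\subseteq I(N)$ for some critical $(\k,c)$'' is not justified and would again need the Whittaker-vector uniqueness to be made rigorous.
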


\begin{proof} Let $N$ be a maximal submodule of $M(\eta)$ and $I=I(N)$. 
Obviously, $v_\eta\notin N$ and $\bar v_\eta\in M(\eta)/N$, the image of $v_\eta\in M(\eta)$, 
is a nonzero Whittaker vector of type $\eta$.

By Theorem \ref{simple}, there exists a $U$-module isomorphism $\phi: M(\eta)/N\rightarrow V$, 
where $V=V(\eta;\k,c)$ for some non-critical pair $(\k,c)$ 
or $V=V(\eta;\k,c)/\tildeW_-(\eta;\k,c)$ for some critical pair $(\k,c)$.

Note that $\phi(\bar v_\eta)$ is a Whittaker vector of type $(\a,0)$ in $V$,
and hence a scalar multiple of the image of $v_\eta$ in $V$ by Corollary \ref{vector'}.
Hence $J(\k,c)\phi(\bar v_\eta)=0$ and also $J(\k,c)\bar v_\eta=0$. 
We have $W(\eta;\k,c)\subseteq N$ and the results follow from Theorem \ref{non-critical}
and Theorem \ref{critical}.
%
%
%If $V=V(\eta;\k,c)$, then $\phi(\bar v_\eta)$ is a 
%nonzero multiple of the image of $v_\eta$ in $V(\eta;\k,c)$ by Theorem \ref{vector'}.
%Hence $J(\k,c)\phi(\bar v_\eta)=0$ and also $J(\k,c)\bar v_\eta=0$. 
%We have $W(\eta;\k,c)\subseteq I(N)$ and the equality holds since $W(\eta;\k,c)$ is maximal in this case.
%
%If $V=V(\eta;\k,c)/\hatW_-(\eta;\k,c)$.
%
\end{proof}

\begin{remark}
Note that the above results are independent of $\eta$, that is, maximal submodules of a universal Whittaker module and irreducible Whittaker modules with respect to any Whittaker function can be described in the same way.
\end{remark}

\section*{Acknowledgement}
X.G. is partially supported by the NSF of China (Grant 11971440).
L.X. is partially supported by the NSF of China (Nos. 11871249, 11771142) and the Jiangsu Natural Science Foundation (No. BK20171294). The authors would like to thank Prof. Jianrong Li for valuable discussions 
at the beginning of the paper.

\end{document}